\newtheorem*{rep@theorem}{\rep@title}
\newcommand{\newreptheorem}[2]{
\newenvironment{rep#1}[1]{
 \def\rep@title{#2 \ref{##1}}
 \begin{rep@theorem}}
 {\end{rep@theorem}}}
\theoremstyle{plain}
\newtheorem{thm}{Theorem}[section]
\newtheorem{lem}[thm]{Lemma}
\newtheorem{conjecture}[thm]{Conjecture}
\newtheorem{cor}[thm]{Corollary}
\newtheorem*{SmallPotLemma}{Small Pot Lemma}
\newtheorem*{ClassificationOfd0}{Classification of $d_0$-choosable graphs}
\theoremstyle{definition}
\newtheorem{defn}{Definition}
\theoremstyle{remark}
\newcommand{\fancy}[1]{\mathcal{#1}}
\newcommand{\C}[1]{\fancy{C}_{#1}}
\newcommand{\IN}{\mathbb{N}}
\newcommand{\surj}{\twoheadrightarrow}
\newcommand{\set}[1]{\left\{ #1 \right\}}
\newcommand{\setb}[3]{\left\{ #1 \in #2 \mid #3 \right\}}
\newcommand{\setbs}[2]{\left\{ #1 \mid #2 \right\}}
\newcommand{\card}[1]{\left|#1\right|}
\newcommand{\floor}[1]{\left\lfloor#1\right\rfloor}
\newcommand{\func}[3]{#1\colon #2 \rightarrow #3}
\newcommand{\funcsurj}[3]{#1\colon #2 \surj #3}
\newcommand{\djunion}[2]{#1 \mbox{\hspace{2 pt}$+$\hspace{2 pt}} #2}
\newcommand{\parens}[1]{\left( #1 \right)}
\newcommand{\brackets}[1]{\left[ #1 \right]}
\newcommand{\DefinedAs}{\mathrel{\mathop:}=}
\newcommand{\join}[2]{#1 \mbox{\hspace{2 pt}$\vee$\hspace{2 pt}} #2}
\title{Coloring a graph with $\Delta-1$ colors: Conjectures equivalent to the
Borodin-Kostochka conjecture that appear weaker} 
\author{Daniel W. Cranston\thanks{Department of Mathematics and Applied Mathematics, Virginia Commonwealth University, Richmond, VA, 23284. email: \texttt{dcranston@vcu.edu}} \and Landon Rabern\thanks{School of Mathematical and Statistical Sciences, Arizona
State University, Tempe, AZ 85287.  email: \texttt{landon.rabern@gmail.com}. 
}
}
\begin{document}
\maketitle
\begin{abstract}
Borodin and Kostochka conjectured that every graph $G$ with maximum degree
${\Delta\ge9}$ satisfies $\chi\le \max\set{\omega, \Delta-1}$.
We carry out an in-depth study of minimum counterexamples to the
Borodin-Kostochka conjecture.  Our main tool is the identification of graph joins that are
$f$-choosable, where $f(v) \DefinedAs d(v) - 1$ for each vertex $v$.
Such a join cannot be an induced subgraph of a vertex critical graph with
$\chi = \Delta$, so we have a wealth of structural information about minimum
counterexamples to the Borodin-Kostochka conjecture.  

Our main result proves that certain conjectures that are prima facie weaker
than the Borodin-Kostochka conjecture are in fact equivalent to it.  One such
equivalent conjecture is the following:
Any graph with $\chi = \Delta = 9$ contains $\join{K_3}{\overline{K_6}}$ as a
subgraph.
\end{abstract}

\section{Introduction}
\subsection{A short history of the problem}
We begin by briefly recounting the history of upper bounds on a graph's
chromatic number $\chi$ in terms of its maximum degree $\Delta$ and maximum
clique size $\omega$\footnote{Technically, the chromatic number, maximum degree,
and maximum clique size are parameters of a graph $G$; they are often denoted
$\chi(G)$, $\Delta(G)$, and $\omega(G)$.  However, when the graph $G$ is clear
from context, for brevity we simply write $\chi$, $\Delta$, and $\omega$.}.
The first non-trivial result about coloring graphs with around $\Delta$ colors
is Brooks' Theorem, from 1941. \begin{thm}[Brooks \cite{brooks1941colouring}]
Every graph with $\Delta \geq 3$ satisfies $\chi \leq \max\{\omega, \Delta\}$.
\end{thm}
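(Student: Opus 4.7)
The plan is to prove Brooks' theorem by a minimum counterexample argument, establishing the contrapositive: if $\Delta(G) = \Delta \geq 3$ and $\chi(G) > \Delta$, then $G$ contains $K_{\Delta+1}$ (so $\omega(G) \geq \Delta + 1$). Throughout, let $G$ denote a vertex-minimum counterexample.

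First I would carry out the standard structural reductions. Minimality forces $G$ to be connected (else a component is a smaller counterexample); $\Delta$-regular (else delete a vertex $v$ with $d(v) < \Delta$, apply minimality to $G-v$ to obtain a $\Delta$-coloring, and extend greedily to $v$, which sees at most $\Delta-1$ colors); and $2$-connected (else split at a cut vertex $z$ into induced subgraphs $G_1, G_2$ sharing only $z$, note that each has $\omega(G_i) \leq \Delta$ lest $G$ contain $K_{\Delta+1}$, apply minimality to each $G_i$ since $z$ has reduced degree there, and permute colors to align at $z$). In each reduction one must also handle the edge case where the reduced graph has $\Delta < 3$, but then it is a union of paths and cycles and so is $3$-colorable.

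The main step is Lovász's vertex-ordering lemma: since $G$ is $2$-connected, non-complete, and $\Delta \geq 3$, there exist vertices $u, v_1, v_2$ with $uv_1, uv_2 \in E(G)$, $v_1 v_2 \notin E(G)$, and $G - \{v_1, v_2\}$ connected. Granted this, I would run BFS from $u$ in $G - \{v_1, v_2\}$ and reverse the order to obtain a list $w_1, \ldots, w_{n-2} = u$ in which every $w_i$ with $i < n-2$ has a later neighbor. Color $v_1$ and $v_2$ both with color $1$, then greedily color the $w_i$'s in order: each $w_i$ with $i < n-2$ has at most $\Delta - 1$ already-colored neighbors, and $u$'s neighbors $v_1, v_2$ share a color, so $u$ sees at most $\Delta - 1$ colors on its $\Delta$ neighbors. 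The result is a proper $\Delta$-coloring of $G$, contradicting $\chi(G) > \Delta$.

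The main obstacle is the lemma itself. If $G$ is $3$-connected, one takes any induced path $v_1 u v_2$ of length $2$ (which exists since $G$ is connected, non-complete); by $3$-connectivity, $G - \{v_1, v_2\}$ remains connected. The delicate case is when $G$ has connectivity exactly $2$: here I would fix a $2$-cut $\{x, y\}$ and work inside an endblock $B$ of the structure obtained after splitting along $\{x, y\}$, choosing $u \in B$ whose two appropriately selected neighbors $v_1, v_2$ are non-adjacent (forced by $\Delta \geq 3$ together with $B$ failing to be a complete subgraph) and whose removal preserves global connectivity through $x$ and $y$. This subcase analysis is where the $\Delta \geq 3$ hypothesis is essential, since odd cycles witness failure at $\Delta = 2$.
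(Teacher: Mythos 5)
The paper states Brooks' Theorem only as a cited classical result and gives no proof of it, so there is nothing internal to compare against; your proposal is the standard Lov\'asz-style argument, and its overall architecture is correct: reduce a minimum counterexample to a connected, $\Delta$-regular, $2$-connected, non-complete graph, find $u$ with nonadjacent neighbors $v_1,v_2$ such that $G-\{v_1,v_2\}$ is connected, give $v_1,v_2$ a common color, and finish greedily along a reversed BFS order ending at $u$. The reductions, the edge cases with $\Delta<3$ after deletion, and the greedy step are all handled correctly. The one soft spot is your sketch of the key lemma when $\kappa(G)=2$: you propose to take $u$ inside an end-block of the decomposition along a $2$-cut, but the standard (and cleanest) choice is the reverse --- take $u$ to be a vertex of a $2$-cut itself, so that $G-u$ is separable with at least two end-blocks, and then choose $v_1,v_2$ to be neighbors of $u$ lying in distinct end-blocks of $G-u$ and not equal to the cut vertices of $G-u$ (such neighbors exist by $2$-connectivity of $G$). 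Then $v_1v_2\notin E(G)$ automatically, $G-u-v_1-v_2$ stays connected because only non-cut vertices of end-blocks were removed, and $d(u)=\Delta\ge 3$ supplies a third neighbor reattaching $u$. As written, your placement of $u$ would require extra work to guarantee both the nonadjacency of $v_1,v_2$ and the connectivity of $G-\{v_1,v_2\}$; with that choice of $u$ corrected, the proof goes through.
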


In 1977, Borodin and Kostochka conjectured that a similar result holds for
$(\Delta - 1)$-colorings.  

\begin{conjecture}[Borodin and Kostochka \cite{borodin1977upper}]
Every graph with $\Delta \geq 9$ satisfies $\chi \leq \max\{\omega, \Delta - 1\}$.
\end{conjecture}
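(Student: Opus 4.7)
The plan is to argue by contradiction and analyze a minimum counterexample. Suppose the conjecture fails, and let $G$ be a vertex-critical graph of smallest order with $\Delta(G) \geq 9$, $\chi(G) \geq \Delta(G)$, and $\omega(G) \leq \Delta(G) - 1$. By vertex criticality, $G - v$ is $(\Delta - 1)$-colorable for every $v$, and more strongly, every proper induced subgraph admits a proper list coloring from any assignment with $|L(u)| \geq d_{G-S}(u)$ on the remaining vertices.

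The core engine is the technique advertised in the abstract: identifying joins $\join{H_1}{H_2}$ that are $f$-choosable with $f(v) = d(v) - 1$. Any such join is a forbidden induced subgraph of $G$, because a partial coloring of the complement extends. First, I would assemble a library of such forbidden joins, starting with small cases (e.g.\ $\join{K_r}{\overline{K_s}}$ for ranges of $r, s$ that line up with $\Delta = 9, 10, 11, \dots$), and then promote them into global constraints on the neighborhood structure of every vertex of $G$.

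Second, I would pin down the clique number. A Reed-type inequality of the form $\chi \leq \lceil(\Delta + 1 + \omega)/2\rceil$ forces $\omega(G) \geq \Delta - 2$ in any counterexample, while each large clique $K$ together with its neighborhood is a candidate for one of the forbidden joins, pushing $\omega(G)$ back down. The hope is that these two bounds collide and leave only a handful of pairs $(\Delta, \omega)$ to handle by hand.

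Third, with $\omega(G)$ restricted to a short window near $\Delta - 1$, I would run a case analysis in which each case produces an explicit induced $\join{H_1}{H_2}$ in $G$ that is on the forbidden list, contradicting criticality. The main obstacle is precisely this last step: the library of known $f$-choosable joins is thin, and the extremal configurations at $\Delta = 9$ (the graphs in Figures \ref{fig:M_61}, \ref{fig:M_7}, \ref{fig:M_72}, \ref{fig:M_8}) sit right at the boundary, so the hard work is enlarging the library enough to rule out every structure that is not one of these small examples. This is where the Borodin--Kostochka conjecture has resisted proof, and a realistic outcome of the plan is to shrink the remaining obstruction rather than eliminate it outright.
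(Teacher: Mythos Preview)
The statement you are attempting to prove is the Borodin--Kostochka \emph{Conjecture}; it is open, and the paper does not prove it. There is no ``paper's own proof'' to compare your attempt against. The paper's contribution is to show that certain apparently weaker statements (e.g.\ Conjecture~\ref{K3Conjecture}) are \emph{equivalent} to Borodin--Kostochka, not to settle the conjecture itself. You seem to recognize this in your final paragraph, where you concede that ``a realistic outcome of the plan is to shrink the remaining obstruction rather than eliminate it outright''---but then what you have written is a research programme, not a proof.

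Beyond that framing issue, your sketch contains two concrete gaps. First, you invoke a ``Reed-type inequality of the form $\chi \leq \lceil(\Delta + 1 + \omega)/2\rceil$'' to force $\omega(G) \geq \Delta - 2$; that inequality is Reed's Conjecture, which is also open, so you cannot use it as a lemma. Second, the graphs in Figures~\ref{fig:M_61}--\ref{fig:M_8} are not extremal configurations at $\Delta = 9$; they have $\Delta \in \{6,7,8\}$ and witness that the hypothesis $\Delta \geq 9$ cannot be lowered. At $\Delta = 9$ no counterexamples are known---that is precisely the content of the conjecture---so there are no ``small examples'' to rule out in the way you describe.
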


Constructions exist showing that the condition $\Delta \geq 9$ is
tight (see Figures \ref{fig:M_61}, \ref{fig:M_71},
\ref{fig:M_72} and \ref{fig:M_8}, starting in Section~\ref{excludingInMules}).
In the same paper where they posed the conjecture, Borodin and Kostochka proved
the following weakening. The proof is simple and uses a decomposition lemma of Lov\'{a}sz from the 1960s \cite{lovasz1966decomposition}.

\begin{thm}[Borodin and Kostochka \cite{borodin1977upper}]\label{BorodinKostochkaBK}
Every graph with $\chi = \Delta \geq 7$ contains $K_{\floor{\frac{\Delta + 1}{2}}}$.
\end{thm}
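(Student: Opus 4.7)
The plan is to use Lovász's decomposition lemma together with Brooks' theorem. Recall that Lovász's lemma (\cite{lovasz1966decomposition}) says that if $d_1,\dots,d_k$ are non-negative integers with $d_1+\cdots+d_k\ge \Delta(G)-k+1$, then $V(G)$ admits a partition $V_1\sqcup\cdots\sqcup V_k$ such that $\Delta(G[V_i])\le d_i$ for each $i$.

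Let $G$ satisfy $\chi\ge \Delta\ge 7$. I would apply the lemma with $k=2$, choosing $d_1=\floor{\frac{\Delta-1}{2}}$ and $d_2=\ceil{\frac{\Delta-1}{2}}$, so that $d_1+d_2=\Delta-1$ and the hypothesis of the lemma is met. Since $\Delta\ge 7$, both $d_1,d_2\ge 3$. Let $G_1,G_2$ be the two induced subgraphs obtained.

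Now I would apply Brooks' theorem to each $G_i$. Because $\Delta(G_i)\le d_i$ and $d_i\ge 3$, Brooks' theorem says $\chi(G_i)\le d_i$ \emph{unless} some connected component of $G_i$ is isomorphic to $K_{d_i+1}$ (the odd cycle case is ruled out by $d_i\ge 3$). Suppose, toward a contradiction, that no component of either $G_i$ is a $K_{d_i+1}$. Then proper colorings of $G_1$ and $G_2$ using disjoint palettes of sizes $d_1$ and $d_2$ combine to a proper coloring of $G$ with $d_1+d_2=\Delta-1$ colors, contradicting $\chi\ge \Delta$.

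Therefore some $G_i$ contains $K_{d_i+1}$ as a (component, hence) subgraph, and so $G$ contains $K_{d_i+1}$ as a subgraph. Since $d_1+1=\floor{\frac{\Delta+1}{2}}$ is the smaller of the two clique sizes, in either case $G$ contains $K_{\floor{\frac{\Delta+1}{2}}}$, which is what we wanted. The main obstacle is essentially just invoking the right statement of Lovász's lemma; once the decomposition is set up with $d_1+d_2=\Delta-1$ and $\min(d_1,d_2)\ge 3$, Brooks' theorem finishes the argument immediately.
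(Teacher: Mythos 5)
Your proof is correct and is precisely the argument the paper alludes to: it cites Lov\'{a}sz's decomposition lemma and notes the proof is "quite simple" from there, without writing out the details. Your choice $d_1=\floor{\frac{\Delta-1}{2}}$, $d_2=\ceil{\frac{\Delta-1}{2}}$ correctly uses $\Delta\ge 7$ to force $\min(d_1,d_2)\ge 3$ (ruling out the odd-cycle case of Brooks), and the arithmetic $d_1+1=\floor{\frac{\Delta+1}{2}}$ checks out.
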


In the 1980s, Kostochka proved the following using a complicated recoloring
argument together with a technique for reducing $\Delta$ in a counterexample
based on hitting every maximum clique with an independent set (which we explain
following Lemma~\ref{HittingMaxCliques}).

\begin{thm}[Kostochka \cite{kostochkaRussian}]\label{KostochkaBK}
Every graph with $\chi = \Delta$ contains $K_{\Delta - 28}$.
\end{thm}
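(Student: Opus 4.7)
The plan is to study a minimum counterexample and reduce $\Delta$ via carefully chosen independent sets. Let $G$ be a graph with $\chi(G) \geq \Delta(G) = \Delta$ and no $K_{\Delta - 28}$; choose $G$ minimum in an appropriate sense, for instance lexicographically minimizing the pair $(\Delta, |V(G)|)$. Theorem~\ref{BorodinKostochkaBK} already hands us a clique $K_q$ of size $q = \floor{\frac{\Delta + 1}{2}}$ inside $G$, so the real task is to bootstrap this from roughly $\Delta/2$ up to size $\Delta - 28$.

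The main reduction I would use is the following. Seek an independent set $I \subseteq V(G)$ that (i) meets every vertex of degree exactly $\Delta$ and (ii) meets every clique of $G$ of size close to $\Delta$. If such $I$ exists, then $G' \DefinedAs G - I$ satisfies $\Delta(G') \leq \Delta - 1$ and $\chi(G') \geq \chi(G) - 1 \geq \Delta - 1 \geq \Delta(G')$, so the hypothesis persists in $G'$ with a strictly smaller $\Delta$. Induction on $\Delta$---with base case handled for small $\Delta$, where the conclusion is trivial---then drives the argument down to a graph where the maximum degree is low enough that Brooks' theorem or direct inspection yields the required clique, and the cliques produced at deeper levels of the recursion pull back through the removed independent sets to a $K_{\Delta - 28}$ in the original $G$.

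The technical heart of the argument, and the content of Kostochka's complicated recoloring, is showing that such an independent set $I$ actually exists. I would approach this by fixing a proper $(\chi(G) - 1)$-coloring of $G - v$ for a well-chosen critical vertex $v$ of maximum degree, and then performing Kempe-chain swaps between pairs of color classes to mold one color class into the desired $I$. Each swap must either absorb a missing degree-$\Delta$ vertex into the target class or expel a vertex that obstructs a near-maximum clique, while preserving both independence of the class and properness of the coloring on the rest of $G$. When every sequence of swaps fails, the obstructions themselves impose very tight structural constraints on how maximum cliques sit relative to degree-$\Delta$ vertices, and pushing these constraints produces a $K_{\Delta - 28}$ directly.

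The main obstacle is the Kempe-chain analysis: individual swaps can easily destroy one desirable property while repairing another, so controlling the two hitting conditions simultaneously requires intricate case work on the local neighborhood structure of a maximum clique and its interaction with the nearby degree-$\Delta$ vertices. The large additive constant $28$ appears as the slack needed to absorb incremental clique losses across the recursion and to dispatch the various degenerate configurations that arise in the recoloring. Tightening this constant to $1$ for $\Delta \geq 9$ is exactly the Borodin-Kostochka conjecture, which the remainder of this paper attacks by a different route based on $f$-choosable graph joins.
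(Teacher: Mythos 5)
The paper does not actually prove this theorem; it is quoted from Kostochka's 1980 paper, and the authors only describe the proof as ``a complicated recoloring argument together with a technique for reducing $\Delta$ in a counterexample based on hitting every maximum clique with an independent set.'' Your proposal recapitulates that one-sentence description but does not constitute a proof: the entire mathematical content of the theorem lies in establishing that the independent set $I$ (hitting all near-maximum cliques and dominating all degree-$\Delta$ vertices) exists, or in extracting a large clique when it does not, and you explicitly defer both. Saying that Kempe-chain swaps ``must either absorb a missing degree-$\Delta$ vertex or expel a vertex that obstructs a near-maximum clique'' and that failed swaps ``impose very tight structural constraints'' names the difficulty without resolving it; no construction, no invariant, and no case analysis is given, and the constant $28$ is never derived. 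This is a proof plan, not a proof.

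Two further points in the plan are wrong or incomplete as stated. First, the induction does not work by having cliques ``pull back through the removed independent sets'' to grow into a $K_{\Delta-28}$: removing a maximal independent set $M$ that hits all maximum cliques decreases $\Delta$, $\chi$, and $\omega$ each by one (cf.\ Lemma~\ref{InductingOnC}), so the target clique size drops in lockstep with $\Delta$ and nothing is gained by reinserting $M$; the reduction works by the contrapositive (a counterexample at level $\Delta$ yields one at level $\Delta-1$), reducing to a base case. Second, the hitting lemmas that make this reduction available (Lemma~\ref{HittingMaxCliques} and its predecessors) require $\omega$ to be a large fraction of $\Delta$, e.g.\ $\omega>\frac23(\Delta+1)$, which for cliques of size about $\Delta-29$ forces $\Delta$ to be at least roughly $90$; meanwhile the statement is only trivial for $\Delta\le 30$ or so. The range in between is precisely where the recoloring argument must carry the whole load, so your base case cannot be dispatched by ``Brooks' theorem or direct inspection.'' As written, the proposal has a genuine gap coinciding with the entire substance of Kostochka's theorem.
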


Kostochka \cite{kostochkaRussian} proved the following result which shows that
graphs having clique number sufficiently close to their maximum degree contain
an independent set hitting every maximum clique. In \cite{rabernhitting} the
second author improved the antecedent to $\omega \geq \frac34(\Delta + 1)$. 
Finally, King  \cite{KingHitting} made the result tight.

\begin{lem}[Kostochka \cite{kostochkaRussian}]
If $G$ is a graph with $\omega(G) \geq \Delta + \frac32 - \sqrt{\Delta}$,
then $G$ contains an independent set $I$ such that $\omega(G - I) < \omega(G)$.
\end{lem}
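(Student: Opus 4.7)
The plan is to find $I$ by grouping the maximum cliques of $G$ into clusters that share a large common core of vertices, and then applying an independent transversal theorem to select one vertex per cluster.

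Set $\omega \DefinedAs \omega(G)$ and $k \DefinedAs \Delta - \omega + 1$. The hypothesis $\omega \geq \Delta + \frac{3}{2} - \sqrt{\Delta}$ rearranges to $\sqrt{\Delta} \geq k + \frac{1}{2}$, so $\Delta \geq k^2 + k + 1$ (since both sides are integers), equivalently
\[
\omega \geq k^2 + 2.
\]
This quadratic-in-$k$ lower bound will drive the entire argument.

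\textbf{Step 1 (local degree bound).} For any maximum clique $K$ and any $v \in K$, the vertex $v$ has $\omega - 1$ forced neighbors inside $K$, leaving at most $\Delta - (\omega - 1) = k$ external neighbors. Hence if two maximum cliques $K, K'$ share a vertex $v$, then $K' \setminus K \subseteq N(v) \setminus K$, giving $\card{K \cap K'} \geq \omega - k$.

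\textbf{Step 2 (clusters are pairwise meeting).} Let $H$ be the graph on the family $\fancy{K}$ of maximum cliques whose edges are the meeting pairs, and let $\C{1}, \ldots, \C{t}$ be its components. Using $\omega > 2k$ (an immediate consequence of $\omega \geq k^2 + 2$), I would show that any two cliques in the same component already meet: along a shortest meeting-chain $K = K_0, K_1, \ldots, K_m = K'$, the sets $K \cap K_{m-1}$ and $K_{m-1} \cap K_m$ are two subsets of $K_{m-1}$ each of size at least $\omega - k$, so they meet in at least $\omega - 2k > 0$ vertices, yielding a vertex in both $K$ and $K'$. Each component of $H$ is therefore a family of pairwise-meeting maximum cliques, with every pair intersecting in at least $\omega - k$ vertices.

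\textbf{Step 3 (large common core of each cluster).} Fix a component $\fancy{C}$ of $H$ and a representative $K_0 \in \fancy{C}$. Each $K \in \fancy{C}$ excludes a subset $K_0 \setminus K$ of $K_0$ of size at most $k$. The key quantitative step is to show that the union of these excluded subsets covers at most $\omega - 2k$ vertices of $K_0$, so that the cluster core $T_\fancy{C} \DefinedAs \bigcap_{K \in \fancy{C}} K$ contains at least $2k$ vertices of $K_0$. This is where the full strength of $\omega \geq k^2 + 2$ is needed: it controls the incidence pattern on $K_0$ between excluded vertices $u \in K_0 \setminus K$ and the corresponding external extensions $K \setminus K_0$ (non-neighbors of $u$, by Step 1), ruling out spread-out configurations.

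\textbf{Step 4 (independent transversal via Haxell).} Let $G^*$ be the auxiliary graph on $\bigcup_\fancy{C} T_\fancy{C}$ consisting of only the inter-cluster edges of $G$. For $v \in T_\fancy{C}$, every neighbor of $v$ in another core $T_{\fancy{C}'}$ is external to a maximum clique of $\fancy{C}$ containing $v$, so $v$ has at most $k$ such neighbors; thus $\Delta(G^*) \leq k$. Since $\card{T_\fancy{C}} \geq 2k \geq 2\Delta(G^*)$ for every cluster by Step 3, Haxell's independent transversal theorem produces a selection $v_\fancy{C} \in T_\fancy{C}$ per cluster such that $I \DefinedAs \setbs{v_\fancy{C}}{\fancy{C} \text{ a component of } H}$ is independent in $G$. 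As $I$ meets every maximum clique of $G$, we conclude $\omega(G - I) < \omega(G)$.

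\textbf{Main obstacle.} The crux is Step 3: pairwise intersection of size $\omega - k$ within a component does not formally imply the Helly-type conclusion of a large common core (it only controls triples directly). Obtaining the quantitative bound $\card{T_\fancy{C}} \geq 2k$ needed to drive Haxell's theorem in Step 4 is the delicate heart of the argument, and this is precisely why the full quadratic-in-$\sqrt{\Delta}$ strength of the hypothesis is required rather than some weaker linear bound.
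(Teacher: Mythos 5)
The paper does not actually prove this lemma---it is quoted from Kostochka's paper with a citation---so there is no in-paper argument to compare yours against. Judged on its own, your outline follows the route of the later strengthenings the paper cites right after it (Rabern's $\frac34(\Delta+1)$ and King's $\frac23(\Delta+1)$ bounds): group the maximum cliques into pairwise-intersecting families, show each family has a large common core, and pick an independent transversal of the cores. Your Steps 1, 2 and 4 are sound as stated; in particular $\Delta(G^*)\le k$ is correct, since a core vertex $v$ lies in a maximum clique $K_0$ of its own cluster, has at most $k$ neighbors outside $K_0$, and every neighbor of $v$ in another cluster's core lies outside $K_0$.

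Step 3, however, is a genuine gap, and you have correctly flagged it without closing it. The missing idea is Hajnal's clique collection lemma: for any family $\mathcal{K}$ of pairwise-intersecting maximum cliques, $\card{\bigcap\mathcal{K}}+\card{\bigcup\mathcal{K}}\ge 2\omega$. That is precisely the Helly-type statement you say you lack. Combined with Step 1---each vertex of $\bigcup\mathcal{C}\setminus K_0$ is adjacent to all of some $K\cap K_0$ of size at least $\omega-k$, so $\card{\bigcup\mathcal{C}\setminus K_0}\,(\omega-k)\le \omega k$---it yields $\card{T_\mathcal{C}}\ge \omega-\omega k/(\omega-k)$. Be aware, though, that even with this bound the requirement $\card{T_\mathcal{C}}\ge 2k$ for Haxell's theorem reduces to $\omega\ge(2+\sqrt{2})k$, which your hypothesis $\omega\ge k^2+2$ guarantees only for $k\ge 3$; the cases $k\in\{1,2\}$ need separate (easy but nonvacuous) handling, or the lopsided transversal theorem that King uses. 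As a historical aside, Haxell's theorem postdates Kostochka's lemma by two decades; Kostochka's own argument is a different exchange-type proof, and the Hajnal-plus-transversal route you sketch is the one behind the stronger lemmas of Rabern and King stated immediately after this one.
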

\begin{lem}[Rabern \cite{rabernhitting}]
If $G$ is a graph with $\omega(G) \geq \frac34 (\Delta + 1)$,
then $G$ contains an independent set $I$ such that $\omega(G - I) < \omega(G)$.
\end{lem}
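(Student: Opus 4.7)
The plan is to exploit the fact that $\omega \geq \tfrac{3}{4}(\Delta+1)$ forces any two maximum cliques of $G$ to share most of their vertices, and then to leverage this rigidity to produce an independent set that meets every maximum clique (which is equivalent to the conclusion $\omega(G-I) < \omega(G)$).

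The first step is a quantitative overlap bound. Write $\omega = \omega(G)$, and let $K, K'$ be two distinct maximum cliques. Any $v \in K \cap K'$ is adjacent to every other vertex of $K \cup K'$, so $|K \cup K'| - 1 \le d(v) \le \Delta$, giving
\[
|K \cap K'| \;\ge\; 2\omega - \Delta - 1 \;\ge\; \tfrac{\Delta - 1}{2}
\]
by the hypothesis on $\omega$. Equivalently, $|K \setminus K'| \le \Delta + 1 - \omega \le \tfrac{\Delta+1}{4}$, so the symmetric differences are much smaller than the common part.

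The second step is a non-adjacency observation that converts overlap into usable independent pairs. For distinct maximum cliques $K, K'$, each $a \in K \setminus K'$ is adjacent to all of $K \cap K'$; if $a$ were also adjacent to all of $K' \setminus K$, then $\{a\} \cup K'$ would be a clique of size $\omega + 1$, contradicting maximality. Hence every vertex in $K \setminus K'$ has a non-neighbor in $K' \setminus K$, and symmetrically.

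With these tools I would construct the transversal by a greedy/iterative procedure. Start with the vertex set of one maximum clique $K_0$ and an empty partial transversal $I$; while some maximum clique $K'$ is not yet hit by $I$, use Step 2 to pick a vertex $b \in K' \setminus K_0$ that is a non-neighbor of some vertex already present in the union of unhit cliques, and add it to $I$, removing from consideration all cliques that $b$ newly hits. The invariant to maintain is that $I$ is independent in $G$; this is preserved because the differences $K \setminus K'$ are tiny while the ``non-neighbor budget'' inside each $K' \setminus K$ is large relative to $|I|$.

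The main obstacle will be showing that this procedure never gets stuck: one must rule out the situation in which every candidate vertex in an unhit $K'$ is adjacent to some vertex of $I$. I expect to handle this by a Hall-type defect argument on an auxiliary bipartite graph between $I$ and a chosen unhit clique, using Step 1 to bound $|I|$ and the non-adjacency guarantee from Step 2 to produce the required system of distinct representatives; alternatively, one can argue inductively, showing that either $\bigcap_{K \in \mathcal{K}} K \neq \emptyset$ (in which case any vertex of the common core gives $I = \{v\}$) or $\mathcal{K}$ splits into subfamilies to which induction applies. The overlap bound is exactly strong enough to make whichever variant one chooses go through.
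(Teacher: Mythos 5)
This lemma is quoted from \cite{rabernhitting}; the present paper gives no proof of it, so I can only assess your proposal on its own terms. As written it has two genuine gaps, and they sit exactly where the real content of the result lies. First, your Step 1 controls only \emph{pairwise} intersections of maximum cliques, and only for pairs that already intersect (if $K\cap K'=\emptyset$ there is no vertex $v$ to apply the degree bound to, so the inequality $|K\cap K'|\ge 2\omega-\Delta-1$ simply fails rather than forcing overlap). Pairwise overlap does not give you a common core: three maximum cliques can pairwise share half their vertices while $K_1\cap K_2\cap K_3=\emptyset$. The tool you are missing is Hajnal's clique collection lemma, which says that for any family $\mathcal{K}$ of maximum cliques whose intersection graph is connected, $\bigl|\bigcup\mathcal{K}\bigr|+\bigl|\bigcap\mathcal{K}\bigr|\ge 2\omega$. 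Combined with the observation that any $v\in\bigcap\mathcal{K}$ is adjacent to all of $\bigcup\mathcal{K}\setminus\{v\}$, hence $\bigl|\bigcup\mathcal{K}\bigr|\le\Delta+1$, this gives each component of the clique intersection graph a common core of size at least $2\omega-(\Delta+1)\ge\frac12(\Delta+1)$. Without this you cannot even set up the transversal problem correctly, since your ``unhit cliques'' do not organize themselves around single representative vertices.

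Second, your plan for the transversal itself (``greedy, then a Hall-type defect argument to show it never gets stuck, which I expect to go through'') is precisely the hard part, and at the threshold $\frac34(\Delta+1)$ it does not reduce to a greedy selection or to Hall's theorem. Each core vertex has at most $\Delta+1-\bigl|\bigcup\mathcal{K}\bigr|\le\Delta+1-\omega\le\frac14(\Delta+1)$ neighbors outside its component's union, while each core has size at least $\frac12(\Delta+1)$; the proof then invokes Haxell's independent transversal theorem (parts of size at least $2r$ admit an independent transversal when every vertex has at most $r$ neighbors outside its own part), and the constant $\frac34$ is exactly what makes $\frac12(\Delta+1)=2\cdot\frac14(\Delta+1)$. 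Haxell's theorem is a substantial result, not a routine SDR argument, and a naive greedy really can get stuck here. (A smaller point: your iterative step adds $b$ when it is a non-neighbor of \emph{some} previously chosen vertex, which does not preserve independence of $I$; you need $b$ nonadjacent to \emph{all} of $I$.) So the skeleton is pointed in the right direction, but the two load-bearing ingredients --- Hajnal's lemma and Haxell's theorem --- are both absent.
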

\begin{lem}[King \cite{KingHitting}]\label{HittingMaxCliques}
If $G$ is a graph with $\omega(G) > \frac23 (\Delta + 1)$, then $G$ contains
an independent set $I$ such that $\omega(G - I) < \omega(G)$.
\end{lem}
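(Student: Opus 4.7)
The plan is to combine a structural analysis of how maximum cliques overlap with Haxell's theorem on independent transversals. First, I would prove the overlap lemma: if two maximum cliques $K, K'$ share a vertex $v$, then $N[v] \supseteq K \cup K'$, so $\Delta + 1 \geq |K \cup K'| = 2\omega - |K \cap K'|$, giving $|K \cap K'| \geq 2\omega - \Delta - 1$. The strict hypothesis $\omega > \tfrac{2}{3}(\Delta+1)$ rearranges to $2\omega - \Delta - 1 > \omega/2$, so any two intersecting maximum cliques share strictly more than half their vertices. This makes the relation ``intersects'' transitive on maximum cliques (two subsets of $K'$ each of size $>|K'|/2$ must meet inside $K'$), so the maximum cliques partition into equivalence classes $\mathcal{C}_1, \dots, \mathcal{C}_r$ (clusters) whose vertex unions $V_i = \bigcup_{K \in \mathcal{C}_i} K$ are pairwise disjoint.

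Next, I would invoke the external-degree version of Haxell's theorem: if $V(G)$ is partitioned into parts with every vertex having at most $d$ neighbors outside its own part and each part having size at least $2d$, then an independent transversal exists. Apply this with parts $V_i$, padded by singletons outside $\bigcup V_i$. Any $v \in V_i$ lies in some maximum clique $K \subseteq V_i$ of size $\omega$, so $v$ has at most $\Delta - \omega + 1$ neighbors outside $V_i$, while $|V_i| \geq \omega$. The hypothesis rearranges precisely to $\omega \geq 2(\Delta - \omega + 1)$, matching the Haxell ratio, so we obtain an independent set $I$ meeting each $V_i$.

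It remains to check that $I$ actually hits every maximum clique and not merely one per cluster. For this I would establish a Helly-type property of cluster cores: the intersection $W_i \DefinedAs \bigcap_{K \in \mathcal{C}_i} K$ is nonempty. For three cliques this is immediate from inclusion--exclusion inside one clique using the $>\omega/2$ overlap; for larger subfamilies I would take a minimal $\{K_1,\dots,K_m\} \subseteq \mathcal{C}_i$ with empty common intersection, select $v_j \in \bigcap_{k \neq j} K_k$, observe that the $v_j$ form a clique since each pair lies in a common $K_\ell$, and combine them with vertices of a fixed $K_j$ to produce a clique of size exceeding $\omega$, a contradiction. With $W_i$ nonempty, I would then refine the Haxell application (for instance via local swaps along the $>\omega/2$ overlaps within each cluster) so that the selected vertex in $V_i$ can be slid into $W_i$ while preserving independence; since any vertex of $W_i$ lies in every maximum clique of $\mathcal{C}_i$, the refined $I$ meets every maximum clique, yielding $\omega(G - I) < \omega(G)$. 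The main obstacle is precisely this last refinement: naive Haxell parts of size $|V_i|$ only guarantee one chosen vertex per cluster, and forcing that vertex into the core while respecting independence requires careful use of both the cluster structure and the Helly property. I expect the bulk of the proof's technical work to sit here.
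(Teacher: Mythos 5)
This lemma is quoted from King's paper and is not proved in the text, so there is nothing internal to compare against; judging your outline on its own terms (it is essentially an attempt to reconstruct King's argument), there is a genuine gap at exactly the point you flag. An independent transversal of the cluster unions $V_1,\dots,V_r$ selects one vertex per cluster, and that vertex generally lies in only some of the maximum cliques of its cluster; the proposed repair --- ``local swaps along the $>\omega/2$ overlaps'' to slide each selected vertex into the core $W_i=\bigcap_{K\in\mathcal{C}_i}K$ --- is not an argument. Independence is a global constraint: moving the representative of $V_i$ into $W_i$ can create an edge to the representative of some $V_j$, and nothing in the Haxell machinery lets you re-run the selection with the targets shrunk after the fact. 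The correct move (and King's) is to apply the lopsided independent transversal theorem to the cores themselves: take the parts to be $W_1,\dots,W_r$ inside the induced subgraph $G[\bigcup_i W_i]$ (no padding by singletons --- a singleton part $\{u\}$ with $d(u)>0$ violates the size condition and is unnecessary, since a transversal of the $W_i$ alone already hits every maximum clique). For this you need a quantitative lower bound on $\card{W_i}$, not merely $W_i\neq\emptyset$: Hajnal's clique collection lemma gives $\card{W_i}+\card{V_i}\geq 2\omega$ for a connected collection of maximum cliques, and since each $v\in W_i$ is adjacent to all of $V_i\setminus\{v\}$ and the $V_j$ are pairwise disjoint, $v$ has at most $\Delta+1-\card{V_i}$ neighbors outside $W_i$ in the induced subgraph. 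The inequality $2\omega-\card{V_i}\geq 2(\Delta+1-\card{V_i})$ then reduces, via $\card{V_i}\geq\omega$, to exactly $3\omega\geq 2(\Delta+1)$.

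Your supporting computations are sound --- the overlap bound $\card{K\cap K'}\geq 2\omega-\Delta-1>\omega/2$, the resulting transitivity and disjointness of clusters, and the arithmetic equivalence $\omega>\frac23(\Delta+1)\Leftrightarrow \omega>2(\Delta-\omega+1)$ are all correct and are genuine ingredients of the proof. But the Helly step is also under-justified: pairwise intersections exceeding $\omega/2$ give nonempty triple intersections by inclusion--exclusion, yet this does not iterate to larger subfamilies, and your minimal-counterexample sketch stops short of actually exhibiting a clique of size exceeding $\omega$. What you really need is the full Hajnal inequality $\card{\bigcap\mathcal{K}}+\card{\bigcup\mathcal{K}}\geq 2\omega$, proved by adding cliques to the collection one at a time; once you have that, both the nonemptiness and the size bound on $W_i$ come for free, and the last (and hardest) step of your outline disappears entirely.
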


If $G$ is a vertex critical graph with $\omega > \frac23 (\Delta + 1)$ and
we expand the independent set $I$ guaranteed by Lemma \ref{HittingMaxCliques}
to a
maximal independent set $M$ and remove $M$ from $G$, we see that $\Delta(G-M)
\leq \Delta(G) - 1$ and $\omega(G - M) = \omega(G) - 1$ and $\chi(G-M) =
\chi(G) - 1$; here $\chi(G-M)<\chi(G)$ since $G$ is vertex critical and
$\chi(G-M)\ge \chi(G)-1$ since $M$ is independent.
Using this approach, the proof of many coloring results can be reduced to the
case of the smallest $\Delta$ for which they hold.  In the case of graphs with
$\chi = \Delta$, we get the general result in Lemma~\ref{InductingOnC}.
First we need a definition and some notation.

\begin{defn}
For $k, j \in \IN$, let $\C{k, j}$ be the collection of all vertex critical
graphs with $\chi = \Delta = k$ and $\omega < k - j$.  Put $\C{k} \DefinedAs \C{k, 0}$. Note that $\C{k, j} \subseteq \C{k, i}$ for $j \geq i$.
\end{defn}

For each $k\ge 9$, the set $\C{k}$ is precisely the set of counterexamples to
the Borodin--Kostochka conjecture with $\Delta=k$.
For the following lemma, we need more notation.  We write $H \unlhd G$ if $H$ is
an induced subgraph of $G$, and $H \lhd G$ if $H$ is a \emph{proper} induced subgraph.

\begin{lem}\label{InductingOnC}
Fix $k, j \in \IN$ with $k \geq 3j + 6$.  If $G \in \C{k, j}$, then there exists $H \in \C{k-1, j}$
such that $H \lhd G$. 
\end{lem}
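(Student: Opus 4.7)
The plan is to mimic the general recipe sketched right before the lemma: pick a suitable maximal independent set $M$, pass to $G-M$, and extract a critical subgraph. The notation $H \lhd G$ means (induced) subgraph, so the witness $H$ will be a color-critical induced subgraph of $G-M$. The task amounts to choosing $M$ so that $G-M$ has $\chi = \Delta = k-1$ and $\omega < k-1-j$.

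First I would split on the size of $\omega(G)$. Since $G \in \C{k,j}$ we know $\omega(G) \leq k-1-j$. If $\omega(G) \leq k-2-j$, any maximal independent set $M$ works because $\omega(G-M) \leq \omega(G) \leq k-2-j$ automatically. The interesting case is $\omega(G) = k-1-j$, where I need $M$ to hit every maximum clique so that $\omega(G-M) \leq k-2-j$. This is exactly where the hypothesis $k \geq 3j+6$ is designed to pay off: it rearranges to $k-1-j > \tfrac{2}{3}(k+1)$, so $\omega(G) > \tfrac{2}{3}(\Delta(G)+1)$ and Lemma~\ref{HittingMaxCliques} (King) produces an independent set $I$ hitting every maximum clique. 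Extending $I$ to a maximal independent set $M$ yields $\omega(G-M) \leq k-2-j$ while preserving maximality.

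Next I would verify that $G-M$ has exactly the parameters needed. Since $M$ is a maximal independent set, every remaining vertex has lost at least one neighbor, so $\Delta(G-M) \leq k-1$. Since $M$ is an independent set in $G$, removing it drops $\chi$ by at most $1$, giving $\chi(G-M) \geq k-1$. Combining $\omega(G-M) \leq k-2-j < k-1$ with Brooks' Theorem (whose hypothesis $\Delta \geq 3$ is satisfied because $k-1 \geq 5$) forces $\chi(G-M) \leq \max\{\omega(G-M), \Delta(G-M)\} \leq k-1$, so $\chi(G-M) = k-1$, and the same inequality forces $\Delta(G-M) = k-1$ (otherwise we would get $\chi(G-M) \leq k-2$).

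Finally, let $H$ be a vertex-critical subgraph of $G-M$ with $\chi(H)=k-1$. Since $\omega(H)\leq\omega(G-M)\leq k-2-j$ and $\omega(H)<\chi(H)$, a second application of Brooks to $H$ gives $\Delta(H)=k-1$, and by construction $H$ is an induced subgraph of $G$, so $H \in \C{k-1,j}$ and $H \lhd G$.

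The only real obstacle is the edge case $\omega(G)=k-1-j$, and the inequality $k \geq 3j+6$ is precisely calibrated so that the Kostochka/King hitting lemma applies there. Everything else is routine bookkeeping with Brooks' Theorem and the definition of $\C{k,j}$.
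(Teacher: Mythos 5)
Your proof is correct and follows essentially the same route as the paper: split on whether $\omega(G)=k-1-j$, use $k\ge 3j+6$ to invoke King's hitting lemma (Lemma~\ref{HittingMaxCliques}) in that case, extend to a maximal independent set, and then use Brooks' Theorem to pin down $\chi(G-M)=\Delta(G-M)=k-1$ before extracting a $(k-1)$-critical induced subgraph. The only cosmetic difference is that you spell out the Brooks bookkeeping in slightly more detail than the paper does.
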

\begin{proof}
Let $G \in \C{k, j}$. We first show that there exists a maximal independent
set $M$ such that  $\omega(G - M) < k - (j + 1)$.   If $\omega(G)
< k - (j + 1)$, then any maximal independent set will do for $M$.
Otherwise, $\omega(G) = k - (j + 1)$.  Since $k \geq 3j + 6$, we have
$\omega(G) = k - (j + 1) \ge 2j+5 > \frac23(k + 1) = \frac23(\Delta(G) + 1)$. 
Thus by Lemma \ref{HittingMaxCliques}, we have an independent set $I$ such
that $\omega(G - I) < \omega(G)$.  Expand $I$ to a maximal independent set
to get $M$.

Now $\chi(G - M) = k - 1 = \Delta(G - M)$, where the last equality
follows from Brooks' Theorem. Further, $\omega(G - M) < k - (j + 1) \leq k - 1$.  Since $\omega(G - M) < k - (j + 1)$, for any $(k - 1)$-critical induced subgraph $H \unlhd G - M$ we have $H \in \C{k - 1, j}$.
\end{proof}

As a consequence we get the following result of
Kostochka~\cite{kostochkaRussian}, and also of Catlin~\cite{CatlinDiss}, that
the Borodin--Kostochka conjecture can be reduced to the case when $\Delta=k = 9$.

\begin{lem}[Kostochka~\cite{kostochkaRussian}, Catlin~\cite{CatlinDiss}]
\label{HereditaryReduction}
Let $\fancy{H}$ be a hereditary class of graphs (closed under deleting
vertices).  For $k \geq 5$, if $\fancy{H} \cap \C{k} = \emptyset$, then
$\fancy{H} \cap \C{k+1} = \emptyset$.  In particular, to prove the
Borodin--Kostochka conjecture it is enough to show that $\C{9} = \emptyset$.
\end{lem}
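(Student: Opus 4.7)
The plan is to derive this as an almost immediate consequence of Lemma \ref{InductingOnC} specialized to $j=0$, combined with the definition of a hereditary property. Since $\C{k} = \C{k,0}$, the hypothesis $k \geq 5$ is exactly what is needed so that Lemma \ref{InductingOnC} applies with parameters $(k+1,0)$ (which requires $k+1 \geq 3\cdot 0 + 6 = 6$).

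For the first statement, I would argue by contrapositive. Suppose $\fancy{H} \cap \C{k+1} \neq \emptyset$, and pick some $G \in \fancy{H} \cap \C{k+1}$. Applying Lemma \ref{InductingOnC} with parameters $(k+1, 0)$, which is valid because $k \geq 5$, yields a graph $H \in \C{k,0} = \C{k}$ with $H \lhd G$, i.e., $H$ is an induced subgraph of $G$. Since $\fancy{H}$ is a hereditary property and $G \in \fancy{H}$, we conclude $H \in \fancy{H}$. Hence $H \in \fancy{H} \cap \C{k}$, contradicting the hypothesis $\fancy{H} \cap \C{k} = \emptyset$.

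For the ``in particular'' clause, take $\fancy{H}$ to be the (trivially hereditary) property of being any graph. Assuming $\C{9} = \emptyset$, a straightforward induction on $k$, using the first part repeatedly (legitimate since $k \geq 9 \geq 5$ at each step), yields $\C{k} = \emptyset$ for all $k \geq 9$. By the remark immediately preceding the definition of $\C{k,j}$, this is precisely the statement that the Borodin-Kostochka conjecture holds.

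There is essentially no obstacle here beyond invoking Lemma \ref{InductingOnC} correctly; the only subtle point is lining up the numerical threshold ($k \geq 5$ in the present lemma versus $k \geq 3j+6$ in the previous one), which matches exactly when $j=0$ after shifting $k \mapsto k+1$.
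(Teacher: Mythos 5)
Your proof is correct and follows exactly the route the paper intends: the paper states this lemma as an immediate consequence of Lemma \ref{InductingOnC} (with $j=0$), and your contrapositive argument using heredity, together with the induction for the ``in particular'' clause, is precisely that derivation with the details filled in.
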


A little while after Kostochka proved his bound, Mozhan \cite{mozhan1983} proved
the following using a different technique.

\begin{thm}[Mozhan \cite{mozhan1983}]\label{MozhanTwoThirdsBK}
Every graph with $\chi = \Delta \geq 10$ contains $K_{\floor{\frac{2\Delta + 1}{3}}}$.
\end{thm}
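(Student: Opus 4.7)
The plan is to argue by contradiction: suppose $G$ is vertex-critical with $\chi(G) = \Delta(G) = k \geq 10$ and $\omega(G) < m \DefinedAs \floor{(2k+1)/3}$. Set $d_1 \DefinedAs \floor{(k-1)/3}$ and $d_2 \DefinedAs k - 1 - d_1$, so that $d_1 + d_2 = k - 1$ and (checking each residue class of $k$ modulo $3$) $d_2 + 1 \geq m$. Note that $d_1 \geq 3$ for $k \geq 10$, so Brooks' theorem will only force $K_{d_i+1}$ obstructions below, not odd-cycle ones.

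The first step is Lov\'asz's decomposition lemma (the same tool behind Theorem~\ref{BorodinKostochkaBK}): since $d_1 + d_2 = \Delta(G) - 1$, there is a vertex partition $V(G) = V_1 \sqcup V_2$ with $\Delta(G[V_i]) \leq d_i$. Applying Brooks' theorem to each component of $G[V_i]$ gives $\chi(G[V_i]) \leq d_i$ unless some component is a $K_{d_i + 1}$. If neither part contains such a Brooks-bad component, the disjoint union of the two colorings yields $\chi(G) \leq d_1 + d_2 = k - 1$, contradicting $\chi(G) = k$. If $G[V_2]$ contains a $K_{d_2 + 1}$, then $\omega(G) \geq d_2 + 1 \geq m$, again a contradiction.

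The remaining case -- and the heart of the argument -- is the \emph{unbalanced} one, where $G[V_2]$ is properly $d_2$-colorable but $G[V_1]$ contains a $K_{d_1 + 1}$ component $K$. Now the combined coloring uses $k$ colors, so no contradiction is immediate. My plan is to choose $(V_1, V_2)$ extremally -- for example, minimizing the number of Brooks-bad components in $G[V_1]$, with ties broken by $\size{G[V_1]}$ -- and then run a Mozhan-style Kempe exchange: for a well-chosen $v \in K$, compare the at most $k - d_1 = d_2 + 1$ neighbors of $v$ outside $K$ against the $d_2$ color classes of a fixed $d_2$-coloring of $G[V_2]$. A pigeonhole-driven Kempe chain starting at $v$ should either relocate $v$ into $V_2$, strictly improving the partition and contradicting extremality, or terminate on a $K_{d_2 + 1}$-shaped obstruction inside $V_2$, giving the desired clique. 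The technical obstacle I expect to wrestle with is cascading: inserting $v$ into $V_2$ can push a neighbor's degree past $d_2$, triggering further repair swaps in $V_1$; controlling this recursion so that it terminates productively rather than cycles -- and really does either beat the extremal partition or expose a $K_{d_2+1}$ -- is presumably where the hypothesis $\Delta \geq 10$ is consumed.
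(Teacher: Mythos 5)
First, note that the paper does not prove this theorem; it is quoted from Mozhan's paper, so there is no in-text proof to compare against. Judged on its own terms, your setup is sound: the arithmetic checks out ($d_1+d_2=k-1$, $d_2+1\ge\floor{(2k+1)/3}$ in every residue class, $d_1\ge 3$ so Brooks gives only clique obstructions), the reduction to a vertex-critical graph with $\chi=\Delta=k$ is legitimate, Lov\'asz's decomposition applies, and the two easy cases (both parts Brooks-colorable; a $K_{d_2+1}$ component in $G[V_2]$) are handled correctly. This is exactly how one recovers Theorem \ref{BorodinKostochkaBK} with a balanced split, and the unbalanced split is the right idea for pushing to $2\Delta/3$.

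However, the third case --- a $K_{d_1+1}$ component $K$ in $G[V_1]$ --- is the entire content of Mozhan's theorem, and your treatment of it is a plan rather than an argument ("should either relocate $v$ \dots or terminate on a $K_{d_2+1}$-shaped obstruction", "presumably where the hypothesis $\Delta\ge 10$ is consumed"). Two concrete problems: (i) moving $v\in K$ into $V_2$ does not improve any of the extremal quantities you propose. Since $v$ has exactly $d_1$ neighbors in $V_1$ and up to $k-d_1=d_2+1$ in $V_2$, the standard Lov\'asz weight $d_2\,e(G[V_1])+d_1\,e(G[V_2])$ does not decrease under the move, and the move can create a vertex of degree $d_2+1$ in $G[V_2]$, destroying the partition property; your tie-breaking rule (fewest bad components, then $\size{G[V_1]}$) is not obviously compatible with maintaining $\Delta(G[V_i])\le d_i$ at all. (ii) The claim that the resulting cascade of swaps "terminates productively" --- i.e., cannot cycle and, when stuck, exposes a $K_{d_2+1}$ rather than some other configuration --- is precisely the theorem, and is where Mozhan's actual proof spends all of its effort (tracking a sequence of vertex exchanges between the parts, showing each displaced vertex lands in a clique of the appropriate size, and using $\Delta\ge 10$ to rule out degenerate small cases). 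As it stands the proposal establishes the two easy cases and correctly identifies, but does not close, the hard one.
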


In his dissertation Mozhan improved on this result.  We don't know the
method of proof as we were unable to obtain a copy of his dissertation. 
However, we suspect the method is a more complicated version of the proof of
Theorem \ref{MozhanTwoThirdsBK}.

\begin{thm}[Mozhan]\label{MozhanBK}
Every graph with $\chi = \Delta \geq 31$ contains $K_{\Delta - 3}$.
\end{thm}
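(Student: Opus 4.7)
The plan is to reduce to the base case $\Delta = 31$ and then apply Lemma \ref{InductingOnC}. First, given a putative counterexample $G$ with $\chi(G) \geq \Delta(G) \geq 31$ and no $K_{\Delta - 3}$, pass to a vertex-critical subgraph with the same chromatic number. Brooks' theorem combined with $\chi \geq \Delta$ forces this subgraph to lie in $\C{\Delta', 3}$ for some $\Delta' \geq 31$. Since $3 \cdot 3 + 6 = 15 \leq 31$, iterating the contrapositive of Lemma \ref{InductingOnC} shows that once $\C{31, 3} = \emptyset$ is established, every $\C{k, 3}$ with $k \geq 31$ is empty as well. So it suffices to handle the single base case $\Delta = 31$.

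For the base case I would adapt the Mozhan technique behind Theorem \ref{MozhanTwoThirdsBK}, pushed further. Fix a vertex $v$ of maximum degree and consider a proper coloring $\func{c}{V(G) - v}{\irange{\Delta - 1}}$ chosen to minimize a suitable potential, for instance the vector of color-class sizes in some fixed lexicographic order, tie-broken by edge counts between distinguished color classes. Vertex criticality forces every one of the $\Delta - 1$ colors to appear in $N(v)$, so $d(v) \in \set{\Delta - 1, \Delta}$. For each color $i$ pick a neighbor $v_i$ of $v$ with $c(v_i) = i$, and for each pair $\set{i, j}$ let $P_{ij}$ be the Kempe component of $v_i$ in colors $\set{i, j}$. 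The extremal choice of $c$ forces these Kempe components to have very restricted shapes: any Mozhan rotation (a Kempe swap combined with a recoloring of $v$) that strictly decreases the potential would contradict optimality, so each potential rotation must either be blocked by an edge of $G$ between specific vertices of $N[v]$ or by the Kempe component failing to have the expected structure.

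The engine of Theorem \ref{MozhanTwoThirdsBK} is a two-color analysis: pairs of color classes cannot span too few vertices without admitting a rotation, which bounds the non-clique portion of $N[v]$ to about $\Delta / 3$ vertices. To sharpen the bound to $K_{\Delta - 3}$ I would enlarge the analysis to rotations involving three or four color classes simultaneously, iterating Mozhan's swap two or three layers deep. The main obstacle will be the branching of the case analysis: each additional layer introduces new obstructions (for instance, Kempe components that close into short cycles, or that leak out of $N[v]$ along paths temporarily increasing some coordinate of the potential). I expect the threshold $\Delta \geq 31$ to emerge as the smallest value for which the amortized count of forced edges in $N[v]$ exceeds $\binom{\Delta - 3}{2}$, giving the required $K_{\Delta - 3}$ and the desired contradiction.
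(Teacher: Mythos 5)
First, there is nothing in the paper to compare your proof against: the paper does not prove this theorem. It is quoted from Mozhan's dissertation, and the authors state explicitly that they were unable to obtain a copy and only \emph{suspect} the method is a more complicated version of the proof of Theorem \ref{MozhanTwoThirdsBK}. So the statement is cited, not established, in this paper.

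Your reduction to the base case is essentially sound and matches the machinery the paper sets up: a vertex-critical subgraph of a counterexample lands in $\C{k,3}$ for $k=\Delta$ (Brooks' Theorem rules out $\chi>\Delta$, since that would force a $K_{\Delta}\supseteq K_{\Delta-3}$), and since $31\ge 3\cdot 3+6$, Lemma \ref{InductingOnC} shows that $\C{31,3}=\emptyset$ implies $\C{k,3}=\emptyset$ for all $k\ge 31$. The genuine gap is the base case itself, which is the entire content of the theorem. Everything after ``For the base case I would adapt the Mozhan technique'' is a research plan, not an argument: you never specify the potential function precisely, never prove that a blocked rotation forces an edge inside $N[v]$, never carry out the ``two or three layers deep'' case analysis, and the claim that the threshold $\Delta\ge 31$ ``emerges'' from an amortized edge count is pure speculation --- no counting is performed. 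Note also that the two-color engine you invoke yields only $K_{\lfloor(2\Delta+1)/3\rfloor}$, i.e.\ roughly $2\Delta/3$, and closing the gap from $2\Delta/3$ to $\Delta-3$ is exactly the hard step; asserting that deeper Kempe rotations will accomplish this is not a proof. As written, the proposal establishes only the easy reduction and leaves the theorem unproved.
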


Recently, we combined Mozhan's techniques with forbidding certain induced
subgraphs, using list-coloring results (see Section~\ref{ListColoringSection}
of this paper).
These methods allowed us to strengthen Mozhan's result to the following.

\begin{thm}[Cranston and Rabern~\cite{big-cliques}]\label{big-cliques}
Every graph with $\chi = \Delta \geq 13$ contains $K_{\Delta - 3}$.
\end{thm}

In 1999, Reed used probabilistic methods to prove that the Borodin--Kostochka
conjecture holds for graphs with sufficiently large maximum degree.  In his
paper, he proved that $\Delta\ge 10^{14}$ suffices.  However, he commented
that with a more detailed analysis of the argument, the hypothesis 
could probably be weakened to $\Delta\ge10^{6}$ and maybe even $\Delta\ge10^3$.

\begin{thm}[Reed \cite{reed1999strengthening}]\label{ReedBK}
Every graph satisfying $\chi = \Delta \geq 10^{14}$ contains $K_\Delta$.
\end{thm}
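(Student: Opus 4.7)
The plan is to argue by contradiction: assume a graph $G$ with $\chi(G) \geq \Delta(G) = k$ and $\omega(G) < k$ for some $k \geq 10^{14}$, and produce a proper $(k-1)$-coloring of $G$ to contradict $\chi(G) = k$. By Lemma \ref{HereditaryReduction}, it suffices to prove the statement for the base case $\Delta = 10^{14}$, so fix $k = 10^{14}$ and take $G$ to be a vertex-critical member of $\C{k}$.

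The core structural observation is that since $\omega(G) < k$, for every vertex $v$ of maximum degree the induced subgraph $G[N(v)]$ fails to be a clique, and so $N(v)$ contains at least one non-adjacent pair. For $v$ to receive a color in a $(k-1)$-coloring, its $k$ neighbors must collectively use at most $k-1$ distinct colors, which requires at least one pair of neighbors of $v$ to share a color; the existence of a non-edge in $N(v)$ is precisely what makes this possible in principle. The whole game is to realize such savings simultaneously at every vertex.

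The main approach is a probabilistic partial coloring via the Lov\'{a}sz Local Lemma. \emph{Stage one:} assign each vertex a uniformly random color from $[k-1]$, then uncolor any vertex that clashes with a neighbor. For each $v$ define bad events $A_v$ measuring that $v$ either loses its color or accumulates ``too many'' uncolored neighbors, and bound $\Pr(A_v)$ using the sparseness given by non-edges in $N(v)$. Since each $A_v$ depends only on events at vertices within constant distance, a standard Local Lemma computation --- with the huge value $\Delta = 10^{14}$ supplying plenty of slack in the constants --- yields a proper partial $(k-1)$-coloring whose uncolored set $U$ induces a subgraph of very small maximum degree. \emph{Stage two:} extend the coloring to $U$ greedily, or via a second LLL pass, using the colors still available at each uncolored vertex.

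The main obstacle is handling vertices whose neighborhoods are \emph{almost} cliques --- those containing only a single or a handful of non-edges. For these the probabilistic gain over Brooks' theorem is marginal: the one non-edge must become monochromatic often enough to free a color at $v$, yet the uniform random coloring cannot be tuned separately for each such $v$. I would expect the actual argument to preprocess these ``dense'' vertices using a clique-hitting step in the style of Lemma \ref{HittingMaxCliques}, removing a carefully chosen independent set that simultaneously decreases $\chi$ and $\omega$, before turning the probabilistic stage loose on the sparser remainder. The gargantuan constant $10^{14}$ reflects the slack demanded by the Local Lemma estimates rather than a fundamental barrier in the structural argument.
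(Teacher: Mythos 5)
This statement is quoted from Reed's paper \cite{reed1999strengthening}; the present paper gives no proof of it, so there is nothing internal to compare against. Your proposal correctly identifies the broad shape of Reed's actual argument (a random partial $(k-1)$-coloring analyzed with the Lov\'asz Local Lemma, followed by a completion step, with special treatment of vertices whose neighborhoods are nearly complete), but as written it is a strategy outline rather than a proof, and the two places where all the difficulty lives are both left open.

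First, the Local Lemma stage is not actually carried out: you never define the bad events precisely, never bound their probabilities, and never verify the dependency condition. The heart of that computation is showing that if $G[N(v)]$ is missing many edges, then with reasonably high probability many \emph{pairs} of neighbors of $v$ retain the same color after the uncoloring step, so that $v$ sees noticeably fewer than $d(v)$ distinct colors; a single non-edge in $N(v)$, which is all that $\omega < \Delta$ guarantees, yields essentially no savings, so the probabilistic stage genuinely only works on vertices with sparse neighborhoods. Second, and more seriously, your handling of the dense vertices is an acknowledged guess (``I would expect the actual argument to\ldots''). Lemma \ref{HittingMaxCliques} cannot play the role you assign it: it only produces an independent set hitting the \emph{maximum} cliques when $\omega > \frac{2}{3}(\Delta+1)$, whereas the problematic vertices are all those whose neighborhoods are missing only $o(\Delta^2)$ edges, regardless of $\omega$. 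In Reed's proof this is where most of the work is: the graph is decomposed into a collection of ``dense sets'' (near-cliques) plus a sparse remainder, the dense sets are colored first by a separate structural argument, and only then is the probabilistic machinery applied to what remains. Without supplying that decomposition and the coloring of the dense sets, the argument does not close, so the proposal has a genuine gap rather than being a complete alternative proof.
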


A lemma from Reed's proof of the above theorem is generally useful.

\begin{lem}[Reed \cite{reed1999strengthening}]\label{ReedsLemma}
Let $G$ be a vertex critical graph satisfying $\chi = \Delta \geq 9$ having the
minimum number of vertices. 
If $H$ is a $(\Delta - 1)$-clique in $G$, then any vertex in $G - H$ has at
most four neighbors in $H$.  In particular, the $(\Delta - 1)$-cliques in $G$ are pairwise disjoint.
\end{lem}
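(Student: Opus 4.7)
The plan is to prove the statement by fixing $v \in G - H$ and showing that if $v$ has $k \geq 5$ neighbors in $H$ then one can produce a proper $(\Delta - 1)$-coloring of $G$, contradicting $\chi(G) = \Delta$. Vertex-criticality supplies a $(\Delta - 1)$-coloring $\varphi$ of $G - v$; since $H$ is a $K_{\Delta - 1}$, each of the colors $1, \ldots, \Delta - 1$ appears exactly once on $H$, and I would label $u_i \in H$ by $\varphi(u_i) = i$ and permute colors so that the in-$H$ neighbors of $v$ are $u_1, \ldots, u_k$. The coloring $\varphi$ extends to $v$ unless every color in $\{1, \ldots, \Delta - 1\}$ appears on some neighbor of $v$, so the set $S$ of colors appearing on $v$'s outside neighbors must contain $\{k+1, \ldots, \Delta - 1\}$. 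Since $|S| \leq d(v) - k \leq \Delta - k$, a quick count gives $|S \cap \{1, \ldots, k\}| \leq 1$, so at least $k - 1$ choices of $j \in \{1, \ldots, k\}$ lie outside $S$.

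The heart of the plan is a Kempe-swap argument. For each $i \in \{k+1, \ldots, \Delta - 1\}$ and $j \in \{1, \ldots, k\} \setminus S$, consider swapping colors $i$ and $j$ on the Kempe component containing $u_i$ (which, since $H$ is a clique, also contains $u_j$). Whenever $u_i$ has no outside neighbor of color $j$ and $u_j$ has no non-$v$ outside neighbor of color $i$, this component is precisely $\{u_i, u_j\}$ and the swap changes only those two colors; the in-$H$ neighbors of $v$ then use $(\{1, \ldots, k\} \setminus \{j\}) \cup \{i\}$ while its outside neighbors still use $S$, so color $j$ becomes free at $v$. Because each $u_m \in H$ has at most $2$ outside neighbors (one of which is $v$ when $m \leq k$), the total number of bad $(i, j)$ pairs is at most $2(\Delta - 1 - k) + (k - 1)$, compared with $(k-1)(\Delta - 1 - k)$ candidate pairs; for $\Delta \geq 9$ this inequality is strict when $k \in \{5, 6\}$, producing the required good swap.

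The main obstacle is the boundary: for $k \in \{7, 8\}$ with $\Delta = 9$ the counting just fails and a separate structural argument is needed. The case $k = 8$ is dispatched by observing that $\{v\} \cup H$ is then $K_\Delta$, which by vertex-criticality forces $G = K_\Delta$ and contradicts $\Delta(G) = \Delta$. For $k = 7$, the set $\{v\} \cup \{u_1, \ldots, u_7\}$ is a second $K_{\Delta - 1}$, say $H'$, overlapping $H$ in $7$ vertices; a refinement of the swap argument that uses the forbidden-color lists of the shared vertices relative to both $H$ and $H'$ simultaneously, or that reruns the swap inside $H'$, closes this last case. Finally, the pairwise-disjointness ``in particular'' is an immediate consequence: if two $K_{\Delta - 1}$'s $H_1 \neq H_2$ share $t \geq 1$ vertices, any shared $x$ satisfies $d(x) \geq |(H_1 \cup H_2) \setminus \{x\}| = 2\Delta - 3 - t$, which forces $t \geq \Delta - 3 \geq 6$; on the other hand any $u' \in H_2 \setminus H_1$ is adjacent to all of $H_1 \cap H_2$, so the just-proved main statement gives $t \leq 4$, and these bounds are incompatible unless $t = 0$.
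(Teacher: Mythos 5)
The paper does not actually prove this lemma --- it is quoted from Reed's paper as a known result --- so I am judging your argument on its own terms. The core of your proposal is sound where you carry it out: the Kempe-swap counting correctly handles $5 \le k \le \Delta-3$ (for each candidate pair $(i,j)$ the two-colored component containing $u_i$ and $u_j$ really is $\set{u_i,u_j}$ unless one of the at most two outside neighbors of $u_i$, or the at most one colored outside neighbor of $u_j$, blocks it, and your inequality $(\Delta-1-k)(k-1) > 2(\Delta-1-k)+(k-1)$ holds in that range), the case $k=\Delta-1$ correctly collapses to $G=K_\Delta$ and hence $\Delta(G)=\Delta-1$, and the disjointness deduction at the end is correct.

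The genuine gap is the case $k=\Delta-2$. First, a framing error: this case is not special to $\Delta=9$; the counting fails for $k=\Delta-2$ at \emph{every} $\Delta$, since then there is only one candidate color $i=\Delta-1$ and the single vertex $u_{\Delta-1}$ can block two values of $j$ while each $u_j$ can block the unique $i$. More importantly, your sentence ``a refinement of the swap argument \dots closes this last case'' is not a proof, and the case is not trivially closable: if some vertex $w\notin H\cup\set{v}$ is colored $\Delta-1$ and is adjacent to many of $u_1,\dots,u_{\Delta-2}$, then for every admissible $j$ the $(\Delta-1,j)$-component containing $u_{\Delta-1}$ also contains $u_j$'s neighbor $w$, so no swap is ever confined to $\set{u_{\Delta-1},u_j}$, and the swap may simply move color $j$ onto $w\in N(v)$. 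So a genuinely different idea is required here. It is telling that your argument never invokes the hypothesis that $G$ has the \emph{minimum} number of vertices; that minimality (which lets one pass to proper children of $G$, e.g.\ identifying the nonadjacent pair $v,u_{\Delta-1}$, whose union of neighborhoods has size at most $\Delta+1$, and arguing as in the paper's Lemmas \ref{EpiPower} and \ref{ToolOne}) is almost certainly the missing ingredient for $k=\Delta-2$, and until that case is written out the proof is incomplete.
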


\subsection{Our contribution}
We carry out an in-depth study of minimum counterexamples to the
Borodin--Kostochka conjecture.  Our main tool is the exclusion of induced
subgraphs 
which are $f$-choosable, where $f(v) \DefinedAs d(v) - 1$ for each vertex
$v$.  (For definitions of $f$-choosable and related terms, see
Section~\ref{ListColoringSection}.)
Since an $f$-choosable graph cannot be an induced subgraph of a vertex
critical graph with $\chi = \Delta$, we have a wealth of structural information
about minimum counterexamples to the Borodin--Kostochka conjecture.  In Section
\ref{MulesSection}, we exploit this information and minimality to improve
Reed's Lemma \ref{ReedsLemma} as follows (see Corollary~\ref{AtMostOneEdgeIn}). 

\begin{lem}
Let $G$ be a vertex critical graph satisfying $\chi = \Delta \geq 9$ having the
minimum number of vertices.  If $H$ is a $(\Delta - 1)$-clique in $G$, then any
vertex in $G - H$ has at most one neighbor in $H$.
\end{lem}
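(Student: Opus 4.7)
I would argue by contradiction. Suppose some $v\in V(G)\setminus V(H)$ has $t\geq 2$ neighbors in $H$; set $A=N_G(v)\cap V(H)$, so $|A|=t$, and $B=V(H)\setminus A$, so $|B|=\Delta-1-t$. Because $G$ is a minimum element of $\C{\Delta}$ (so $\omega(G)<\Delta$), we must have $t\leq\Delta-2$ (else $\{v\}\cup V(H)$ is a $K_\Delta$), and Reed's Lemma~\ref{ReedsLemma} already gives the sharper bound $t\leq 4$, so $t\in\{2,3,4\}$.

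The general strategy is to exhibit an induced subgraph $S$ of $G$ that is $f$-choosable with $f(u)=d_S(u)-1$. Given such an $S$, minimality of $G$ produces a proper $(\Delta-1)$-coloring of $G-V(S)$; each $w\in V(S)$ then retains a list of size at least $(\Delta-1)-(d_G(w)-d_S(w))\geq d_S(w)-1$ (using $d_G(w)\leq\Delta$), and $f$-choosability of $S$ extends the coloring to all of $G$, contradicting $\chi(G)=\Delta$. The naive candidate $S=G[V(H)\cup\{v\}]=\join{K_t}{(\djunion{K_1}{K_{\Delta-1-t}})}$ is however not $f$-choosable: it contains the clique $K_{\Delta-1}$ on $V(H)$, and assigning every vertex of $B$ the same $(\Delta-3)$-element list $L$, extending it to $L\cup\{c^\star\}$ on each vertex of $A$, and taking $L(v)=\{c^\star\}$, produces a list assignment that cannot be extended—after $v$ and $B$ are colored, only $t-2<t$ colors remain available for the $t$ mutually adjacent vertices of $A$.

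The proof must therefore enlarge $S$ beyond $V(H)\cup\{v\}$ in order to boost $d_S(u)$ for $u\in V(H)$ and fit the hypothesis of a specific $f$-choosability lemma for joins, as developed in Section~\ref{ListColoringSection}. Natural enlargements use the (one or two) neighbors of each vertex of $V(H)$ that lie outside $V(H)$ (forced to exist because $d_G(u)\geq\Delta-1$ while the $H$-degree is only $\Delta-2$), together perhaps with neighbors of $v$ outside $V(H)$, so that the resulting induced subgraph acquires a join structure $\join{K_p}{G'}$ amenable to the authors' machinery; Reed's disjointness of the $K_{\Delta-1}$'s sharply constrains how these external edges may be arranged. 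I expect the main obstacle to be identifying the correct induced subgraph $S$ in each of the cases $t\in\{2,3,4\}$ and verifying it satisfies the applicable join $f$-choosability lemma. Once $S$ is singled out, the extension argument above is mechanical; the combinatorial work of locating it, and the underlying join $f$-choosability result it rests on, are the technical heart of the matter and proceed by case analysis on $t$ and on the configuration of edges between $V(H)$ and $V(G)\setminus V(H)$.
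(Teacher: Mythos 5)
Your proposal is a strategy sketch rather than a proof: after correctly observing that $G[V(H)\cup\{v\}]=\join{K_t}{(\djunion{K_1}{K_{\Delta-1-t}})}$ is not $d_1$-choosable (indeed the clique $V(H)$ alone already defeats any such attempt), you defer the entire argument to ``identifying the correct induced subgraph $S$'' that is $d_1$-choosable and contains $V(H)\cup\{v\}$, and you never produce one. This is not a small omission; it is the whole content of the lemma. Worse, the framework you commit to --- delete an $f$-choosable induced subgraph $S$, color $G-S$ by criticality, extend --- is very likely not realizable here: every vertex of $H$ has at most two neighbors outside $H$, so any enlargement $S$ still contains the $(\Delta-1)$-clique with lists only barely larger than its internal degrees, and no join-choosability lemma in the paper (nor in its extended version) applies to such an $S$. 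The paper does not prove the lemma this way.

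What is actually missing is the use of minimality under operations other than vertex deletion. The paper's proof (Corollary~\ref{AtMostOneEdgeIn} via Lemma~\ref{NoForksThatArentKnives}) works in the complement: it sets $F=G[V(H)\cup\{v\}]$, $H'=G-F$, and forms \emph{children} of $G$ by adding edges inside $H'+z$ and contracting independent sets there (Lemma~\ref{EpiPower}). Minimality then forces each such child either to be $(\Delta-1)$-colorable --- yielding a coloring of $G-F$ in which specified external vertices get equal or distinct colors, so that the lists induced on $F$ can be completed by Hall's theorem --- or to contain a near-clique $K_\Delta-E(A)$ for some small graph $A$, and these near-cliques are the joins excluded by the list-coloring lemmas (Lemmas~\ref{NoE2}, \ref{NoE3}, \ref{NoTwooks}, \ref{AJoinP_4}, \ref{E2Classification}, \ref{MinimalK2}). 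Your outline contains neither the edge-addition/contraction step nor the Hall's-theorem completion of a partial coloring with controlled lists, and without them the case analysis on $t\in\{2,3,4\}$ you anticipate has no mechanism to close. (A minor additional slip: your exhibited bad list assignment gives $v$ a list of size $1$, whereas $d_S(v)-1=t-1$; this matters only for $t\ge 3$, and the non-choosability conclusion survives because the pot on $V(H)$ has only $\Delta-2$ colors, but as written the assignment is not a $d_1$-assignment.)
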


Moreover, we lift the result out of the context of a minimum counterexample to
the Borodin--Kostochka conjecture, and view it in the more general setting of
graphs satisfying a certain criticality condition---we call such graphs mules.
This allows us to prove meaningful results for values of $\Delta$ less than $9$.  

Let $K_t$ and $E_t$ be the complete and edgeless graphs on $t$ vertices,
respectively.  
The disjoint union of graphs $G$ and $H$ is denoted $\djunion{G}{H}$.
The \emph{join} of graphs $G$ and $H$, denoted $\join{G}{H}$, is formed from
disjoint copies of $G$ and $H$ by adding every edge with one endpoint in $G$ and
one endpoint in $H$.
Since a graph containing $K_\Delta$ as a subgraph also
contains $\join{K_t}{E_{\Delta - t}}$ as a subgraph for any $t \in
\{1,\ldots,\Delta - 1\}$, the Borodin--Kostochka conjecture implies the
following conjecture.  Our main result 
(Corollary~\ref{MainResult})
is that the two conjectures are equivalent.

\begin{conjecture}\label{K3Conjecture}
Any graph with $\chi = \Delta \geq 9$ contains $\join{K_3}{E_{\Delta-3}}$ as a
subgraph.
\end{conjecture}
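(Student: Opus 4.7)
The plan is to argue by contradiction. Suppose $G$ is a vertex critical graph with $\chi(G) = \Delta(G) = \Delta \geq 9$ that fails to contain $\join{K_3}{E_{\Delta-3}}$ as a subgraph, chosen to minimize $|V(G)|$. A $K_\Delta$ subgraph would trivially exhibit $\join{K_3}{E_{\Delta-3}}$ (pick any three vertices of the clique as the triangle and the remaining $\Delta - 3$ as common neighbors), so we may assume $\omega(G) < \Delta$, i.e., $G \in \C{\Delta}$. Using Lemma~\ref{InductingOnC} to iteratively delete a maximal independent set hitting a maximum clique should let us push down to the regime $\Delta = 9$, where the essential work lies. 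There, Theorem~\ref{BorodinKostochkaBK} already guarantees a $K_5 \subseteq G$, and for larger $\Delta$ the stronger Theorems~\ref{KostochkaBK} and \ref{MozhanBK} supply cliques of size close to $\Delta$, making the join much easier to locate.

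With a large clique $K \subseteq G$ in hand, the next step is to exploit the improved Reed-type lemma stated at the end of Section~1.2, which forces every outside vertex to have at most one neighbor in a $K_{\Delta-1}$ of $G$. Partition $V(G) \setminus K$ by the unique $K$-neighbor of each outside vertex (or by the few $K$-neighbors when $|K|$ is smaller). For a triangle $T \subseteq K$, the common outside-neighborhood of $T$ is then exactly the set of vertices of $V(G)\setminus K$ sitting over the three classes indexed by $T$. The strategy is to average over the $\binom{|K|}{3}$ triangles of $K$: if no $T$ has at least $\Delta-3$ common outside-neighbors, then the outside vertices are distributed too thinly across the $|K|$ classes, and one should be able to extract an induced subgraph of $G$ that is $f$-choosable for $f(v) \DefinedAs d(v) - 1$. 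Such an induced subgraph cannot live inside a vertex critical graph with $\chi = \Delta$, which delivers the contradiction.

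The main obstacle is the last step: converting the triangle-deficit into a concrete $f$-choosable join. The budget at $\Delta = 9$ is very tight: six common outside-neighbors of a triangle of $K$ is the target, while every vertex of $G$ has degree at most nine and $\omega(G) \leq 8$, so the degree accounting leaves almost no slack. Since the paper announces that this conjecture is \emph{equivalent} to the full Borodin-Kostochka conjecture, any successful proof will in effect establish $\C{9} = \emptyset$. So the obstacle is really inherent rather than technical: the $f$-choosable join one must exhibit---presumably via the Small Pot Lemma and related machinery alluded to in the introduction---needs to be strong enough to rule out every member of $\C{9}$, not merely those avoiding $\join{K_3}{E_6}$, and this is precisely the open problem itself.
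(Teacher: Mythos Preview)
The statement is a \emph{conjecture}, and the paper does not prove it: what the paper establishes (via Lemma~\ref{K3sOut}) is only that this conjecture is \emph{equivalent} to the Borodin--Kostochka conjecture. There is therefore no paper proof to compare your proposal against, and your final paragraph correctly identifies that any argument proving the conjecture outright would resolve Borodin--Kostochka itself.

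Setting that aside, your second paragraph contains an internal contradiction. You invoke the improved Reed-type lemma to conclude that every vertex outside a $(\Delta-1)$-clique $K$ has at most one neighbor in $K$, and then propose to average over triangles $T \subseteq K$ in search of one with $\Delta-3$ common outside-neighbors. But a common neighbor of $T$ lying outside $K$ would be adjacent to all three vertices of $T$ and hence have at least three neighbors in $K$, contradicting the very lemma you just applied. The common outside-neighborhood of every such triangle is therefore empty, and the averaging yields nothing. Inside $K$ a triangle has only $\Delta-4$ further common neighbors, one short of the target, so the strategy of building the $\join{K_3}{E_{\Delta-3}}$ around a $(\Delta-1)$-clique is blocked outright, not merely tight. (There is also a smaller issue: your $G$ is minimum among counterexamples to Conjecture~\ref{K3Conjecture}, not a $\C{\Delta}$-mule, so Corollary~\ref{AtMostOneEdgeIn} does not apply directly; a smaller member of $\C{\Delta}$ might well contain $\join{K_3}{E_{\Delta-3}}$.)
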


In fact, using Kostochka's reduction (Lemma~\ref{HereditaryReduction}) to the
case $\Delta = 9$, the following conjecture is also equivalent.

\begin{conjecture}\label{K3ConjectureReduced}
Any graph with $\chi = \Delta = 9$ contains $\join{K_3}{E_6}$ as a
subgraph.
\end{conjecture}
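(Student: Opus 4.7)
The plan is to consider a vertex-minimal graph $G$ with $\chi(G) = \Delta(G) = 9$ that does not contain $\join{K_3}{E_6}$ as a subgraph, and to derive a contradiction. By minimality $G$ is vertex-critical, so $G \in \C{9}$, which places it squarely within the structural framework developed in this paper.

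First I would extract the strongest clique structure available. Theorem \ref{BorodinKostochkaBK} guarantees a $K_5$ in $G$. The main driver is the $f$-choosability framework from Section \ref{MulesSection}: any induced subgraph $H$ that is $f$-choosable with $f(v) = d(v) - 1$ is forbidden in $G$, since such an $H$ could be peeled off and its list-coloring extended to an $8$-coloring of $G$. Systematic cataloguing of $f$-choosable joins, combined with the paper's strengthened Reed-type lemma that each vertex outside a $K_8 \subseteq G$ has at most one neighbor in it, should pin down the induced neighborhood $G[N(v)]$ of every degree-$9$ vertex $v$ to a short explicit list of possibilities.

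Next I would try to locate $\join{K_3}{E_6}$ directly. For each $v$ with $d(v) = 9$ we have $|N(v)| = 9$, and finding $\join{K_3}{E_6}$ containing $v$ amounts to producing an edge $xy \in E(G[N(v)])$ whose endpoints share six common neighbors that form an independent set in $G$. If no degree-$9$ vertex admits such an edge, the resulting combinatorial constraint on each $G[N(v)]$, together with the excluded $f$-choosable subgraphs, should funnel $G[N(v)]$ into a very narrow list of graphs, each of which I would then rule out either by exhibiting $\join{K_3}{E_6}$ elsewhere in $G$ or by exhibiting an $f$-choosable induced subgraph that yields the final contradiction.

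The main obstacle will be the intermediate clique regime: bootstrapping $\omega(G)$ from the easy bound of $5$ up to a value (say $\omega(G) \geq 7$) at which the sharpened Reed lemma becomes useful. The $f$-choosability catalogue would need to be rich enough to exclude every induced neighborhood compatible with $\omega(G) \in \set{5,6}$ and with the absence of $\join{K_3}{E_6}$, and it is at this step that the deepest work will be required, since the statement is as strong as the Borodin-Kostochka conjecture itself. I expect such a plan either to resolve BK at $\Delta = 9$ outright or to identify a small, explicit family of graphs on which any future technique must make progress; the hope is that the restriction to $\Delta = 9$, combined with the enriched $f$-choosability toolkit, narrows the case analysis sufficiently to complete the argument.
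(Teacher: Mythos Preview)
The statement you are attempting to prove is a \emph{conjecture} in the paper, not a theorem; the paper gives no proof of it. What the paper does prove is that this conjecture is \emph{equivalent} to the full Borodin-Kostochka conjecture: Lemma~\ref{K3sOut} shows that for $k \geq 9$ no $k$-mule contains $\join{K_3}{E_{k-3}}$, so any graph with $\chi = \Delta \geq 9$ containing $\join{K_3}{E_{\Delta-3}}$ must already contain $K_\Delta$, and Lemma~\ref{HereditaryReduction} reduces the remaining implication to $\Delta = 9$. Thus a proof of Conjecture~\ref{K3ConjectureReduced} would settle Borodin-Kostochka outright, which remains open.

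Your proposal is therefore not a proof but a research plan, and you correctly identify its weak point: the step where you hope the $d_1$-choosability catalogue is ``rich enough'' to handle every neighborhood compatible with $\omega(G) \in \{5,6\}$ is precisely the missing idea. Nothing in the paper's toolkit (Lemmas~\ref{K_tClassification}, \ref{K3Classification}, \ref{E2Classification}, \ref{MinimalK2}, or the mule machinery of Section~\ref{MulesSection}) forces $\omega(G) \geq 7$ in a $9$-mule, nor does it force $\join{K_3}{E_6}$ to appear; indeed the whole point of the paper is that these tools stop at excluding $\join{K_3}{E_6}$ from mules rather than producing it in an arbitrary $\chi = \Delta = 9$ graph. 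So the ``bootstrapping'' step you flag as the main obstacle is a genuine gap with no known resolution, not merely a technical hurdle.
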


\section{Mules and the Main Result}\label{MulesSection}
In this section we exclude more induced subgraphs in a minimum counterexample to
the Borodin--Kostochka conjecture than we can exclude purely using list coloring
techniques.  In fact, we lift these results out of the context of a minimum
counterexample and state them in the more general setting of graphs satisfying
a certain criticality condition defined in terms of the ordering given in
Definitions~\ref{epi} and \ref{child}.  The main result of
Section~\ref{MulesSection} (and, in fact, of the whole paper) is
Lemma~\ref{K3sOut}, which implies that Conjecture~\ref{K3Conjecture} is
equivalent to the Borodin-Kostochka Conjecture.

We should note that many of the results in Section~\ref{MulesSection} rely on
list-coloring results from Section~\ref{ListColoringSection}.  The
list-coloring results are independent of everything in Section
\ref{MulesSection}.  However, to focus attention on our main result, that
Conjecture~\ref{K3Conjecture} is equivalent to the Borodin-Kostochka
Conjecture, we first prove it in Section~\ref{MulesSection}, and only afterward
do we prove the list-coloring results in Section~\ref{ListColoringSection}.

\begin{defn}
\label{epi}
If $G$ and $H$ are graphs, an \emph{epimorphism} is a graph homomorphism $\funcsurj{f}{G}{H}$ such that $f(V(G)) = V(H)$.  We indicate this with the arrow $\surj$.
\end{defn}

\begin{defn}
\label{child}
Let $G$ be a graph.  A graph $A$ is called a \emph{child} of $G$ if $A \neq G$
and there exists $H \unlhd G$ and an epimorphism $\funcsurj{f}{H}{A}$.  
(Recall that $H \unlhd G$ denotes that $H$ is an induced subgraph of $G$.)
\end{defn}

Note that the child-of relation is a strict partial order on the set of (finite
simple) graphs $\fancy{G}$.  We call this the \emph{child order} on $\fancy{G}$
and denote it by `$\prec$'.  By definition, if $H \lhd G$ then $H \prec G$.

\begin{lem}\label{well-founded}
The ordering $\prec$ is well-founded on $\fancy{G}$; that is, every nonempty
subset of $\fancy{G}$ has a minimal element under $\prec$.
\end{lem}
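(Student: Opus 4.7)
The plan is to define a monotone embedding of $(\fancy{G}, \prec)$ into the well-founded poset $(\IN \times \IN, <_{\text{lex}})$. For each graph $G$, set
$\mu(G) \DefinedAs \parens{\card{V(G)},\; \binom{\card{V(G)}}{2} - \card{E(G)}}$.
Once I show $A \prec G \Rightarrow \mu(A) <_{\text{lex}} \mu(G)$, well-foundedness of $\prec$ follows immediately: any nonempty $\fancy{S} \subseteq \fancy{G}$ has an element minimizing $\mu|_{\fancy{S}}$ (since $(\IN \times \IN, <_{\text{lex}})$ is well-founded as an ordinal of type $\omega^2$), and such an element is necessarily $\prec$-minimal in $\fancy{S}$.

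To verify monotonicity, I would unpack the definition: $A \prec G$ gives $H \unlhd G$ and an epimorphism $\funcsurj{f}{H}{A}$ with $A \neq G$. Since $f$ is surjective on vertex sets and $H$ is an induced subgraph of $G$, we have $\card{V(A)} \leq \card{V(H)} \leq \card{V(G)}$. If the outer inequality is strict, the first lex coordinate of $\mu$ already drops, so the only real content is the borderline case $\card{V(A)} = \card{V(G)}$.

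In that borderline case, the equalities force $H = G$ (an induced subgraph on all of $V(G)$ must be $G$ itself) and $f$ to be a bijection on vertex sets. The homomorphism property then yields a well-defined injection $\set{u,v} \mapsto \set{f(u), f(v)}$ from $E(G)$ into $E(A)$, so $\card{E(A)} \geq \card{E(G)}$. The step to highlight is strict inequality: if $\card{E(A)} = \card{E(G)}$, this edge map is also a bijection, hence $f^{-1}$ preserves edges, making $f$ a graph isomorphism and contradicting $A \neq G$ (viewing $\fancy{G}$ up to isomorphism, as is standard). Thus $\card{E(A)} > \card{E(G)}$, which drops the second lex coordinate while fixing the first, and again $\mu(A) <_{\text{lex}} \mu(G)$.

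The only mild obstacle sits in this borderline case: a vertex-bijective homomorphism is not automatically an isomorphism (the identity map $E_n \twoheadrightarrow K_n$ is an easy counterexample), so one cannot prove well-foundedness by counting vertices alone. Recording non-edges as a tiebreaker is exactly what supplies the needed descent when $\card{V(A)} = \card{V(G)}$, since the homomorphism is then forced to add at least one edge.
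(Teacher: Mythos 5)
Your proof is correct and is essentially the paper's argument: the paper picks $G \in \fancy{T}$ minimizing $\card{G}$ and then maximizing $\size{G}$, which is exactly your lexicographic measure $\mu(G) = \parens{\card{V(G)}, \binom{\card{V(G)}}{2} - \card{E(G)}}$, and it likewise rests on the observation that any child has fewer vertices or more edges. Your write-up merely makes explicit the borderline-case justification (vertex-bijective, edge-count-preserving epimorphisms are isomorphisms) that the paper states without proof.
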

\begin{proof}
Let $\fancy{T}$ be a nonempty subset of $\fancy{G}$.  Pick $G \in \fancy{T}$
minimizing $\card{V(G)}$ and then maximizing $\card{E(G)}$.  
Since any child of $G$ must have fewer vertices or more edges (or both), we see
that $G$ is minimal in $\fancy{T}$ with respect to $\prec$.
\end{proof}

\begin{defn}
Let $\fancy{T}$ be a collection of graphs.  A minimal graph in $\fancy{T}$
under the child order is called a \emph{$\fancy{T}$-mule}.
\end{defn}

With the definition of mule we have captured the important properties (for
coloring) of a counterexample first minimizing the number of vertices and then
maximizing the number of edges.  Viewing $\fancy{T}$ as a set of
counterexamples, we can add edges to or contract independent sets in induced
subgraphs of a $\fancy{T}$-mule and get a non-counterexample.  We could do the
same with a minimum counterexample, but with mules we have more minimal objects
to work with.  One striking consequence of this is that many of our proofs
naturally construct multiple counterexamples to the Borodin-Kostochka
Conjecture for small $\Delta$.  The small counterexamples $M_{6,1}$ (Figure
\ref{fig:M_61})  and $M_{7,1}$ (Figure \ref{fig:M_7}) were constructed in 1978
by Benedict and Chinn
\cite{BenedictAndChinn1978}.  In his dissertation \cite{CatlinDiss}, Catlin
extended this construction to get an infinite family of counterexamples for
$\Delta = 6$ and seven counterexamples for $\Delta=7$.  This construction uses a
special case of gadgets that Molloy and Reed \cite{molloy2002graph} call
\emph{reducers}.  These reducer gadgets were used by Emden-Weinert, Hougardy,
and Kreuter \cite{emden1998uniquely} in their proof that $(\Delta + 1 -
k)$-coloring is NP-hard when $k^2 + k > \Delta$.  For $\Delta=8$, the one known
counterexample $M_8$ (Figure \ref{fig:M_8}) was constructed by Catlin
\cite{catlin1979hajos} as a counterexample to the Haj{\'o}s conjecture.  Catlin
did not know of this counterexample at the time of his dissertation and the
same is likely true for $M_{7,2}$ (Figure \ref{fig:M_72}) which is $M_8$ with a
maximum independent set removed.

The more general mule-framework puts these different examples into context and
gives us some understanding of why they should be counterexamples, instead of
just knowing that they are.  We are hopeful that a better understanding of why
the Borodin-Kostochka conjecture fails for small $\Delta$ will contribute to its
solution for large $\Delta$.  In \cite{king2012fractional}, King, Lu and Peng
prove the Borodin-Kostochka conjecture for the fractional chromatic number by
reducing a more general statement to the $\Delta = 4$ case.  We think it may be
possible to do something similar for the Borodin-Kostochka conjecture, but we'd
need a better understanding of counterexamples for small $\Delta$.  For example,
our main result in Corollary \ref{MainResult} shows that to prove the
Borodin-Kostochka conjecture, it suffices to find a $\join{K_3}{E_{\Delta-3}}$
subgraph, instead of the (larger) subgraph $K_\Delta$.   Since all counterexamples
we know of for $\Delta \ge 6$ contain a $\join{K_3}{E_{\Delta-3}}$ subgraph, it
may be possible to prove this more general conjecture for $\Delta=6$ and then extend that up to large $\Delta$.

\subsection{Excluding induced subgraphs in mules}
\label{excludingInMules}
Our main goal in this section is to prove Lemma~\ref{K4sOut}, which says that
(with only one exception) for $k\ge 7$, no $\C{k}$-mule contains
$\join{K_4}{E_{k-4}}$ as a subgraph.  This result immediately implies that the
Borodin-Kostochka Conjecture is equivalent to Conjecture~\ref{K4Conjecture}. 
This equivalence is a major step toward our main result.  Our approach is
based on Lemma~\ref{K_tClassification}, which implies that if $G$ is a
counterexample to Lemma~\ref{K4sOut}, then the vertices of the $E_{k-4}$ induce
either $E_3$, a claw ($K_{1,3}$), a clique, or an almost complete graph (a graph $H$ is
\emph{almost complete} if $\omega(H)\ge |V(H)|-1$).  Our job in this section
consists of showing that each of these four possibilities is, in fact,
impossible.  Ruling out the clique is easy, but the other possibilities require
more work.  The cases of $E_3$ and the claw are handled in Lemma~\ref{NoE3},
and the case of an almost complete graph (which requires the most work) is
handled by Corollary~\ref{AtMostOneEdgeIn}.  The rest of the section consists
of structural lemmas, which build toward the proofs of Lemmas~\ref{NoE3} and
\ref{NoForksThatArentKnives} (which in turn yields
Corollary~\ref{AtMostOneEdgeIn}).  
We give names to a few small graphs.  The \emph{chair} is formed from $K_{1,3}$
by subdividing one edge.  The \emph{antichair} is its complement.
The \emph{paw} is formed from $K_3$ by adding a pendant edge at one vertex.  The
\emph{antipaw} is its complement.
In Sections~\ref{JoinsWithThree} and \ref{JoinsWithTwos}, we depict three of
these graphs in Figures~\ref{fig:antipaw} and \ref{fig:chair_antichair}.
\bigskip

For $k \in \IN$, by a \emph{$k$-mule} we mean a $\C{k}$-mule.

\begin{lem}\label{EpiPower}
Let $G$ be a $k$-mule with $k \geq 4$.  If $A$ is a child of $G$ with $\Delta(A) \leq k$ then either
\begin{itemize}
\item $A$ is $(k - 1)$-colorable; or
\item $A$ contains $K_k$.
\end{itemize}
\end{lem}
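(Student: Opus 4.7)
The plan is to argue by contradiction. Suppose $A$ is a child of $G$ with $\Delta(A) \le k$ but $A$ is neither $(k-1)$-colorable nor contains $K_k$ as a subgraph. I will exhibit a graph $B \in \C{k}$ with $B \prec G$, contradicting the minimality of the $k$-mule $G$.

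Since $\chi(A) \ge k$, the graph $A$ has a $k$-vertex-critical induced subgraph $B$ (obtained by deleting vertices one at a time so long as the chromatic number does not decrease), and any such $B$ is connected. As an induced subgraph of $A$, it inherits $\omega(B) \le \omega(A) < k$ and $\Delta(B) \le \Delta(A) \le k$. Moreover $\delta(B) \ge k - 1 \ge 3$ because $B$ is $k$-vertex-critical with $k \ge 4$, so in particular $\Delta(B) \ge 3$, and Brooks' theorem applies to yield $\chi(B) \le \max\{\omega(B), \Delta(B)\}$. Since $\omega(B) < k = \chi(B)$, this forces $\Delta(B) \ge k$, hence $\Delta(B) = k$, placing $B$ in $\C{k}$.

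It remains to verify $B \prec G$. If $B = A$ then $B = A \prec G$ directly, since $A$ is a child of $G$. Otherwise $B \lhd A$, so $B \prec A$ by the remark that $\lhd$ implies $\prec$, and transitivity of the strict partial order $\prec$ yields $B \prec A \prec G$. Either way, $B \in \C{k}$ and $B \prec G$, contradicting that $G$ is a $k$-mule. There is no substantive obstacle in the argument; the hypothesis $k \ge 4$ is used precisely to ensure $\Delta(B) \ge 3$ so that Brooks' theorem can be invoked in the form stated in the paper, and to guarantee that no $k$-critical graph is an odd cycle.
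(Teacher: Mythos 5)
Your proof is correct and follows essentially the same route as the paper's: pass to a vertex-critical (induced) subgraph, use the mule's minimality under $\prec$ to rule out membership in $\C{k}$, and invoke Brooks' Theorem to force a $K_k$. The only cosmetic difference is that you run the argument by contradiction and verify $B \in \C{k}$ directly, whereas the paper assumes $A$ vertex-critical without loss of generality and deduces $A = K_k$ from $A \notin \C{k}$.
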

\begin{proof}
Let $A$ be a child of $G$ with $\Delta(A) \le k$.  Suppose $\chi(A) \ge k$ and
$\omega(A) < k$.  By Brooks' Theorem, $\chi(A) = \Delta(A) = k$.  Since $A
\prec G$ and $G$ is a mule, $A \not \in \C{k}$ and thus $A$ is not vertex
critical.  Let $A'$ be a vertex critical induced subgraph of $A$; observe that
$A' \prec G$.  Thus $\chi(A') = k$ and $\omega(A') < k$, so $\Delta(A') = k$.
Consequently $A' \in \C{k}$, contradicting the fact that $G$ is a mule.
\end{proof}

Adding edges to a graph yields an epimorphism; specifically, if $H$ is a
spanning subgraph of $G$, we have the \emph{inclusion epimorphism}
$\funcsurj{f}{H}{G}$ given by $f(v) = v$ for all $v \in V(H)$.
In the next lemma and thereafter, we write $d_H(v)$, for a vertex $v$ and a
subgraph $H$, to denote $|N(v)\cap V(H)|$.

\begin{lem}\label{UnequalColoredPairOrCliqueMinusEdge}
Let $G$ be a $k$-mule with $k \geq 4$ and $H \unlhd G$.  
Assume $x, y \in V(H)$, $xy \not \in E(H)$ and both $d_H(x) \leq k-1$ and
$d_H(y) \leq k-1$.  If for every $(k - 1)$-coloring $\pi$ of $H$ we have
$\pi(x) = \pi(y)$, then $H$ contains $\join{\set{x, y}}{K_{k-2}}$.
\end{lem}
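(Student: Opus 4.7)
The plan is to add the edge $xy$ to $H$ and exploit Lemma~\ref{EpiPower}. Let $H' \DefinedAs H + xy$, the graph obtained by adding the edge $xy$ to $H$. Since $xy \notin E(H)$ and adding edges yields an epimorphism (as noted just before the statement), we have $H \surj H'$; combined with $H \unlhd G$, this shows that $H'$ is a child of $G$ (it is distinct from $G$: if $V(H) \subsetneq V(G)$ then $H'$ is strictly smaller than $G$, and otherwise $H = G$ so $H' = G + xy \neq G$). Furthermore, the degree bounds $d_H(x), d_H(y) \le k-1$ together with $\Delta(H) \le \Delta(G) = k$ give $\Delta(H') \le k$.

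Next, I would apply Lemma~\ref{EpiPower} to the child $H'$ of $G$. This yields two cases. If $H'$ is $(k-1)$-colorable, then any such coloring restricts to a $(k-1)$-coloring $\pi$ of $H$, and the presence of the edge $xy$ in $H'$ forces $\pi(x) \neq \pi(y)$, directly contradicting our hypothesis. Therefore $H'$ must contain a $K_k$.

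To finish, I would argue that this $K_k$ must use the added edge $xy$. Since $G \in \C{k}$, by definition $\omega(G) < k$, and hence the induced subgraph $H$ also satisfies $\omega(H) < k$. Thus $H$ itself cannot contain a $K_k$, so every $K_k$ in $H' = H + xy$ must include the new edge. In particular, both $x$ and $y$ lie in this $K_k$, and the remaining $k-2$ vertices of the clique are common neighbors of $x$ and $y$ in $H$ that induce a $K_{k-2}$. This is exactly the desired copy of $\join{\set{x,y}}{K_{k-2}}$ in $H$.

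There is no real obstacle here; the only subtlety is verifying that $H'$ genuinely qualifies as a child of $G$ (i.e., $H' \neq G$ and $\Delta(H') \le k$) so that Lemma~\ref{EpiPower} applies. Once that bookkeeping is done, the two alternatives from Lemma~\ref{EpiPower} line up perfectly with the hypothesis ($(k-1)$-colorability is ruled out by the monochromatic-pair assumption) and the conclusion (the forced $K_k$ through $xy$ yields the join).
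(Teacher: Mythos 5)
Your proof is correct and follows essentially the same route as the paper's: form $H+xy$ via the inclusion epimorphism, apply Lemma~\ref{EpiPower}, rule out the $(k-1)$-colorable alternative using the hypothesis, and observe that the resulting $K_k$ must use the edge $xy$ since $\omega(G)<k$. The only difference is that you spell out the bookkeeping (that $H+xy$ is a proper child with $\Delta\le k$) which the paper leaves implicit.
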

\begin{proof}
Suppose that for every $(k - 1)$-coloring $\pi$ of $H$ we have $\pi(x) =
\pi(y)$.  Using the inclusion epimorphism $\funcsurj{f_{xy}}{H}{H + xy}$ in
Lemma \ref{EpiPower} shows that either $H + xy$ is $(k - 1)$-colorable or $H +
xy$ contains $K_k$.  Since a $(k - 1)$-coloring of $H + xy$ would induce a
$(k - 1)$-coloring of $H$ with $x$ and $y$ colored differently, we conclude
that $H + xy$ contains $K_k$.  But then $H$ contains $\join{\set{x,
y}}{K_{k-2}}$ and the proof is complete.
\end{proof}

We will often begin by coloring some subgraph $H$ of our graph $G$, and work to
extend this partial coloring.  More formally, let $G$ be a graph and $H \lhd
G$.  For $t \geq \chi(H)$, let $\pi$ be a proper $t$-coloring of $H$.  
For each $x \in V(G-H)$, put $L_{\pi}(x) \DefinedAs \set{1, \ldots, t} -
\{\pi(y): y\in N(x)\cap V(H)\}$.
Then $\pi$ is extendable to a
$t$-coloring of $G$ if and only if $L_{\pi}$ admits a coloring of $G-H$. 
We will use this fact repeatedly in the proofs that follow.  The following
generalizes a lemma due to Reed \cite{reed1999strengthening}; the proof is
essentially the same.

\begin{lem}\label{E2impliesE3}
For $k \geq 6$, if a $k$-mule $G$ contains an induced copy of $\join{E_2}{K_{k -
2}}$, then $G$ contains an induced copy of $\join{E_3}{K_{k - 2}}$.
\end{lem}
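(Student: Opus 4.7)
The plan is to produce a vertex $x_3 \in V(G) \setminus V(H)$ adjacent to every vertex of the $K_{k-2}$ inside $H$. Write $K$ for this $K_{k-2}$ and $x_1, x_2$ for the two non-adjacent vertices of the $E_2$. Once such an $x_3$ is found, the mule hypothesis $\omega(G) < k$ forbids $x_3$ from being adjacent to either $x_1$ or $x_2$ (otherwise $K \cup \set{x_i, x_3}$ would be a $K_k$ in $G$), and then the induced subgraph on $V(H) \cup \set{x_3}$ is exactly $\join{E_3}{K_{k-2}}$. Each $v \in K$ satisfies $d_H(v) = k-1$, so from $d_G(v) \leq k$ it has at most one external neighbor, denoted $v'$ when it exists. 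Assume for contradiction no admissible $x_3$ exists; in particular, no vertex outside $H$ is adjacent to every vertex of $K$.

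The engine is the following coloring-extension principle: for any $v \in K$, any $(k-1)$-coloring $\pi$ of $G - v$ (which exists by criticality of the mule) uses $k-3$ colors on $K \setminus \set{v}$ and assigns each of $x_1, x_2$ one of the two remaining colors. If $\pi(x_1) = \pi(x_2)$ then only $k-2$ colors appear on $N_G(v) \setminus \set{v'}$, so $\pi$ extends to $v$ unless $v'$ exists with color equal to the unique color absent from $N_G(v) \setminus \set{v'}$. Since $\chi(G) = k$, every coloring of $G - v$ must block this extension.

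The first phase reduces to the case $d_G(v) = k$ for every $v \in K$. If some $v_0 \in K$ had $d_G(v_0) = k-1$, the principle forces $\pi(x_1) \neq \pi(x_2)$ for every coloring of $G - v_0$. The child $(G - v_0) + x_1 x_2$ still has $\Delta \leq k$ (deleting $v_0 \in N(x_1) \cap N(x_2)$ before adding the edge offsets the increase), so Lemma \ref{EpiPower} yields either a $(k-1)$-coloring of $G$ (a contradiction) or a $K_k$ that must use the new edge, producing a clique $W \cong K_{k-2}$ inside $N_G(x_1) \cap N_G(x_2) \setminus \set{v_0}$. Since $|W| = k-2 > k-3 = |K \setminus \set{v_0}|$, some $u \in W$ lies outside $V(H)$; the no-$x_3$ assumption forces $u \not\sim v_0$. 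When $W = (K \setminus \set{v_0}) \cup \set{u}$ the identification $G/(v_0 = u)$ has merged degree $|N(v_0) \cup N(u)| = |N(u)| \leq k$ (since $N(v_0) \subseteq N(u)$), and Lemma \ref{EpiPower} applied once more delivers either a $(k-1)$-coloring of $G$ or a $K_k$ in $G$, both impossible. In the remaining subcase, where $W$ contains more than one external vertex, the set $W \cup \set{x_1, x_2}$ induces a fresh copy of $\join{E_2}{K_{k-2}}$ sharing $x_1, x_2$ but swapping $v_0$ for $u$, and iterating the argument on this new copy terminates in finitely many steps.

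In the second phase every $v \in K$ has a unique external neighbor $v'$. Let $S = \set{v' : v \in K}$; if $|S| = 1$ its element is the forbidden $x_3$, so $|S| \geq 2$. Picking $v_1, v_2 \in K$ with $v_1' \neq v_2'$ and inspecting $(k-1)$-colorings of $G - v_1$, the extension principle now forces each coloring to satisfy either $\pi(x_1) \neq \pi(x_2)$ or $\pi(v_1')$ equals the unique color missing from $(K \setminus \set{v_1}) \cup \set{x_1}$. The first alternative is ruled out by the same $(G - v_1) + x_1 x_2$ trick as in the first phase, pinning $\pi(x_1) = \pi(x_2)$ and $\pi(v_1') \neq \pi(x_1)$ in every such coloring. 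Applying Lemma \ref{UnequalColoredPairOrCliqueMinusEdge} to a suitable modification of $G - v_1$ involving the non-adjacent pair $(x_1, v_1')$ forces a $K_{k-2}$ inside $N_G(x_1) \cap N_G(v_1') \setminus \set{v_1}$, which combined with the structure already built produces either the forbidden $x_3$ or a $K_k$ in $G$.

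The main obstacle is keeping $\Delta \leq k$ through the child operations: naive edge additions or identifications performed directly on $G$ push $\Delta$ to $k+1$ or $k+2$ when $d_G(x_1)$ or $d_G(x_2)$ equals $k$, which would block Lemma \ref{EpiPower}. The recurring trick throughout is to perform these operations inside $G - v$ for a carefully chosen $v \in K$: since $v \in N_G(x_1) \cap N_G(x_2)$, deleting $v$ reduces the degrees of $x_1, x_2$ by one, providing exactly the slack needed to absorb the subsequent edge addition or merge and keep the resulting child inside the admissible range $\Delta \leq k$.
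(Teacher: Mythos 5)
Your argument breaks at its first real step. From $d_G(v_0)=k-1$ you correctly deduce that every $(k-1)$-coloring $\pi$ of $G-v_0$ has $\pi(x_1)\neq\pi(x_2)$ (otherwise $\pi$ extends to $v_0$). But you then apply Lemma \ref{EpiPower} to the child $(G-v_0)+x_1x_2$ and claim it yields a contradiction or a $K_k$ through the new edge. It yields neither: $G-v_0$ is $(k-1)$-colorable by criticality, and since every such coloring already separates $x_1$ from $x_2$, the edge-added child is $(k-1)$-colorable too, so the first alternative of Lemma \ref{EpiPower} holds harmlessly and no clique $W$ is produced. The edge-addition trick (Lemma \ref{UnequalColoredPairOrCliqueMinusEdge}) exploits the hypothesis ``every coloring makes $x_1,x_2$ \emph{equal}''; you have derived the opposite. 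The operation that exploits ``every coloring separates them'' is identification, $\frac{G-v_0}{\brackets{x_1,x_2}}$, but there the merged vertex has degree $\card{N(x_1)\cup N(x_2)}-1$, which can be as large as $k+1$ when $d(x_1)=d(x_2)=k$, so Lemma \ref{EpiPower} need not apply. Everything downstream of $W$ in your first phase, and the ``same trick'' invoked in the second phase, inherits this error. (The fact you are trying to prove there --- $d(v)=k$ for all $v\in K$ --- is true, but the paper gets it cheaply: color $G-V(H)$ entirely, give $x_1,x_2$ a common color via $\card{L(x_1)}+\card{L(x_2)}\geq 2k-6>k-1$, and observe that any $v\in K$ with $d(v)\leq k-1$ then has a surplus list and Hall's Theorem finishes.)

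Beyond that, the second phase is not a proof: ``a suitable modification of $G-v_1$'' and ``combined with the structure already built'' are placeholders, and you assert without justification that $x_1$ and $v_1'$ are nonadjacent. More importantly, you never confront the case that actually carries the difficulty in the paper's proof: after the external neighborhood $N$ of $K$ is shown to be an independent set lying in one color class, the subcase $\card{N}=2$ forces $k=6$ and launches a chain of copies of $\join{E_2}{K_4}$ that must close into a cycle; the contradiction there is that the resulting graph is $5$-colorable (color all the $E_2$'s alike, then $4$-color the leftover $K_4$'s). Your one-line ``iterating the argument \ldots terminates in finitely many steps'' gestures at this but gives no reason why termination is contradictory, which is precisely the content needed.
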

\begin{proof}
Suppose $G$ is a $k$-mule containing an induced copy of $\join{E_2}{K_{k - 2}}$, call it $F$.  
Let $\{x_1, y_1\}$ be the set of vertices of degree $k-2$ in $F$ and let $C \DefinedAs \set{w_1,
\ldots, w_{k-2}}$ be the set of vertices of degree $k-1$ in $F$.  Let $H \DefinedAs G -
F$. Since $G$ is vertex critical, we can $(k-1)$-color $H$.  Doing so leaves a
list assignment $L$ on $F$ with $\card{L(z)} \geq d_F(z) - 1$ for each $z \in
V(F)$; as defined above, this is $L_{\pi}$, where $t=k-1$.
Now $\card{L(x_1)} + \card{L(y_1)} \geq d_F(x_1) + d_F(y_1) - 2 = 2k - 6 > k -
1$, since $k \geq 6$.  Hence we have $c \in L(x_1) \cap L(y_1)$.  
Coloring both $x_1$ and $y_1$ with $c$ leaves a list assignment $L'$ on $C$
with $\card{L'(w_i)} \geq k - 3$ for each $1 \leq i \leq k-2$.  
Now, if $\card{L'(w_i)} \geq k - 2$ for some $i$ or if $L'(w_i) \neq L'(w_j)$
for some $i$ and $j$, then we can complete the partial $(k - 1)$-coloring to
all of $G$ using Hall's Theorem.  Hence we must have $d(w_i) = k$ and $L'(w_i)
= L'(w_j)$ for all $i$ and $j$.  Let $N \DefinedAs \bigcup_{w \in C} N(w) \cap
V(H)$ and note that $N$ is an independent set since it is contained in a single
color class in every $(k - 1)$-coloring of $H$. Also, each $w \in C$ has
exactly one neighbor in $N$.

Proving that $\card{N} = 1$ will give the desired induced $\join{E_3}{K_{k -
2}}$ in $G$; if instead this subgraph is not induced, then the vertex in $N$ is
adjacent to $x_1$ or $y_1$, so $G$ contains $K_k$, and is not a mule.  Thus, to
reach a contradiction, suppose that $\card{N} \geq 2$.  

We will repeatedly find copies of $\join{E_2}{K_{k-2}}$ in $G$; when $x$ and $y$
are the vertices in $E_2$, we will write $K(x,y)$ to denote the corresponding
copy of $K_{k-2}$.
We know that $H$ has no $(k - 1)$-coloring in which two vertices of $N$ get
different colors, since then we could complete the partial coloring as above. 
Let $x_2, y_2 \in N$ be distinct. Since both $x_2$ and $y_2$ have a neighbor
in $F$, we may apply Lemma \ref{UnequalColoredPairOrCliqueMinusEdge} to conclude 
that $\join{\set{x_2, y_2}}{K(x_2, y_2)}$ is in $H$. 

First, suppose $\card{N} \geq 3$; say $\set{x_2, y_2, z_2}\subseteq N$.  We
have $w \in K(x_2, y_2) \cap K(x_2, z_2)$ for otherwise $d(x_2) \geq 2(k - 2)
> k$.  Since $w$ already has $k$ neighbors among $K(x_2, y_2) - \set{w}$ and
$x_2, y_2, z_2$, we must have $K(x_2, z_2) = K(x_2, y_2)$.  But then
$\set{x_2, y_2, z_2} \cup V(K(x_2, y_2))$ induces our desired $\join{E_3}{K_{k
- 2}}$ in $G$.

Hence we must have $\card{N} = 2$, say $N = \set{x_2, y_2}$.  Now each of $x_2$
and $y_2$ has $k - 2$ neighbors in $K(x_2, y_2)$ and thus at most two neighbors
in $C$.  Hence $\card{C} \leq 4$.  Thus, we must have $k = 6$.

We may apply the same reasoning to $\join{\set{x_2, y_2}}{K(x_2, y_2)}$ that we
did to $F$ to get vertices $x_3, y_3$ 
such that $\join{\set{x_3, y_3}}{K(x_3, y_3)}$ is in $G$.  But then we may do it again with $\join{\set{x_3, y_3}}{K(x_3, y_3)}$ and so on.  
More formally, we find a sequence of nonadjacent pairs of vertices $\{x_1,y_1\},
\{x_2,y_2\},\ldots$; for each pair $\{x_i,y_i\}$ we
find $K(x_i,y_i)$, a $K_4$ that is joined to $\{x_i,y_i\}$, such that
$x_{i+1}$ and $y_{i+1}$ each have two neighbors in $K(x_i,y_i)$.
Since $G$ is finite, this process must terminate, and therefore
$\{x_1,y_1\}=\{x_m,y_m\}$ for some $m\ge 2$. 
This graph is $5$-colorable since $\{x_1,y_1,x_2,y_2,\ldots,x_m,y_m\}$ can be
colored with one color and $\bigcup_{i=1}^{m-1}K(x_i,y_i)$ can be 4-colored.
This final contradiction completes the proof.
\end{proof}

The proof of the following lemma forces the construction of counterexamples to
the Borodin--Kostochka conjecture for $\Delta = 6$ and $\Delta = 7$.  The
first of these is $M_{6,1}$ (Figure \ref{fig:M_61}), which is created from the
disjoint union of $\join{K_4}{E_3}$ and $K_5$ by adding six edges between the
$E_3$ and $K_5$ such that each vertex in the $E_3$ is incident to exactly two
of the edges and each vertex in the $K_5$ is incident to at least one
of them.  Note that we get another counterexample (not shown) for $\Delta = 6$
if we instead add only five edges, but this counterexample is not a mule
(precisely since we can add an edge and get another counterexample). The second
counterexample is $M_{7,1}$ (Figure \ref{fig:M_7}) which is created from the
disjoint union of $\join{K_5}{E_3}$ and $K_6$ by adding six edges between the
$E_3$ and $K_6$ such that each vertex in the $E_3$ is incident to exactly two
of the edges and each vertex in the $K_6$ is incident to exactly one of the
edges.

\begin{figure}[htb]
\centering
\begin{tikzpicture}[scale = 14]
\tikzstyle{VertexStyle}=[shape = circle,	
								 minimum size = 1pt,
								 inner sep = 3pt,
                         draw]
\Vertex[x = 0.25085711479187, y = 0.92838092893362, L = \tiny {}]{v0}
\Vertex[x = 0.0932380929589272, y = 0.929142817854881, L = \tiny {}]{v1}
\Vertex[x = 0.250571489334106, y = 0.781142801046371, L = \tiny {}]{v2}
\Vertex[x = 0.092571459710598, y = 0.781142845749855, L = \tiny {}]{v3}
\Vertex[x = 0.355238050222397, y = 0.889142841100693, L = \tiny {}]{v4}
\Vertex[x = 0.353904783725739, y = 0.853142827749252, L = \tiny {}]{v5}
\Vertex[x = 0.353238135576248, y = 0.818476170301437, L = \tiny {}]{v6}
\Vertex[x = 0.476000010967255, y = 0.968571435660124, L = \tiny {}]{v7}
\Vertex[x = 0.537999987602234, y = 0.853238105773926, L = \tiny {}]{v8}
\Vertex[x = 0.444666564464569, y = 0.77590474486351, L = \tiny {}]{v9}
\Vertex[x = 0.475333333015442, y = 0.731904745101929, L = \tiny {}]{v10}
\Vertex[x = 0.451999962329865, y = 0.916571423411369, L = \tiny {}]{v11}
\Edge[](v0)(v1)
\Edge[](v2)(v1)
\Edge[](v3)(v1)
\Edge[](v0)(v3)
\Edge[](v2)(v3)
\Edge[](v2)(v0)
\Edge[](v4)(v2)
\Edge[](v5)(v2)
\Edge[](v6)(v2)
\Edge[](v4)(v0)
\Edge[](v5)(v0)
\Edge[](v6)(v0)
\Edge[](v4)(v1)
\Edge[](v5)(v1)
\Edge[](v6)(v1)
\Edge[](v4)(v3)
\Edge[](v5)(v3)
\Edge[](v6)(v3)
\Edge[](v8)(v7)
\Edge[](v9)(v7)
\Edge[](v10)(v7)
\Edge[](v11)(v7)
\Edge[](v9)(v8)
\Edge[](v10)(v8)
\Edge[](v11)(v8)
\Edge[](v10)(v9)
\Edge[](v11)(v9)
\Edge[](v11)(v10)
\Edge[](v4)(v7)
\Edge[](v4)(v11)
\Edge[](v6)(v9)
\Edge[](v6)(v10)
\Edge[](v5)(v8)
\Edge[](v5)(v9)
\end{tikzpicture}
\caption{The mule $M_{6,1}$.}
\label{fig:M_61}
\end{figure}

\begin{figure}[htb]
\centering
\begin{tikzpicture}[scale = 10]
\tikzstyle{VertexStyle}=[shape = circle,	
								 minimum size = 1pt,
								 inner sep = 3pt,
                         draw]
\Vertex[x = 0.751999914646149, y = 0.724000096321106, L = \tiny {}]{v0}
\Vertex[x = 0.751999974250793, y = 0.590000092983246, L = \tiny {}]{v1}
\Vertex[x = 0.652000069618225, y = 0.38400000333786, L = \tiny {}]{v2}
\Vertex[x = 0.578000009059906, y = 0.51800012588501, L = \tiny {}]{v3}
\Vertex[x = 0.572000086307526, y = 0.808000028133392, L = \tiny {}]{v4}
\Vertex[x = 0.0419999808073044, y = 0.742000013589859, L = \tiny {}]{v5}
\Vertex[x = 0.0399999916553497, y = 0.612000048160553, L = \tiny {}]{v6}
\Vertex[x = 0.163999989628792, y = 0.569999992847443, L = \tiny {}]{v7}
\Vertex[x = 0.25600004196167, y = 0.676000028848648, L = \tiny {}]{v8}
\Vertex[x = 0.159999996423721, y = 0.782000005245209, L = \tiny {}]{v9}
\Vertex[x = 0.653999924659729, y = 0.921999998390675, L = \tiny {}]{v10}
\Vertex[x = 0.379999995231628, y = 0.771999999880791, L = \tiny {}]{v11}
\Vertex[x = 0.381999999284744, y = 0.682000011205673, L = \tiny {}]{v12}
\Vertex[x = 0.386000007390976, y = 0.592000007629395, L = \tiny {}]{v13}
\Edge[](v0)(v4)
\Edge[](v1)(v4)
\Edge[](v2)(v4)
\Edge[](v3)(v4)
\Edge[](v0)(v3)
\Edge[](v1)(v3)
\Edge[](v2)(v3)
\Edge[](v0)(v2)
\Edge[](v1)(v2)
\Edge[](v0)(v1)
\Edge[](v5)(v6)
\Edge[](v5)(v7)
\Edge[](v5)(v8)
\Edge[](v5)(v9)
\Edge[](v6)(v7)
\Edge[](v6)(v8)
\Edge[](v6)(v9)
\Edge[](v7)(v8)
\Edge[](v7)(v9)
\Edge[](v8)(v9)
\Edge[](v0)(v10)
\Edge[](v1)(v10)
\Edge[](v2)(v10)
\Edge[](v3)(v10)
\Edge[](v4)(v10)
\Edge[](v5)(v11)
\Edge[](v6)(v11)
\Edge[](v7)(v11)
\Edge[](v8)(v11)
\Edge[](v9)(v11)
\Edge[](v5)(v12)
\Edge[](v6)(v12)
\Edge[](v7)(v12)
\Edge[](v8)(v12)
\Edge[](v9)(v12)
\Edge[](v5)(v13)
\Edge[](v6)(v13)
\Edge[](v7)(v13)
\Edge[](v8)(v13)
\Edge[](v9)(v13)
\Edge[](v11)(v10)
\Edge[](v11)(v4)
\Edge[](v12)(v0)
\Edge[](v12)(v1)
\Edge[](v13)(v3)
\Edge[](v13)(v2)
\end{tikzpicture}
\caption{The mule $M_{7,1}$.}
\label{fig:M_7}
\label{fig:M_71}
\end{figure}

\begin{lem}\label{NoE2}
For $k \geq 6$, the only $k$-mules containing an induced copy of $\join{E_2}{K_{k - 2}}$
are $M_{6,1}$ and $M_{7,1}$ (see Figures~\ref{fig:M_61} and \ref{fig:M_7}).
\end{lem}
\begin{proof}
Suppose we have a $k$-mule $G$ that contains an induced copy of $\join{E_2}{K_{k - 2}}$. 
Then by Lemma \ref{E2impliesE3}, $G$ contains an induced copy of $\join{E_3}{K_{k - 2}}$, call it $F$. 

Let $x, y, z$ be the vertices of degree $k-2$ in
$F$ and let $C \DefinedAs \set{w_1, \ldots, w_{k-2}}$ be the set of vertices of degree
$k$ in $F$. Put $H \DefinedAs G - C$.  
Let $A$ be the graph formed from $H$ by contracting $\set{x,y,z}$ to a single vertex $v_{xyz}$.
Since each of $x$, $y$, and $z$ have degree at most $2$ in $H$, we have $\Delta(A) \le
\max(6,k) \le k$.  We have an epimorphism $\funcsurj{f}{H}{A}$ given by $f(v) = v$ for $v
\not \in \set{x,y,z}$ and $f(v) = v_{xyz}$ for $v \in \set{x,y,z}$ and hence $A
\prec G$.  But $G$ is a mule, so $\chi(A) \le k-1$ or $\omega(A) \ge k$ by Lemma
\ref{EpiPower}.  If $\chi(A) \le k-1$, then we have a $(k-1)$-coloring of $H$
where $x$, $y$, and $z$ all get the same color, which we can greedily extend to
$C$, a contradiction.  So, we must have $\omega(A) \ge k$.  

Since $\omega(G) \le k-1$, the $k$-clique in A must contain vertex $v_{xyz}$,
since it is the only vertex of $A$ not appearing in $G$.  However $d(xyz) \le 6$.  
So $k \le 7$.
%
Moreover, $H$ contains $K_{k-1}$ (call it $D$) such
that $V(D) \subseteq N(x) \cup N(y) \cup N(z)$.  Put $Q \DefinedAs
G\brackets{V(F) \cup V(D)}$.  Then $Q$ is
$k$-chromatic and as $G$ is vertex critical, we must have $G = Q$.  If $k = 7$, then $G
= M_{7,1}$.  Suppose $k=6$ and $G \neq M_{6,1}$. Then one of $x$, $y$, or $z$
has only one neighbor in $D$.  By symmetry we may assume it is $x$.  But we can
add an edge from $x$ to a vertex in $D$ to form $M_{6,1}$ and hence $G$ has a
proper child, which is impossible.
\end{proof}

\begin{lem}\label{UnequalColoredPair}
Let $G$ be a $k$-mule with $k \geq 6$ other than $M_{6,1}$ and $M_{7,1}$ and let
$H \lhd G$. If $x, y \in V(H)$ and both $d_H(x) \leq k-1$ and $d_H(y) \leq k-1$, then there exists a $(k - 1)$-coloring $\pi$ of $H$ such that $\pi(x) \neq \pi(y)$.
\end{lem}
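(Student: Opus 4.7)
The plan is a short argument by contradiction that combines Lemmas \ref{UnequalColoredPairOrCliqueMinusEdge} and \ref{NoE2}. Suppose toward contradiction that no $(k-1)$-coloring of $H$ distinguishes $x$ and $y$. First I would note that $H$ does admit at least one $(k-1)$-coloring: since $G$ is vertex critical with $\chi(G)=k$ and $H$ is a proper induced subgraph, we have $H \unlhd G-v$ for any $v \in V(G) \setminus V(H)$, so $\chi(H) \leq \chi(G-v) \leq k-1$. Thus the assumption reduces to the statement that \emph{every} $(k-1)$-coloring $\pi$ of $H$ satisfies $\pi(x)=\pi(y)$.

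If $xy \in E(H)$ then any proper coloring automatically gives $\pi(x)\neq \pi(y)$, contradicting the assumption. So I may assume $xy \notin E(H)$. Combined with the degree bounds $d_H(x), d_H(y) \leq k-1$, this puts us in position to invoke Lemma \ref{UnequalColoredPairOrCliqueMinusEdge}, which produces a $K_{k-2}$ inside $H$ whose every vertex is adjacent in $H$ to both $x$ and $y$; equivalently, $H$ contains $\join{\set{x,y}}{K_{k-2}}$ as a subgraph.

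The concluding step is to upgrade this to an \emph{induced} $\join{E_2}{K_{k-2}}$ in $G$ and then apply Lemma \ref{NoE2}. Because $H$ is induced in $G$, the induced subgraph $G[\set{x,y}\cup V(K_{k-2})]$ agrees with $H[\set{x,y}\cup V(K_{k-2})]$; the latter contains all edges of $\join{\set{x,y}}{K_{k-2}}$, and the only edge that could spoil the induced structure is $xy$, which is absent by hypothesis. Hence $G$ contains an induced $\join{E_2}{K_{k-2}}$, and Lemma \ref{NoE2} forces $G$ to be either $M_{6,1}$ or $M_{7,1}$, contradicting the hypothesis that $G$ is neither.

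I do not anticipate any real obstacle; the lemma is a direct synthesis of the two preceding results. The only point worth verifying carefully is that the $\join{E_2}{K_{k-2}}$ we produce is \emph{induced} in $G$ rather than merely a subgraph, but this reduces immediately to the two facts that $H$ is induced in $G$ and that $xy \notin E(H)$.
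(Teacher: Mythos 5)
Your proof is correct and follows the paper's argument exactly: handle the adjacent case trivially, invoke Lemma \ref{UnequalColoredPairOrCliqueMinusEdge} to extract $\join{\set{x,y}}{K_{k-2}}$, and contradict Lemma \ref{NoE2}. Your extra verification that the resulting $\join{E_2}{K_{k-2}}$ is induced in $G$ (using $xy\notin E(H)$ and $H$ induced in $G$) is a detail the paper leaves implicit, and it is handled correctly.
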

\begin{proof}
Suppose $x, y \in V(H)$ and both $d_H(x) \leq k-1$ and $d_H(y) \leq k-1$.  
First, if $xy \in E(H)$ then any $(k - 1)$-coloring of $H$ will do.  
Otherwise, if for every $(k - 1)$-coloring $\pi$ of $H$ we have $\pi(x) = \pi(y)$, then by Lemma \ref{UnequalColoredPairOrCliqueMinusEdge}, 
$H$ contains $\join{\set{x, y}}{K_{k-2}}$.  The lemma follows since this is impossible by Lemma \ref{NoE2}.
\end{proof}

\begin{lem}\label{JoinerOrDifferentLists}
Let $G$ be a $k$-mule with $k \geq 6$ other than $M_{6,1}$ and $M_{7,1}$ and let $F \lhd G$.  
Put $C \DefinedAs \setb{v}{V(F)}{d(v) - d_F(v) \leq 1}$.  At least one of the following holds:
\begin{itemize}
\item $G - F$ has a $(k - 1)$-coloring $\pi$ such that for some $x, y \in C$ we have $L_{\pi}(x) \neq L_{\pi}(y)$; or
\item $G - F$ has a $(k - 1)$-coloring $\pi$ such that for some $x \in C$ we have $\card{L_{\pi}(x)} = k - 1$; or
\item there exists $z \in V(G - F)$ such that $C \subseteq N(z)$.
\end{itemize}
\end{lem}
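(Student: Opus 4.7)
The plan is to assume that none of the three conclusions holds and derive a contradiction via Lemma~\ref{UnequalColoredPair}. Since $G$ is a $k$-mule it is vertex critical with $\chi = k$, so the proper induced subgraph $G - F$ has a $(k-1)$-coloring $\pi$. For each $x \in C$, the hypothesis $d(x) - d_F(x) \leq 1$ gives $|N(x) \cap V(G-F)| \leq 1$, and therefore $|L_{\pi}(x)| \geq k-2$ with equality precisely when $x$ has a (necessarily unique) neighbor in $G-F$.

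Assuming conclusion~(2) fails, for every $(k-1)$-coloring $\pi$ of $G-F$ and every $x \in C$ we must have $|L_\pi(x)| < k-1$, hence $|L_\pi(x)| = k-2$. In particular each $x \in C$ has a unique neighbor $x' \in V(G-F)$ and $L_\pi(x) = \{1,\ldots,k-1\} \setminus \{\pi(x')\}$. Assuming conclusion~(1) also fails, for every $(k-1)$-coloring $\pi$ of $G-F$ and all $x,y \in C$ we have $L_\pi(x) = L_\pi(y)$, equivalently $\pi(x') = \pi(y')$; so the set $N' \DefinedAs \{x' : x \in C\}$ is monochromatic in every $(k-1)$-coloring of $G-F$.

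Now invoke the failure of~(3): no single vertex $z \in V(G-F)$ is adjacent to all of $C$, so $|N'| \geq 2$. Pick distinct $v_1, v_2 \in N'$ with $v_i = x_i'$ for some $x_i \in C$. Each $v_i$ already has the neighbor $x_i \in V(F)$, and $d_G(v_i) \leq k$, so $d_{G-F}(v_i) \leq k-1$. Therefore Lemma~\ref{UnequalColoredPair}, applied to $H = G - F$ (which is a proper induced subgraph of $G$, and $G \neq M_{6,1}, M_{7,1}$ by hypothesis), produces a $(k-1)$-coloring $\pi$ of $G-F$ with $\pi(v_1) \neq \pi(v_2)$. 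But then $L_\pi(x_1) \neq L_\pi(x_2)$, contradicting the failure of~(1).

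The only subtle point is checking that Lemma~\ref{UnequalColoredPair} applies, which requires $d_{G-F}(v_i) \leq k-1$; this is precisely where we use that each $v_i$ has at least one neighbor (namely $x_i$) inside $F$. Everything else is bookkeeping about which colors remain on vertices of $C$ after a $(k-1)$-coloring of $G-F$.
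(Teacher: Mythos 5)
Your proof is correct and follows essentially the same route as the paper's: negate all three conclusions, deduce that every vertex of $C$ has exactly one neighbor in $G-F$ and that these neighbors form a set of size at least two, then apply Lemma~\ref{UnequalColoredPair} to two of them to produce a coloring contradicting the failure of the first conclusion. Your explicit check that $d_{G-F}(v_i)\le k-1$ (via the neighbor $x_i$ in $F$) is a detail the paper leaves implicit, but the argument is the same.
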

\begin{proof}
Put $H \DefinedAs G - F$.  
Suppose that for every $(k - 1)$-coloring $\pi$ of $H$ we have $L_{\pi}(x) = L_{\pi}(y)$ for every $x, y \in C$.  
By assumption, the vertices in $C$ have at most one neighbor in $H$.  
If some $v \in C$ has no neighbors in $H$, then for any $(k - 1)$-coloring $\pi$ of $H$ we have $\card{L_{\pi}(v)} = k - 1$.  
Thus we may assume that every $v \in C$ has exactly one neighbor in $H$. 

Let $N \DefinedAs \bigcup_{w \in C} N(w) \cap V(H)$. Suppose $\card{N} \geq 2$.
Pick different $z_1, z_2 \in N$. Then, by Lemma \ref{UnequalColoredPair}, there is a $(k - 1)$-coloring $\pi$ of $H$ for which $\pi(z_1) \neq \pi(z_2)$.  
But then $L_{\pi}(x) \neq L_{\pi}(y)$ for some $x, y \in C$ giving a contradiction.  Hence $N = \set{z}$ and thus $C \subseteq N(z)$.
\end{proof}

What follows next depends on Lemma~\ref{K_tClassification}, so we need another
definition used there.  (Recall also that a graph $H$ is \emph{almost complete}
if $\omega(H)\ge |V(H)|-1$.)  A graph is $d_1$-choosable if it has a proper
$L$-coloring for every list assignment $L$ such that $\card{L(v)}=d(v)-1$.
At the start of Section \ref{ListColoringSection}, we define more terms related
to list-coloring.
By Lemma \ref{K_tClassification}, no graph in $\C{k}$ contains an induced copy of
$\join{E_3}{K_{k - 3}}$ for $k \geq 9$.  
For mules, we can improve this as follows.

\begin{lem}\label{NoE3}
For $k \geq 7$, the only $k$-mule containing an induced copy of $\join{E_3}{K_{k - 3}}$ is $M_{7,1}$.
\end{lem}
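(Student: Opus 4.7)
The plan is to suppose $G$ is a $k$-mule with $k \geq 7$ and $G \neq M_{7,1}$ that contains an induced copy $F$ of $\join{E_3}{K_{k-3}}$, and derive a contradiction. Denote the $E_3$ vertices by $x, y, z$ and the $K_{k-3}$ vertices by $w_1, \ldots, w_{k-3}$; then $d_F(x) = d_F(y) = d_F(z) = k-3$ and $d_F(w_i) = k-1$, so each $w_i$ has at most one neighbor outside $F$.

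First I apply Lemma \ref{JoinerOrDifferentLists} to $F$, taking $C \DefinedAs \setb{v}{V(F)}{d(v) - d_F(v) \leq 1}$ and noting $\set{w_1, \ldots, w_{k-3}} \subseteq C$. In cases (a) and (b) of that lemma, I argue that the $(k-1)$-coloring $\pi$ of $H \DefinedAs G - F$ extends to $G$: choose a common color $c \in L_\pi(x) \cap L_\pi(y) \cap L_\pi(z)$ (using the slack from case (a) or (b) combined with Lemma \ref{UnequalColoredPair} to select a suitable $\pi$), color $x, y, z$ with $c$, then list-color the clique $\set{w_1, \ldots, w_{k-3}}$ via Hall's theorem from residual lists of size $\geq k-3$. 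This contradicts $\chi(G) = k$, so we land in case (c): there is $z' \in V(G - F)$ adjacent to all of $C$, hence to every $w_i$. Consequently $\set{z', w_1, \ldots, w_{k-3}}$ forms a $K_{k-2}$, every $w_i$ has $d_G(w_i) = k$, and $N(w_i) = \set{w_j : j \neq i} \cup \set{x, y, z, z'}$.

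Next I use a pair argument: for any non-adjacent pair $u, v \in V(G)$ with $d_G(u), d_G(v) \leq k-1$, the child $G + uv$ has $\Delta \leq k$, so by Lemma \ref{EpiPower} it is either $(k-1)$-colorable (contradicting $\chi(G) = k$) or contains a $K_k$; since $\omega(G) < k$, the $K_k$ must use $uv$, producing an induced $\join{E_2}{K_{k-2}}$ in $G$ and forcing $G \in \set{M_{6,1}, M_{7,1}}$ by Lemma \ref{NoE2}, a contradiction. Hence any two non-adjacent vertices of $G$ include at least one of degree $k$. Applying this to pairs within $\set{x, y, z}$ forces at least two of them to have $d_G = k$. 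Now let $\alpha \DefinedAs \card{N(z') \cap \set{x, y, z}}$. If $\alpha \geq 2$, two non-adjacent vertices of $\set{x, y, z}$ join the clique $\set{z', w_1, \ldots, w_{k-3}}$, exhibiting an induced $\join{E_2}{K_{k-2}}$ and again contradicting Lemma \ref{NoE2}.

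The main obstacle is the remaining case $\alpha \leq 1$. Here, without loss of generality $y, z \not\sim z'$, so $\set{y, z, z', w_1, \ldots, w_{k-3}}$ induces another $\join{E_3}{K_{k-3}}$, call it $F'$. The pair argument applied to the (almost) independent set $\set{x, y, z, z'}$ implies that the low-degree vertices of $\set{x, y, z, z'}$ form a clique in $G$, so at least three of them have $d_G = k$ when $\alpha = 0$ and at least two when $\alpha = 1$. I will then identify $y$ and $z$ in the subgraph $G - Y$, where $Y$ is a carefully chosen set of outside-$F$ neighbors of $y$ and $z$ sized so that the identified vertex $v^{*}$ has degree exactly $k$; Lemma \ref{EpiPower} applied to this child either produces a $K_k$ or yields a $(k-1)$-coloring. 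A $K_k$ would require a $K_{k-1}$ inside $N_{G-Y}(y) \cup N_{G-Y}(z)$, which is impossible because the fixed neighborhoods $N(w_i) = \set{w_j : j \neq i} \cup \set{x, y, z, z'}$ prevent outside neighbors of $y$ or $z$ from being adjacent to any $w_i$; so we obtain a $(k-1)$-coloring $\sigma$ which, with $\sigma(y) = \sigma(z)$, extends to $G$ via a short list-coloring of $G[Y]$ (feasible because the degree-$k$ constraints from the iterated pair argument leave enough unused colors on the neighborhoods of $Y$'s vertices), producing the final contradiction.
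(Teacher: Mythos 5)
Your skeleton for the first half matches the paper's (apply Lemma~\ref{JoinerOrDifferentLists}, dispose of cases (a) and (b) by list coloring, deduce a joiner $z'$), and your $\alpha\ge 2$ subcase is a correct direct appeal to Lemma~\ref{NoE2}. But there are two genuine gaps. First, in cases (a) and (b) you color all three of $x,y,z$ with a color in $L_\pi(x)\cap L_\pi(y)\cap L_\pi(z)$. Such a color need not exist: each list has size at least $k-4$ and the pot has size at most $k-1$, so pigeonhole gives a triple common color only when $3(k-4)>2(k-1)$, i.e.\ $k\ge 11$; for $k=7$ the lists could be $\set{1,2,3},\set{4,5,6},\set{1,2,4}$, and neither the "slack" from cases (a)/(b) (which concerns the lists on $C$, not on the $E_3$) nor Lemma~\ref{UnequalColoredPair} repairs this. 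The correct move is weaker: $3(k-4)>k-1$ guarantees that \emph{some pair} of the three lists meet; color that pair with a common color, and then the remaining $K_{k-2}$ (namely $C$ plus the third $E_3$-vertex) has lists of size $\ge k-3$ (resp.\ $\ge k-4$), which Hall's theorem handles precisely because of the case (a)/(b) guarantee that two lists on $C$ differ or one has size $\ge k-2$.

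Second, your treatment of $\alpha\le 1$ is both incomplete and unnecessary. Incomplete: your exclusion of a $K_k$ in the identified child only rules out cliques meeting $C$ — a $K_{k-1}$ inside $N(y)\cup N(z)$ could in principle be $C\cup\set{x,z'}$, or (when $k=7$) avoid $C$ entirely, living among the at most six outside neighbors of $y$ and $z$ — and the final extension of $\sigma$ to $G[Y]$, which is the crux of the whole construction, is asserted rather than proved; nothing you have established guarantees that each vertex of $Y$ retains a free color. Unnecessary: once $z'$ is joined to $C$ and has at most one neighbor in $\set{x,y,z}$, the set $C\cup\set{x,y,z,z'}$ induces $\join{K_{k-3}}{B}$ with $\card{B}=4$, where $B$ is $E_4$ or $\djunion{K_2}{E_2}$ — not almost complete, not $E_3$, and not a claw. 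Since $k-3\ge 4$, Lemma~\ref{K_tClassification} says this join is $d_1$-choosable, and a mule contains no induced $d_1$-choosable subgraph. This one-line finish is essentially the paper's own: it applies Lemma~\ref{K_tClassification} to $\join{K_{k-3}}{G[z_1,z_2,z_3,x]}$, concludes $k=7$ with the claw (your $\alpha=3$) as the only surviving configuration, and kills that with Lemma~\ref{NoE2}.
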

\begin{proof}
Suppose the lemma is false and let $G$ be a $k$-mule, other than $M_{7,1}$, containing such an induced subgraph $F$.  
Let $z_1, z_2, z_3 \in F$ be the vertices with degree $k-3$ in $F$ and $C$ the rest of the vertices in $F$ (all of degree $k-1$ in $F$). 
Put $H \DefinedAs G - F$.

First suppose there is not a vertex $x \in V(H)$ which is adjacent to all of $C$. 
Let $\pi$ be a $(k - 1)$-coloring of $H$ guaranteed by Lemma
\ref{JoinerOrDifferentLists} and put $L \DefinedAs L_\pi$.  Since $\card{L(z_1)}
+ \card{L(z_2)} + \card{L(z_3)} \geq 3(k-4) > k - 1$ we have $1 \leq i < j \leq 3$ such that $L(z_i) \cap L(z_j) \neq \emptyset$.  
Without loss of generality, $i = 1$ and $j = 2$. Pick $c \in L(z_1) \cap L(z_2)$ and color both $z_1$ and $z_2$ with $c$.  
Let $L'$ be the resulting list assignment on $F - \set{z_1, z_2}$.  
Now $\card{L'(z_3)} \geq k-4$ and $\card{L'(v)} \geq k-3$ for each $v \in C$.  
By our choice of $\pi$, either two of the lists in $C$ differ or for some $v \in C$ we have $\card{L'(v)} \geq k-2$.  
In either case, we can complete the $(k - 1)$-coloring to all of $G$ by Hall's Theorem.

Hence we must have $x \in V(H)$ which is adjacent to all of $C$.  
Thus $G$ contains the induced subgraph $\join{K_{k-3}}{G[z_1, z_2, z_3, x]}$.  
Therefore $k = 7$ and $x$ is adjacent to each of $z_1, z_2, z_3$ by Lemma \ref{K_tClassification}.  
Hence $G$ contains the induced subgraph $\join{K_5}{E_3}$ contradicting Lemma \ref{NoE2}.
\end{proof}

\begin{lem}\label{NoTwooks}
For $k \geq 7$, no $k$-mule contains an induced copy of $\join{\overline{P_3}}{K_{k - 3}}$.
\end{lem}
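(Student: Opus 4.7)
I would adapt the approach of Lemma~\ref{NoE3}. Suppose $G$ is a $k$-mule with $k\ge 7$ containing an induced $F=\join{\overline{P_3}}{K_{k-3}}$; label the $\overline{P_3}$-vertices $x,y,z$ so that $xz\in E(G)$ and $y$ is the isolated one, let $C=\{w_1,\ldots,w_{k-3}\}$ be the $K_{k-3}$-vertices, and set $H=G-F$. For $k\ge 8$, the small exceptional mules $M_{6,1},M_{7,1}$ do not apply. For $k=7$, I verify by direct inspection of Figure~\ref{fig:M_7} that $M_{7,1}$ has no induced $\join{\overline{P_3}}{K_4}$: no edge-plus-isolated triple in $M_{7,1}$ has four common neighbors spanning a clique. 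Thus I may assume $G\notin\{M_{6,1},M_{7,1}\}$ and invoke the strong forms of Lemmas~\ref{NoE2}, \ref{UnequalColoredPair}, \ref{JoinerOrDifferentLists}, and \ref{NoE3}.

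Next, apply Lemma~\ref{JoinerOrDifferentLists} to $F$ (its defined set $C^*$ always contains $C$). In the two list-flexibility outputs we get a $(k-1)$-coloring $\pi$ of $H$ whose lists on $F$ satisfy $|L_\pi(x)|,|L_\pi(z)|\ge k-3$, $|L_\pi(y)|\ge k-4$, $|L_\pi(w_i)|\ge k-2$, and either two lists in $C^*$ differ or some list in $C^*$ has size $k-1$. Exploiting that $y$ is non-adjacent to both $x$ and $z$, I pick a common color $c\in L_\pi(x)\cap L_\pi(y)$, which is nonempty since the intersection has size $\ge (k-3)+(k-4)-(k-1)=k-6\ge 1$, and color $x,y$ with $c$; then color $z$ with some $c'\in L_\pi(z)\setminus\{c\}$ (size $\ge k-4\ge 3$); and finally list-color the clique $C$, whose lists after removal of $\{c,c'\}$ have size $\ge k-4$ while $|C|=k-3$. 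Choosing $c$ and $c'$ carefully---switching to the pairing $(y,z)$ if the pairing $(y,x)$ exhausts the room---provides enough slack to preserve the distinguishing property in $C$, and Hall's theorem completes the coloring to contradict $\chi(G)=k$.

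The remaining output of the lemma produces a vertex $v\in V(H)$ adjacent to every vertex of $C^*$, so $G$ contains $\join{G[\{x,y,z,v\}]}{K_{k-3}}$ as an induced subgraph. I case-split on the $4$-vertex graph $G[\{x,y,z,v\}]$, which contains edge $xz$ and non-edges $xy,yz$. If it has an induced $E_3$, then $G$ contains induced $\join{E_3}{K_{k-3}}$, contradicting Lemma~\ref{NoE3}. If $vx,vz\in E$, then $\{x,z,v\}\cup C$ is a $K_k$ in $G$, contradicting $\omega(G)<k$. If $vy\in E$ together with exactly one of $vx,vz\in E$, then $\{v\}\cup C$ is a $K_{k-2}$ joined by two non-adjacent vertices from $\{x,y,z\}$, giving induced $\join{E_2}{K_{k-2}}$ and contradicting Lemma~\ref{NoE2}. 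The only surviving sub-case is $vy\in E$ and $vx,vz\notin E$, so $G[\{x,y,z,v\}]=2K_2$ with edges $xz$ and $yv$, and $G$ contains induced $\join{2K_2}{K_{k-3}}$.

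This last sub-case I handle by a direct list-coloring argument. Let $S=\{x,y,z,v\}\cup C$; since $d_S(w_i)=k=\Delta(G)$, each $w_i$ has no neighbor outside $S$, so after $(k-1)$-coloring $G-S$ the induced lists satisfy $|L(w_i)|=k-1$ and $|L(x)|,|L(y)|,|L(z)|,|L(v)|\ge k-3$. I use the two non-edges of the $2K_2$: pick $c_1\in L(x)\cap L(y)$ of size $\ge 2(k-3)-(k-1)=k-5\ge 2$, then $c_2\in (L(z)\cap L(v))\setminus\{c_1\}$ of size $\ge k-6\ge 1$; color $x,y$ with $c_1$ and $z,v$ with $c_2$. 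Each $w_i$ then has a remaining list of size $\ge k-3=|C|$, so the clique $C$ colors greedily, yielding a $(k-1)$-coloring of $G$---the final contradiction. The main obstacle I foresee is the first list-extension step in the tight case $k=7$, where the intersection slack is only $1$; there the argument must carefully consider both pairings $(y,x)$ and $(y,z)$ and which element of the symmetric difference to avoid, in order to maintain a distinguishing pair in $C$ through to the Hall verification.
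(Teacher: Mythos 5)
Your proof is correct and follows essentially the same route as the paper's: rule out $M_{7,1}$ by inspection, apply Lemma~\ref{JoinerOrDifferentLists}, run a Hall-type list argument when no vertex of $G-F$ dominates $C$, and analyze the four-vertex graph joined to $K_{k-3}$ when one does. Two small remarks. First, your worry about the tight case $k=7$ in the first branch is unfounded: two distinct lists on $C$ each have size at least $k-2$, so their union has size at least $k-1$ and, after deleting the two colors used on $x,y,z$, still has size at least $k-3=\card{C}$; hence Hall's condition holds for every subset of $C$ with no ``switching of pairings'' needed (the paper sidesteps even this by coloring only the pair $\{y,x\}$ with one color and folding $z$ into the clique, so that the remaining clique has $k-2$ vertices). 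Second, in the dominating-vertex branch the paper simply invokes Lemma~\ref{K_tClassification} to force $G[\{x,y,z,v\}]$ to be almost complete (it cannot be $E_3$ or a claw because of the edge $xz$ and the non-edges $xy,yz$), which immediately yields a $K_k$; this subsumes your explicit adjacency case analysis and the separate list-coloring computation for $\join{2K_2}{K_{k-3}}$, though your more elementary version is also valid.
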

\begin{proof}
Suppose the lemma is false and let $G$ be a $k$-mule containing such an induced
subgraph $F$.  Note that $M_{7,1}$ has no induced $\join{\overline{P_3}}{K_{k -
3}}$, so $G \neq M_{7,1}$. Let $z \in V(F)$ be the vertex with degree $k-3$ in
$F$, $v_1, v_2 \in F$ the vertices of degree $k-2$ in $F$ and $C$ the rest of the vertices in $F$ (all of degree $k-1$ in $F$). 
Put $H \DefinedAs G - F$.

First suppose there is not a vertex $x \in V(H)$ which is adjacent to all of $C$. 
Let $\pi$ be a $(k - 1)$-coloring of $H$ guaranteed by Lemma
\ref{JoinerOrDifferentLists} and put $L \DefinedAs L_\pi$.  Then, we have
$\card{L(z)} \geq k-4$ and $\card{L(v_1)} \geq k-3$.  
Since $k \geq 7$, $\card{L(z)} + \card{L(v_1)} \geq 2k - 7 > k - 1$.  
Hence, by the pigeonhole principle, we may color $z$ and $v_1$ the same.  
Let $L'$ be the resulting list assignment on $F - \set{z, v_1}$. 
Now $\card{L'(v_2)} \geq k-4$ and $\card{L'(v)} \geq k-3$ for each $v \in C$. 
By our choice of $\pi$, either two of the lists in $C$ differ or for some $v \in C$ we have $\card{L'(v)} \geq k-2$.  
In either case, we can complete the $(k - 1)$-coloring to all of $G$ by Hall's Theorem.

Hence we must have $x \in V(H)$ which is adjacent to all of $C$.
Thus $G$ contains the induced subgraph $\join{K_4}{G[z, v_1, v_2, x]}$.  
By Lemma \ref{K_tClassification}, $G[z, v_1, v_2, x]$ must be almost complete and hence $x$ must be adjacent to both $v_1$ and $v_2$.  
But then $\join{G[v_1, v_2, x]}{C}$ is a $K_k$ in $G$, giving a contradiction.
\end{proof}

Reed proved that for $k \geq 9$, a vertex outside a $(k - 1)$-clique $H$ in a
$k$-mule can have at most $4$ neighbors in $H$.  We improve this to at most one
neighbor.

\begin{figure}[h!tb]
\begin{center}
\includegraphics[scale=.75,trim=2cm 18.5cm 2cm 2.5cm, clip]{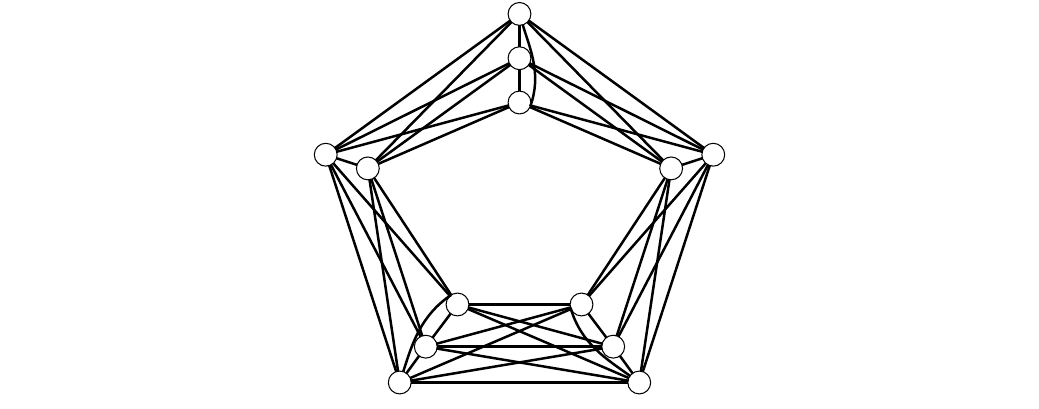}
\end{center}
\caption{The mule $M_{7,2}$, where $M_{7,2}=C_5\boxtimes K_3-2v$.
(Here $2v$ denotes a pair of nonadjacent vertices, and $\boxtimes$ denotes
the \emph{strong product}, such that $K_2\boxtimes K_2= K_4$.)}
\label{fig:M_72}
\end{figure}

\begin{lem}\label{NoForksThatArentKnives}
For $k \geq 7$ and $r \geq 2$, no $k$-mule except $M_{7,1}$ and $M_{7,2}$
(see Figures \ref{fig:M_7} and \ref{fig:M_72})
contains an induced copy of $\join{K_r}{\left(\djunion{K_1}{K_{k - (r + 1)}}\right)}$.
\end{lem}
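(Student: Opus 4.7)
The plan is to follow the template from the proofs of Lemma \ref{NoE3} and Lemma \ref{NoTwooks}: assume $G$ is a $k$-mule outside $\{M_{7,1},M_{7,2}\}$ containing an induced copy $F\cong \join{K_r}{(\djunion{K_1}{K_{k-r-1}})}$ with $r\ge 2$. Write $R$ for the $K_r$ part of $F$, $u$ for the isolated vertex (the $K_1$), and $K$ for the $K_{k-r-1}$ part. Then $R\cup K$ is an induced $K_{k-1}$ in $G$; vertices of $R$ have $d_F = k-1$, $u$ has $d_F = r$, and each vertex of $K$ has $d_F = k-2$. Setting $H \DefinedAs G - F$, the set $C$ from Lemma \ref{JoinerOrDifferentLists} satisfies $R\subseteq C$, so applying that lemma splits the argument into a no-joiner case and a joiner case.

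In the no-joiner case I would obtain a proper $(k-1)$-coloring $\pi$ of $H$ for which either some $v\in C$ has $|L_\pi(v)|=k-1$ or two vertices of $C$ have different lists, and then extend $\pi$ to a proper $(k-1)$-coloring of $G$, contradicting $\chi(G)=k$. Since $F$ is $(k-1)$-chromatic on $k$ vertices with $R$ joined to everything else, any proper $(k-1)$-coloring of $F$ forces $u$ to share its color with some $v^*\in K$. I would proceed in three steps: (i) pick $v^*\in K$ and $c\in L_\pi(u)\cap L_\pi(v^*)$, coloring $u$ and $v^*$ with $c$; (ii) greedily color $K - v^*$ from its lists using distinct colors different from $c$, which works because each subsequent vertex of $K$ has at least $r-1$ options; (iii) list-color the $K_r$ on $R$ from the complementary $r$-element palette $T$. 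Each list $L_\pi(v)\cap T$ with $v\in R$ has size at least $r-1$, one short of what Hall's condition demands on the $K_r$, so here the flexibility from the Lemma \ref{JoinerOrDifferentLists} hypothesis is invoked to either boost a single list on $R$ to size $r$ or to produce two distinct lists whose union has size at least $r$.

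In the joiner case I would take $z\in V(H)$ with $C\subseteq N(z)$; in particular $z$ is adjacent to every vertex of $R$. If $z$ is adjacent to every vertex of $K$ as well, then $G$ contains $K_k$, contradicting $\omega(G) < k$. Otherwise $z$ has some number $r'$ of neighbors in the $K_{k-1}$ on $R\cup K$ with $r\le r' \le k-2$, yielding another induced copy of $\join{K_{r'}}{(\djunion{K_1}{K_{k-r'-1}})}$ in $G$ (now with $z$ as the isolated vertex). I would then split on whether $z\sim u$: if $z\sim u$, then $R\cup\{u,z\}$ is a $K_{r+2}$ which, together with a suitable vertex of $K$, builds a configuration contradicting Lemma \ref{NoE2}; if $z\not\sim u$, pick any $w\in K\setminus N(z)$ and note that $\{u,z,w\}$ induces $\overline{P_3}$ jointly adjacent to $R$, aiming to derive a contradiction from Lemma \ref{NoTwooks} (possibly after increasing $r$ via recursion).

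The hard parts are twofold. In the no-joiner case, the sharpest regime is $r=2$, where $|L_\pi(u)|$ can be as small as $1$, so step (i)---finding $v^*\in K$ with $L_\pi(u)\cap L_\pi(v^*)\neq\emptyset$---is not automatic and requires either reselecting $\pi$ or exploiting the union $\bigcup_{v\in K} L_\pi(v)$ (which is nearly the full palette, since each vertex of $K$ has at most two neighbors in $H$) to locate a suitable $v^*$. In the joiner case, the subtle point is when $z$ has many but not all neighbors in $K$: extracting a forbidden induced subgraph from $\{u,z\}\cup R\cup K$ contradicting one of the previously excluded joins requires careful bookkeeping of adjacencies, and in some subcases appears to force an induction on $r$ or an auxiliary list-coloring argument.
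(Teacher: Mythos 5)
There is a genuine gap, and it sits exactly at the two places you flag as ``hard parts.'' Your template (apply Lemma~\ref{JoinerOrDifferentLists}, then extend a coloring of $G-F$ across $F$) is the one used for Lemmas~\ref{NoE3} and~\ref{NoTwooks}, but this is precisely the lemma where that template fails and the paper switches to different machinery. In the no-joiner case, your step (i) needs a common color in $L_\pi(u)\cap L_\pi(v^*)$ for some $v^*\in K$, and for $r=2$ (and even $r=3$) no pigeonhole count delivers this: $\card{L_\pi(u)}$ can be as small as $r-1$ and the lists on $K$ can jointly avoid it (e.g.\ $L_\pi(u)=\{1\}$ while every $v\in K$ has list $\{3,\dots,k-1\}$). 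The paper does not repair this by reselecting $\pi$ or by examining $\bigcup_{v\in K}L_\pi(v)$. Instead it colors $H+u$ rather than $H$, after adding the edge set $Z_1=\setbs{ua}{a\in N(v_1)\cap V(H)}$, and invokes Lemma~\ref{EpiPower}: either the modified graph is $(k-1)$-colorable, which \emph{by construction} puts on $u$ a color absent from $N(v_1)\cap V(H)$ and hence usable on $v_1$, or else $H+u$ contains $K_k-E(A)$ for $A\in\set{K_1,K_2,P_3}$, each forbidden by $\omega<k$, Lemma~\ref{NoE2}, or Lemma~\ref{NoTwooks}. This either/or mechanism is the missing idea; without it the extension argument does not get off the ground, and it is reused twice more (adding $a_1a_2$, and then $Z_2$ for a second vertex $v_2$ and the joiner $x$) to handle the cases your outline leaves open.

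The joiner case has a second, independent gap. Your proposed contradictions are too weak: if $z\sim u$ you get $\join{E_2}{K_r}$ from $\set{u,w}$ and $R$, but Lemma~\ref{NoE2} requires $\join{E_2}{K_{k-2}}$, and $r$ can be as small as $2$; similarly the $\overline{P_3}$ you build is only joined to $K_r$, not to $K_{k-3}$ as Lemma~\ref{NoTwooks} demands. The paper instead first reduces to $r\le k-4$ and chooses $r$ \emph{maximal} (so the joiner $x$ is adjacent to no $v_i$), then runs the $Z_1,Z_2$ edge-addition argument, ruling out $K_k-E(A)$ for $A\in\set{K_3,P_4,\djunion{K_2}{P_3},2K_2,P_5,2P_3,C_4}$ via Lemmas~\ref{NoE3}, \ref{AJoinP_4}, \ref{E2Classification}, and \ref{ConnectedAtLeast4Poss}; the residual case $A=C_4$ with $k=7$ requires a detailed structural analysis that terminates only by identifying $G$ as $M_{7,2}$ --- which is why $M_{7,2}$ appears in the statement at all. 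Your outline has no route to producing either exceptional mule, which is a sign the intended contradictions cannot all materialize.
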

\begin{proof}
Suppose the lemma is false and let $G$ be a $k$-mule, other than $M_{7,1}$ and
$M_{7,2}$, containing such an induced subgraph $F$ with $r$ maximal. By Lemma
\ref{NoE2} and Lemma \ref{NoTwooks}, the lemma holds for $r \geq k - 3$. So we have $r \leq k - 4$. Now, let $z \in V(F)$ be the vertex with degree $r$
in $F$, $v_1, v_2, \ldots, v_{k - (r + 1)} \in V(F)$ the vertices of degree $k -
2$ in $F$ and $C$ the rest of the vertices in $F$ (all of degree $k-1$ in $F$). Put $H \DefinedAs G - F$ and $H' \DefinedAs G\brackets{V(H) \cup \set{z}}$.

Let $Z_1 \DefinedAs \setbs{za}{a \in N(v_1) \cap V(H)}$.  
Consider the graph $D \DefinedAs H' + Z_1$.  
Since $v_1$ has at most two neighbors in $H$, $\card{Z_1} \leq 2$ and thus to
form $D$ from $H'$, we added $E(A)$ where $A \in \set{K_1, K_2, P_3, 2K_2}$.  Since
$\card{C} \geq 2$, $\Delta(D) \leq k$. Hence Lemma \ref{EpiPower} shows that $H
+ z$ contains $K_k - E(A)$ or $\chi(D) \leq k - 1$.  
Suppose $\chi(D) \geq k$. 
If $A = K_1$, $A = K_2$, or $A = P_3$, then 
we have a contradiction by the fact that $\omega(G) < k$, Lemma \ref{NoE2}, and
Lemma \ref{NoTwooks}, respectively.  If $A=2K_2$, then we get a
contradiction from Lemma~\ref{E2Classification}.
Thus
we must have $\chi(D) \leq k - 1$, which gives a $(k - 1)$-coloring of $H'$
in which $z$ receives a color $c$ which is not received by any of the neighbors
of $v_1$ in $H$. Thus $c$ remains in the list of $v_1$ and we may color $v_1$
with $c$.  After doing so, each vertex in $C$ has a list of size at least $k-3$
and $v_i$ for $i > 1$ has a list of size at least $k-4$.  If any pair of
vertices in $C$ had different lists, then we could complete the partial
coloring by Hall's Theorem.  Let $N \DefinedAs \bigcup_{w \in C} N(w) \cap
V(H)$ and note that $N$ is an independent set since it is contained in a single
color class in the $(k - 1)$-coloring of $H$ just constructed.

Suppose $\card{N} \geq 2$.  Pick $a_1, a_2 \in N$. 
Consider the graph $D \DefinedAs H' + Z_1 + a_1a_2$.  
Plainly, $\Delta(D) \leq k$. 
To form $D$ from $H'$ we added $E(A)$, where $A \in \set{K_2, P_3, K_3, 2K_2,
P_4, \djunion{K_2}{P_3}}$.  Hence Lemma \ref{EpiPower} shows that either
$H'$ contains $K_k - E(A)$ or else $\chi(D) \leq k - 1$.  
We show that the former case is impossible.
To show that $A=K_3$, $A=P_4$, $A=2K_2$ and $A=\djunion{K_2}{P_3}$ are impossible,
we apply Lemma \ref{NoE3} (this is where we use the fact that
$G\ne M_{7,1}$),  Lemma~\ref{E2Classification}, Lemma \ref{AJoinP_4} (since $K_t-E(P_4)=\join{P_4}{K_{t-4}}$),
and Lemma \ref{E2Classification} (using $\set{a_1, a_2}$ as the $E_2$), respectively.

Thus we must have $\chi(D) \leq k - 1$, which gives a $(k -
1)$-coloring of $H'$ in which $a_1$ and $a_2$ are in different color classes
and $z$ receives a color not received by any neighbor of $v_1$ in $H$.  As
above we can complete this partial coloring to all of $G$ by first coloring $z$
and $v_1$ the same and then using Hall's Theorem.

Hence there is a vertex $x \in V(H)$ which is adjacent to all of $C$.  
Note that $x$ is not adjacent to any of $v_1, v_2, \ldots, v_{k - (r + 1)}$ by the maximality of $r$. 
Let $Z_2 \DefinedAs \setbs{xa}{a \in N(v_2) \cap V(H)}$.  Consider the graph 
$D \DefinedAs H' + Z_1 + Z_2$.  As above, both $Z_1$ and $Z_2$ have
cardinality at most $2$.  Since $\card{C} \geq 2$, both $x$ and $z$ have degree at most $k$ in $D$.  
Since both $xa$ and $za$ were added only if $a$ was a neighbor of both $v_1$ and $v_2$, 
all the neighbors of $v_1$ in $H$ have degree at most $k$ in $D$. Similarly for $v_2$'s neighbors.  
Hence $\Delta(D) \leq k$. 
To form $D$ from $H'$ we added $E(A)$ where $A
\in \set{K_1, K_2, P_3, K_3, P_4, \djunion{K_2}{P_3}, 2K_2, P_5, 2P_3, C_4}$. 
Hence Lemma \ref{EpiPower} shows that $H'$ contains $K_k - E(A)$ or $\chi(D) \leq k - 1$.  

Suppose $\chi(D) \geq k$. Then $A = K_1$, $A = K_2$, $A = P_3$, $A =
K_3$, $A = P_4$, and $A = \djunion{K_2}{P_3}$ are impossible as above.  
Applying Lemma \ref{E2Classification} shows that $A = 2K_2$, $A = P_5$, and 
$A = 2P_3$ are impossible.  Thus we must have $A = C_4$.  If $k \geq 8$, then 
Lemma \ref{ConnectedAtLeast4Poss} gives a contradiction.  Hence we must have
$k = 7$. 
Since $H'$ contains an induced copy of $\join{K_3}{2K_2}$, we must have $N(v_1) \cap
V(H) = N(v_2) \cap V(H)$, say $N(v_1) \cap V(H) = \set{w_1, w_2}$.  Moreover,
$xz \in E(G)$, $w_1w_2 \in E(G)$ and there are no edges between $\set{w_1, w_2}$
and $\set{x, z}$ in $G$.  

Put $Q \DefinedAs \set{v_1, \ldots, v_{k - (r + 1)}}$. Then for $v \in Q$, by
the same argument as above, we must have $N(v) \cap V(H) = \set{w_1, w_2}$. 
Hence $Q$ is joined to $\set{w_1, w_2}$, $C$ is joined to $Q$, and $\set{x, z}$
and both $\set{x, z}$ and $\set{w_1, w_2}$ are joined to the same $K_3$ in $H$. 
We must have $r = 3$ for otherwise one of $x, z, w_1, w_2$ has degree larger
than $7$.  Thus we have an $M_{7, 2}$ in $G$ and therefore $G$ is $M_{7,2}$, a
contradiction.

Thus we must have $\chi(D) \leq k - 1$, which gives a $(k - 1)$-coloring of $H'$ in which $z$ receives a color $c_1$ which is not received by any of the 
neighbors of $v_1$ in $H$ and $x$ receives a color $c_2$ which is not received by any of the neighbors of $v_2$ in $H$.  
Thus $c_1$ is in $v_1$'s list and $c_2$ is in $v_2$'s list. Note that if $x$ and $z$ are adjacent then $c_1 \neq c_2$. Hence, we can $2$-color $G[x,z,v_1,v_2]$ from the lists.  
This leaves $k-3$ vertices. The vertices in $C$ have lists of size at least $k-3$ and the rest have lists of size at least $k-5$.  
Since the union of any $k-4$ of the lists contains one list of size $k-3$, we
can complete the partial coloring by Hall's Theorem.
\end{proof}

\begin{cor}\label{AtMostOneEdgeIn}
For $k \geq 7$, if $H$ is a $(k - 1)$-clique in a $k$-mule $G$ other than
$M_{7,1}$ and $M_{7,2}$, then any vertex in $G - H$ has at most one neighbor in
$H$.
\end{cor}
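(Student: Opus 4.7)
The plan is to argue by contradiction: assume $G$ is a $k$-mule with $k \geq 7$, $G \notin \set{M_{7,1}, M_{7,2}}$, $H$ is a $(k-1)$-clique in $G$, and some vertex $v \in V(G) - V(H)$ has $r \DefinedAs \card{N(v) \cap V(H)} \geq 2$. I would first dispose of the extreme case $r = k - 1$: then $\set{v} \cup V(H)$ induces a $K_k$, which is impossible since $G \in \C{k}$ forces $\omega(G) < k$.

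For the main range $2 \leq r \leq k - 2$, the idea is simply to identify the induced subgraph on $\set{v} \cup V(H)$ and invoke Lemma \ref{NoForksThatArentKnives}. Let $S \DefinedAs N(v) \cap V(H)$ and $T \DefinedAs V(H) \setminus S$, so $\card{S} = r$ and $\card{T} = k - 1 - r \geq 1$. Because $H$ is a clique, $G[S] = K_r$, $G[T] = K_{k-1-r}$, and $S$ is completely joined to $T$. Since $v$ is adjacent to all of $S$ and none of $T$, the induced subgraph $G[\set{v} \cup V(H)]$ is exactly $\join{K_r}{\left(\djunion{K_1}{K_{k-(r+1)}}\right)}$, with $\set{v}$ playing the role of the $K_1$ summand and $T$ playing the role of the $K_{k-(r+1)}$ summand.

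Lemma \ref{NoForksThatArentKnives} then immediately yields a contradiction: it says that for $k \geq 7$ and $r \geq 2$, the only $k$-mules containing such an induced subgraph are $M_{7,1}$ and $M_{7,2}$, but we excluded these at the outset.

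There is no real obstacle to overcome here, since all of the work was already packed into Lemma \ref{NoForksThatArentKnives}. The only thing to be careful about is the boundary case $r = k - 1$ (where the ``$\djunion{K_1}{K_0}$'' interpretation of the forbidden subgraph degenerates), which is why I would handle it separately by the immediate $K_k$ argument against $\omega(G) < k$.
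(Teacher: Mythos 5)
Your proposal is correct and is essentially identical to the paper's proof: both identify $G[\set{v}\cup V(H)]$ as $\join{K_r}{\left(\djunion{K_1}{K_{k-(r+1)}}\right)}$ and invoke Lemma~\ref{NoForksThatArentKnives}. Your separate treatment of the degenerate case $r=k-1$ (via $\omega(G)<k$) is a small point of extra care that the paper's proof leaves implicit.
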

\begin{proof}
Let $v\notin H$ be adjacent to $r$ vertices in $H$.  Now
$G[H\cup\{v\}]=\join{K_r}{(K_1+K_{k-(r+1)})}$.  If $r\ge 2$, then $G[H\cup\{v\}]$ is
forbidden by Lemma~\ref{NoForksThatArentKnives}.
\end{proof}

\begin{lem}\label{K4sOut}
For $k \geq 7$, no $k$-mule except $M_{7,1}$ contains
$\join{K_4}{E_{k-4}}$ as a subgraph.
\end{lem}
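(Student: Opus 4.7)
My plan is to suppose for contradiction that $G$ is a $k$-mule with $k \geq 7$, $G \neq M_{7,1}$, containing $\join{K_4}{E_{k-4}}$ as a subgraph, and to derive a contradiction. Write $K$ for the $K_4$-side and $I = \{u_1, \ldots, u_{k-4}\}$ for the $E_{k-4}$-side of this subgraph, and set $H \DefinedAs G[I]$. Then $G[K \cup I] = \join{K_4}{H}$ is an induced subgraph of $G$; since $G$ is a mule, this induced join cannot be $d_0$-choosable, and so Lemma~\ref{K_tClassification} applies to it.

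The first step is to invoke Lemma~\ref{K_tClassification} on the induced join $\join{K_4}{H}$. As noted in the section outline, this forces $H$ into one of four shapes: (a) a clique, (b) almost complete, (c) a claw, or (d) a graph containing an induced $E_3$. I then dispatch the four cases in turn. Case (a) is immediate: $\join{K_4}{K_{k-4}} = K_k$ would violate $\omega(G) < k$.

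For case (b) I locate a vertex $u \in I$ whose deletion from $H$ leaves a clique; then $(K \cup I) \setminus \{u\}$ induces a $K_{k-1}$ in $G$, and $u$ still has at least four neighbors in this $K_{k-1}$, namely all of $K$. Corollary~\ref{AtMostOneEdgeIn} forbids this unless $G \in \{M_{7,1}, M_{7,2}\}$, and I finish by inspecting Figure~\ref{fig:M_72}: every $K_4$ inside $M_{7,2}$ has at most two common neighbors, so $M_{7,2}$ itself contains no $\join{K_4}{E_{k-4}}$ subgraph, ruling out the last exception.

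Cases (c) and (d) both supply three mutually non-adjacent vertices $u_1, u_2, u_3 \in I$, so $G[K \cup \{u_1, u_2, u_3\}]$ is an induced $\join{K_4}{E_3}$. When $k = 7$ this is precisely an induced $\join{E_3}{K_{k-3}}$, so Lemma~\ref{NoE3} forces $G = M_{7,1}$, contradicting the hypothesis. The main obstacle is the sub-case $k \geq 8$, in which $|K| = 4 < k - 3$, so to invoke Lemma~\ref{NoE3} one must first grow $K$ to a $K_{k-3}$ inside the common neighborhood of $\{u_1, u_2, u_3\}$. My plan for this is to exploit the remaining $k-7$ vertices of $I \setminus \{u_1, u_2, u_3\}$ (each already joined to $K$), together with degree bounds $d_G(u_i) \leq k$ and repeated appeals to Lemmas~\ref{NoTwooks} and~\ref{NoForksThatArentKnives}, to either assemble the missing $(k-7)$ common-neighbor clique vertices needed to complete an induced $\join{E_3}{K_{k-3}}$, or else to locate directly a smaller previously-excluded configuration inside $G$.
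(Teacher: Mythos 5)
Your cases (a) and (b) match the paper's proof: the clique case contradicts $\omega(G)<k$, and the almost complete case is killed by Corollary~\ref{AtMostOneEdgeIn} together with the observation that $M_{7,2}$ contains no $\join{K_4}{E_3}$. The problem is in how you read Lemma~\ref{K_tClassification} for the remaining cases. That lemma does not say that the exceptional $B$'s for $t=4$ are graphs \emph{containing} an induced $E_3$; it says $B$ is almost complete, or $B$ \emph{is} $E_3$, or $B$ \emph{is} $K_{1,3}$. Since here $\card{D}=k-4$, the case $D=E_3$ forces $k=7$ and the case $D=K_{1,3}$ forces $k=8$. So the ``main obstacle'' you describe --- $k\ge 8$ with $D$ some larger graph containing an induced $E_3$ --- never arises: any such $D$ that is not almost complete makes $\join{K_4}{D}$ $d_1$-choosable, hence not an induced subgraph of a mule. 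Your proposed treatment of that phantom sub-case (``assemble the missing $(k-7)$ common-neighbor clique vertices \ldots or else locate directly a smaller previously-excluded configuration'') is not an argument, only a hope, so as written the proof is incomplete precisely where you flag the difficulty.

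Two smaller corrections. First, in the claw case $k=8$, not $7$, so your ``when $k=7$'' reduction does not cover it; instead absorb the claw's center into the clique: the center is joined to $K$ and to the three leaves, so $G[K\cup I]$ is an induced $\join{K_5}{E_3}=\join{E_3}{K_{k-3}}$, and Lemma~\ref{NoE3} applies directly. Second, with the classification read correctly, the $E_3$ case ($k=7$) is also immediate from Lemma~\ref{NoE3}, and the whole proof collapses to the paper's four-line argument; no appeal to Lemmas~\ref{NoTwooks} or~\ref{NoForksThatArentKnives} is needed beyond their role in establishing Corollary~\ref{AtMostOneEdgeIn}.
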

\begin{proof}
Let $G$ be a $k$-mule other than $M_{7,1}$ and suppose $G$
contains an induced copy of $\join{K_4}{D}$ where $\card{D} = k - 4$. Then $G$ is not
$M_{7,2}$. By Lemma \ref{K_tClassification}, $D$ is $E_3$, a claw, a clique, or
almost complete. If $D$ is a clique then $G$  contains $K_k$, a contradiction. Now Corollary \ref{AtMostOneEdgeIn} shows that $D$ being almost complete is
impossible. Finally, Lemma \ref{NoE3} shows that $D$ cannot be $E_3$ or a claw.  This contradiction completes the proof.
\end{proof}

Since $\join{K_4}{E_{\Delta - 4}} \subseteq K_\Delta$, Lemma \ref{K4sOut} shows that the following conjecture is equivalent to the Borodin-Kostochka conjecture.

\begin{conjecture}\label{K4Conjecture}
Any graph with $\chi = \Delta \geq 9$ contains $\join{K_4}{E_{\Delta - 4}}$ as a subgraph.
\end{conjecture}

In the next section we create the tools needed to reduce the $4$ in Lemma \ref{K4sOut} down to $3$.

\subsection{Tooling up}
For an independent set $I$ in a graph $G$, we write $\frac{G}{\brackets{I}}$ for
the graph formed by collapsing $I$ to a single vertex and discarding duplicate
edges.  We write $\brackets{I}$ for the resulting vertex in the new graph.  If
more than one independent set $I_1, I_2, \ldots, I_m$ are collapsed in
succession we indicate the resulting graph by
$\frac{G}{\brackets{I_1}\brackets{I_2}\cdots\brackets{I_m}}$.

\begin{lem}\label{ToolOne}
Let $G$ be a $k$-mule other than $M_{7,1}$ and $M_{7,2}$ with $k \geq 7$ and $H
\lhd G$. If $x, y \in V(H)$, $xy \not \in E(H)$ and  $\card{N_H(x) \cup N_H(y)} \leq k$, then there exists a $(k - 1)$-coloring $\pi$ of $H$ such that $\pi(x) = \pi(y)$.
\end{lem}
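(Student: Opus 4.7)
The plan is to identify $x$ and $y$, apply Lemma~\ref{EpiPower} to the resulting quotient, and then use Corollary~\ref{AtMostOneEdgeIn} to discard the one bad case.

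Since $xy \notin E(H)$, the pair $\{x,y\}$ is independent in $H$, so I can form $H' \DefinedAs \frac{H}{[\{x,y\}]}$ by collapsing $\{x,y\}$ to a single new vertex $[x,y]$. The natural quotient map $H \surj H'$ is an epimorphism, and $H' \neq G$ (indeed, $|V(H')| < |V(H)| < |V(G)|$), so $H'$ is a child of $G$. In $H'$, the vertex $[x,y]$ has degree $|N_H(x) \cup N_H(y)| \leq k$ by hypothesis, while every other vertex has degree at most its degree in $H$, hence at most $\Delta(G) \leq k$. Thus $\Delta(H') \leq k$, and Lemma~\ref{EpiPower} yields that either $H'$ is $(k-1)$-colorable or $H'$ contains a $K_k$.

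If $H'$ is $(k-1)$-colorable, I pull the coloring back to $H$ by assigning both $x$ and $y$ the color of $[x,y]$; this produces a $(k-1)$-coloring $\pi$ of $H$ with $\pi(x) = \pi(y)$, as required. The only real obstacle is the second alternative, but it dissolves quickly using the corollary already available.

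Suppose for contradiction that $H'$ contains a $K_k$. Since $H \unlhd G$ and $\omega(G) < k$, the subgraph $H$ itself is $K_k$-free, so the clique must use $[x,y]$. Its other $k-1$ vertices then form a $(k-1)$-clique $S$ in $H$, and therefore in $G$, with $S \subseteq N_H(x) \cup N_H(y) \subseteq N_G(x) \cup N_G(y)$. But Corollary~\ref{AtMostOneEdgeIn} applies, since $k \geq 7$ and $G \notin \{M_{7,1}, M_{7,2}\}$, and forces $|N_G(x) \cap S| \leq 1$ and $|N_G(y) \cap S| \leq 1$. Hence $|S| \leq 2$, contradicting $|S| = k - 1 \geq 6$. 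This rules out the second case and completes the proof.
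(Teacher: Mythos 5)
Your proof is correct and follows essentially the same route as the paper: contract $\{x,y\}$ to get a child $H'$ of $G$, apply Lemma~\ref{EpiPower}, and kill the $K_k$ alternative by noting that the resulting $(k-1)$-clique inside $N_H(x)\cup N_H(y)$ would force $x$ or $y$ to have too many neighbors in a $(k-1)$-clique, contradicting Corollary~\ref{AtMostOneEdgeIn}. Your write-up is in fact slightly more careful than the paper's, since you explicitly verify $\Delta(H')\le k$ before invoking Lemma~\ref{EpiPower}.
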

\begin{proof}
Suppose $x, y \in V(H)$, $xy \not \in E(H)$ and $\card{N_H(x) \cup N_H(y)} \leq
k$.  Put $H' \DefinedAs \frac{H}{\brackets{x, y}}$. Then
$H' \prec H$ via the natural epimorphism $\funcsurj{f}{H}{H'}$.  By applying
Lemma \ref{EpiPower} we either get the desired $(k - 1)$-coloring $\pi$ of
$H$ or a $K_{k-1}$ in $H$ with $V(K_{k-1}) \subseteq N(x) \cup N(y)$.  But $k -
1 \geq 6$, so one of $x$ or $y$ has at least three neighbors in $K_{k-1}$
violating Corollary \ref{AtMostOneEdgeIn}.
\end{proof}

\begin{lem}\label{ToolTwo}
Let $G$ be a $k$-mule other than $M_{7,1}$ and $M_{7,2}$ with $k \geq 7$ and $H
\lhd G$.  Suppose there are disjoint pairs $\set{x_1, y_1},
\set{x_2, y_2} \subseteq V(H)$ where $x_2$ and $y_2$ are nonadjacent, with $d_H(x_1)\le k-1$ and $d_H(y_1) \leq k - 1$
and $\card{N_H(x_2) \cup N_H(y_2)} \leq k$. Then there exists a $(k -
1)$-coloring $\pi$ of $H$ such that $\pi(x_1) \neq \pi(y_1)$ and $\pi(x_2) =
\pi(y_2)$.
\end{lem}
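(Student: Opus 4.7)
The plan is to apply Lemma~\ref{EpiPower} to the child
\[ H^* \DefinedAs \frac{H + x_1y_1}{\brackets{x_2, y_2}}, \]
formed from $H$ by first adding the non-edge $x_1y_1$ and then collapsing the
non-adjacent pair $\{x_2, y_2\}$. Since the two pairs are disjoint and
non-adjacent, the natural map $H \surj H^*$ is an epimorphism, so
$H^* \prec G$. The hypotheses $d_H(x_1), d_H(y_1) \leq k-1$ and
$\card{N_H(x_2) \cup N_H(y_2)} \leq k$ ensure $\Delta(H^*) \leq k$, so
Lemma~\ref{EpiPower} applies. If $H^*$ is $(k-1)$-colorable, the coloring
pulls back along the epimorphism to a proper $(k-1)$-coloring of $H$ with
$\pi(x_1) \neq \pi(y_1)$ (forced by the added edge) and $\pi(x_2) = \pi(y_2)$
(forced by the identification), as desired.

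Otherwise $H^*$ contains a $K_k$, and the proof proceeds by case-splitting on
whether this clique uses the added edge $x_1y_1$ and/or the contracted vertex
$[x_2, y_2]$. Using neither is impossible since the $K_k$ would then lie
inside $H$, contradicting $\omega(G) < k$. Using only $x_1y_1$ produces an
induced $\join{E_2}{K_{k-2}}$ in $H$ with $E_2 = \{x_1, y_1\}$, contradicting
Lemma~\ref{NoE2}. Using only $[x_2, y_2]$ produces a $K_{k-1}$ in $H$ with
every vertex lying in $N_H(x_2) \cup N_H(y_2)$; Corollary~\ref{AtMostOneEdgeIn}
then bounds the number of covered vertices by at most $2$, contradicting
$k-1 \geq 6$.

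The remaining case, in which the $K_k$ uses both, is the main obstacle. Here
$H$ contains an induced $\join{E_2}{K_{k-3}}$ on
$S \DefinedAs \{x_1, y_1\} \cup K^{***}$ with
$S \subseteq N_H(x_2) \cup N_H(y_2)$. The first step is to show that no vertex
$z$ of $G$ outside $\{x_1, y_1\}$ is adjacent to all of $K^{***}$: such a $z$
adjacent to $x_1$ or $y_1$ yields a $(k-1)$-clique containing $K^{***}$ in
which the other of $x_1, y_1$ has $k-3 \geq 4$ neighbors, contradicting
Corollary~\ref{AtMostOneEdgeIn}, while such a $z$ adjacent to neither yields an
induced $\join{E_3}{K_{k-3}}$, contradicting Lemma~\ref{NoE3} (since
$G \neq M_{7,1}$). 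Hence neither $x_2$ nor $y_2$ is joined to $K^{***}$, so
one may choose distinct $w_a, w_b \in K^{***}$ with $w_a \not\sim x_2$,
$w_a \sim y_2$, $w_b \sim x_2$, and $w_b \not\sim y_2$. A case analysis on
the adjacencies of $x_1, y_1$ to $\{x_2, y_2\}$, using the $(k-3)$-cliques
$\{x_1\} \cup (K^{***} \setminus \{w_a\})$ and
$\{y_1\} \cup (K^{***} \setminus \{w_b\})$ together with the bound
$\card{N_H(x_2) \cup N_H(y_2)} \leq k$, produces in every sub-case a forbidden
induced configuration---typically $\join{\overline{P_3}}{K_{k-3}}$ (ruled out
by Lemma~\ref{NoTwooks})---yielding the final contradiction. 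The technical
heart of the proof lies in this last sub-case analysis.
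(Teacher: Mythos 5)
Your construction of $H^* = \frac{H + x_1y_1}{\brackets{x_2,y_2}}$, the degree check, the application of Lemma~\ref{EpiPower}, and the disposal of the first three cases (the $K_k$ uses neither the added edge nor the contracted vertex, only the edge, or only the contracted vertex) are all correct and coincide with the paper's proof, which invokes Lemma~\ref{NoE2} and Lemma~\ref{NoForksThatArentKnives} at exactly these points. The gap is in the remaining case, which you yourself identify as the technical heart and then do not prove. After correctly extracting the induced $\join{\set{x_1,y_1}}{K^{***}}$ with $V(A)\subseteq N_H(x_2)\cup N_H(y_2)$, showing that no vertex outside $\set{x_1,y_1}$ is joined to $K^{***}$, and producing $w_a, w_b$, you assert that a case analysis yields a forbidden configuration, ``typically $\join{\overline{P_3}}{K_{k-3}}$.'' But an induced $\join{\overline{P_3}}{K_{k-3}}$ requires three vertices, inducing exactly one edge, all joined to a common $(k-3)$-clique. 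The only vertices known to be joined to $K^{***}$ are $x_1$ and $y_1$, which induce no edge; and for your shifted clique $\set{x_1}\cup (K^{***}\setminus\set{w_a})$ the only outside vertex guaranteed to be joined to it is $w_a$ itself, since $y_1\not\sim x_1$ and nothing controls the adjacencies of $x_2$ or $y_2$ to the vertices of $K^{***}$ other than $w_a$ and $w_b$. So the configuration you are aiming for is not in evidence, and the argument does not close.

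What is missing is a quantitative bound on how many neighbors $x_2$ and $y_2$ can have in $K^{***}$. The paper shows each has at most \emph{two} such neighbors (via Lemma~\ref{K4sOut} and Lemma~\ref{ConnectedEqual3Poss}); since $K^{***}\subseteq N_H(x_2)\cup N_H(y_2)$ and $\card{K^{***}}=k-3$, this forces $k-3\le 4$, hence $k=7$ with both vertices having exactly two neighbors in the resulting $K_4$. One of $x_2,y_2$ is then adjacent to one of $x_1,y_1$, and $\set{x_2}\cup V(A)$ induces $\join{K_2}{\text{antichair}}$ or a graph containing $\join{K_2}{C_4}$, both excluded by Lemma~\ref{MinimalK2}. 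Your ``not joined to all of $K^{***}$'' observation only gives at most $k-4$ neighbors each, which is far too weak to pin down $k$, and the final forbidden configuration is never exhibited; as written, the main case of the proof is a genuine gap.
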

\begin{proof}
If $x_1$ is adjacent to $y_1$, we are done by Lemma \ref{ToolOne}, so assume $x_1$ and $y_1$ are nonadjacent. Put $H' \DefinedAs \frac{H}{\brackets{x_2, y_2}} + x_1y_1$.  Then
$H' \prec H$ via the natural epimorphism $\funcsurj{f}{H}{H'}$.  
Suppose the desired $(k - 1)$-coloring $\pi$ of $H$ does not exist. 
Apply Lemma \ref{EpiPower} to get a $K_k$ in $H'$. Put $z \DefinedAs
\brackets{x_2, y_2}$.  By Lemma \ref{NoE2} the $K_k$ must contain $z$ and by
Lemma \ref{NoForksThatArentKnives}, the $K_k$ must contain $x_1y_1$; hence
the $K_k$ contains $x_1$, $y_1$, and $z$.  Thus
$H$ contains an induced subgraph $A \DefinedAs \join{\set{x_1, y_1}}{K_{k-3}}$
where $V(A) \subseteq N_H(x_2) \cup N_H(y_2)$.  Suppose $x_2$ has more than two neighbors in the $K_{k-3}$.  By Lemma \ref{K4sOut}, $x_2$ has exactly three neighbors $z_1, z_2, z_3$ 
in the $K_{k-3}$.  But this is impossible by Lemma \ref{ConnectedEqual3Poss} where the $K_3$ is $G[z_1, z_2, z_3]$ and $B$ is $G\brackets{V(A) \cup \set{x_2} \setminus \set{z_1, z_2, z_3}}$.

We can make a similar argument for $y_2$, so $x_2$ and $y_2$ each have at most
two neighbors in the $K_{k-3}$. Thus $k=7$ and both $x_2$ and $y_2$ have exactly
two neighbors in the $K_4$.  One of $x_2$ or $y_2$ has at least one neighbor in
$\set{x_1, y_1}$, so by symmetry we may assume that $x_2$ is adjacent to $x_1$. 
If $x_2$ is nonadjacent to $y_1$, then $\set{x_2} \cup V(A)$ induces a $\join{K_2}{\text{antichair}}$.
Otherwise, $\set{x_2} \cup V(A) \setminus \set{z}$ induces a $\join{K_2}{C_4}$ where $z$ is one of $y_2$'s neighbors in the $K_4$.
Both possibilities are impossible by Lemma \ref{MinimalK2}.
\end{proof}

\subsection{Using our new tools}

\begin{figure}[h!tb]
\begin{center}
\includegraphics[scale=.75,trim=3cm 19.2cm 2cm 2.5cm]{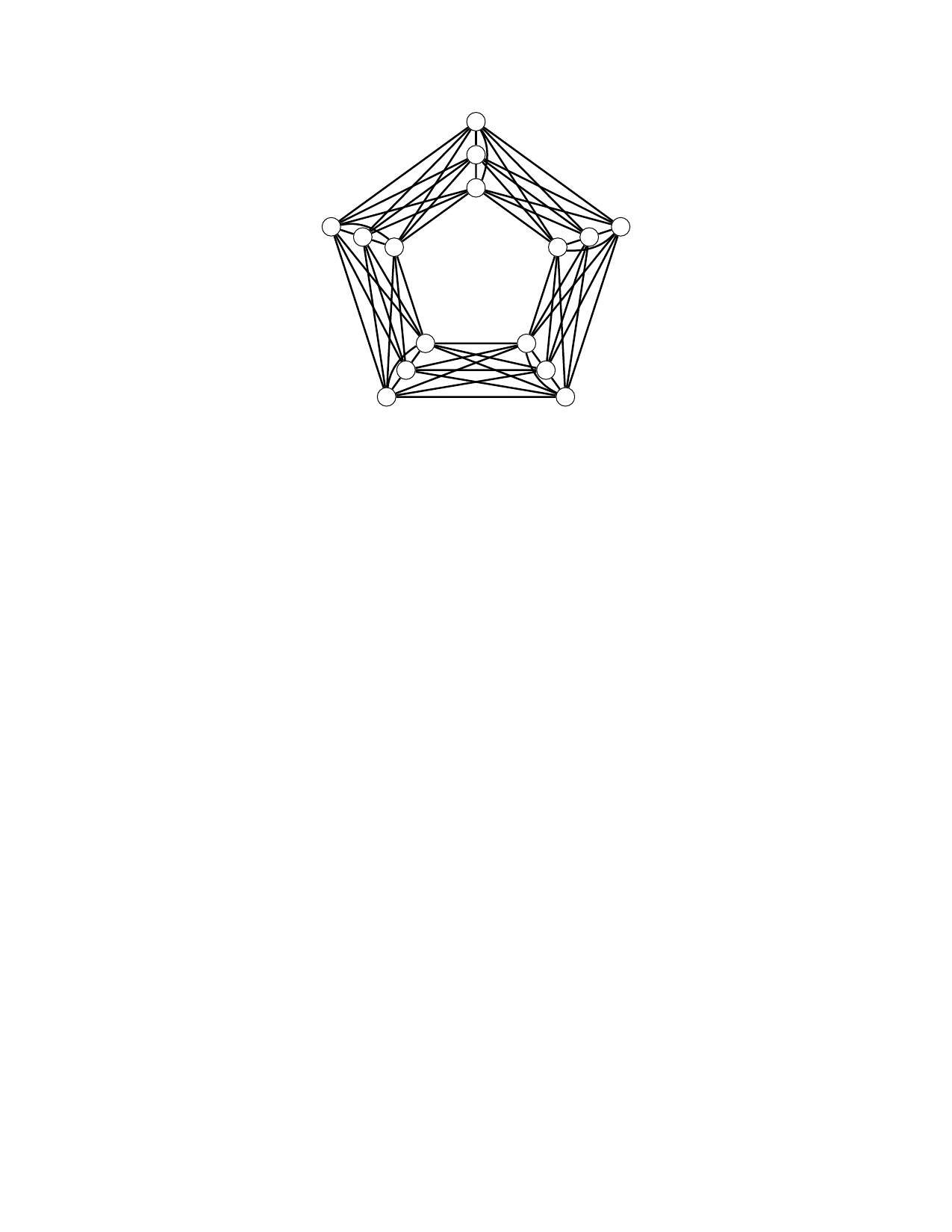}
\end{center}
\caption{The mule $M_8$, where $M_8=C_5\boxtimes K_3$.}
\label{fig:M_8}
\end{figure}

%
%

\begin{lem}\label{K3sOut}
For $k \geq 7$, the only $k$-mules containing $\join{K_3}{E_{k-3}}$
as a subgraph are $M_{7,1}$,  $M_{7,2}$ and $M_8$
(see Figures \ref{fig:M_71}, \ref{fig:M_72}, and \ref{fig:M_8}).
\end{lem}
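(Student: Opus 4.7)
Plan: Suppose $G$ is a $k$-mule with $k \geq 7$, not among $\set{M_{7,1}, M_{7,2}, M_8}$, containing $\join{K_3}{E_{k-3}}$ as a subgraph. Let $F$ be the induced subgraph on these $k$ vertices, write $C = \set{a, b, c}$ for the vertices of the $K_3$ and $S = \set{v_1, \ldots, v_{k-3}}$ for the rest, and put $D \DefinedAs G[S]$, so that $F = \join{K_3}{D}$ for some graph $D$ on $k-3$ vertices. The overall plan mirrors the proof of Lemma~\ref{K4sOut}: classify $D$ via Lemma~\ref{K_tClassification} (with $t = 3$), then dispatch each possibility in the classification.

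The dense cases are routine. If $D = K_{k-3}$, then $F = K_k$, contradicting $\omega(G) < k$. If $D$ is $K_{k-3}$ minus a single edge $xy$, then $C \cup (S \setminus \set{y})$ induces a $K_{k-1}$ in $G$, in which the vertex $y$ has $k-2 \geq 5$ neighbors, contradicting Corollary~\ref{AtMostOneEdgeIn}. If $D$ contains a $K_4$, then combining with $C$ produces a $K_7$ in $G$; for $k = 7$ this immediately contradicts $\omega(G) < k$, and for $k \geq 8$ one builds a $\join{K_4}{E_{k-4}}$ subgraph and applies Lemma~\ref{K4sOut} to conclude $G = M_{7,1}$, a contradiction.

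The hard work lies in the sparse cases, principally $D = E_{k-3}$ or $D$ missing only a few edges, for which the argument will follow the template of Lemmas~\ref{NoE3} and~\ref{NoForksThatArentKnives}. Set $H \DefinedAs G - F$, choose a $(k-1)$-coloring $\pi$ of $H$ (possibly after collapsing an independent pair via Lemma~\ref{ToolOne} or separating one via Lemma~\ref{ToolTwo}), and examine the induced list assignment $L_\pi$ on $F$. Pigeonhole among the vertices of $C$ lets us color two of them identically; the lists that remain have size at least $k - 4$, so Hall's theorem extends the coloring unless some rigidity forces a particular pair of vertices of $F$ to share a color under every $(k-1)$-coloring of $H$. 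By Lemma~\ref{UnequalColoredPair}, that rigidity produces a \emph{joiner} $x \in V(H)$ adjacent to all vertices of the pair. Adding $x$ to $F$ and iterating the argument either produces a $\join{K_4}{E_{k-4}}$ subgraph (forcing $G = M_{7,1}$, contradiction) or causes the structure to close up as $M_{7,2}$ when $k = 7$ or as $M_8$ when $k = 8$.

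The main obstacle is the $k = 8$ endgame. Since $M_8$ is a genuine $8$-mule, the argument cannot derive an outright contradiction but must pin $G$ down to $M_8$ exactly by tracking the joiner pattern. Matching the symmetric five-triangle structure of $M_8$ from the iteration will require careful application of Lemmas~\ref{ToolOne} and~\ref{ToolTwo} to successive non-adjacent pairs, verifying at each collapse against Lemmas~\ref{NoE2}, \ref{NoE3}, \ref{NoTwooks}, \ref{NoForksThatArentKnives}, and Corollary~\ref{AtMostOneEdgeIn}. For $k \geq 9$, the extra slack in the list sizes lets the Hall-theorem extension succeed outright, so no sporadic mule survives there.
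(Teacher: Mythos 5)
There is a genuine gap, and it starts at the very first step: you propose to classify $D$ via Lemma~\ref{K_tClassification} ``with $t=3$,'' but that lemma is only proved for $t\ge 4$, and its conclusion is false for $t=3$. The correct tool is Lemma~\ref{ConnectedEqual3Poss}, whose exception list for $\join{K_3}{B}$ is much richer: besides almost complete graphs it includes $\djunion{K_t}{K_{\card{B}-t}}$, $\djunion{\djunion{K_1}{K_t}}{K_{\card{B}-t-1}}$, $\djunion{E_3}{K_{\card{B}-3}}$, and (for $\card{B}\le 5$) $\join{E_3}{K_{\card{B}-3}}$. Because you are working from the wrong list, your case analysis is aimed at the wrong targets. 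Your ``dense'' cases are fine but easy, and your ``sparse'' cases ($D=E_{k-3}$ or nearly edgeless) are essentially vacuous: for $k\ge 8$ the graph $E_{k-3}$ is not on the exception list, so $\join{K_3}{E_{k-3}}$ is already $d_1$-choosable and cannot occur as an induced subgraph of a mule. Meanwhile the cases that actually carry all the difficulty --- $D$ (or, after adjoining a joiner $z$ adjacent to all of $C$, the graph $B'$) being a disjoint union of two cliques, or $K_1+K_t+K_s$, or $E_3+K_s$ --- are never engaged.

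This matters because $M_8$ does not arise the way you describe (an iteration of joiners closing up). In the correct argument one first uses Lemma~\ref{JoinerOrDifferentLists} to split into two cases: either $G-F$ admits a coloring with useful list behavior on $C$ (and then a counting argument over a maximal independent set of $B$ forces $B=\djunion{K_t}{K_{\card{B}-t}}$, which Lemma~\ref{ToolTwo} kills), or there is a single vertex $z$ adjacent to all of $C$. In the latter case one reclassifies $B'=G[V(B)\cup\set{z}]$ and handles the three clique-union shapes with Lemmas~\ref{ToolOne} and~\ref{EpiPower}; the case $B'=\djunion{K_t}{K_{\card{B'}-t}}$ with $3\le t\le\card{B'}-3$ forces $G$ to be a $5$-cycle with vertices blown up to cliques, which is $M_8$ exactly when $k=8$ and is eliminated for $k\ge 9$ by a degree count (a vertex of $Q$ or $P$ would have degree at least $k+1$), not by extra Hall-type slack. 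Without the correct classification, the joiner dichotomy, and the blown-up-$C_5$ endgame, the proof cannot be completed along the lines you sketch.
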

\begin{proof}
Suppose not and let $G$ be a $k$-mule other than $M_{7,1}$,  $M_{7,2}$ and $M_8$
containing $F \DefinedAs \join{C}{B}$ as an induced subgraph where $C
= K_3$ and $B$ is an arbitrary graph with $\card{B} = k - 3$. By Lemma
\ref{ConnectedEqual3Poss}, $B$ is: $\join{E_3}{K_{\card{B} - 3}}$, almost
complete, $\djunion{K_t}{K_{\card{B} - t}}$,
$\djunion{\djunion{K_1}{K_t}}{K_{\card{B} - t - 1}}$, or
$\djunion{E_3}{K_{\card{B} - 3}}$.  The first two options are impossible by
Lemma \ref{K4sOut}.

First, suppose there is no $z \in V(G-F)$ with $C \subseteq N(z)$.  Let $\pi$
be the $(k-1)$-coloring of $G-F$ guaranteed by Lemma
\ref{JoinerOrDifferentLists}.  Put $L \DefinedAs L_\pi$. Let $I$ be a maximal
independent set in $B$. If there are $x,y \in I$ and $c \in L(x) \cap L(y)$,
then we may color $x$ and $y$ with $c$ and then greedily complete the coloring to the rest of $F$ giving a contradiction.  Thus
we must have
%
$
k - 1 \geq \sum_{v \in I} \card{L(v)} 
 \geq \sum_{v \in I} \parens{d_F(v) - 1} 
= \sum_{v \in I}(d_B(v) + 3 - 1) 
= 2 \card{I} + \sum_{v \in I} d_B(v) 
\ge \card{B} + \card{I} 
= k-3 + \card{I}. 
$

Therefore $\card{I} \leq 2$ and hence $B$ is $\djunion{K_t}{K_{\card{B} - t}}$. 
Put $N \DefinedAs \bigcup_{w \in C} N(w) \cap V(G-F)$.  Suppose $\card{N} = 1$.  
Then there must be $z \in V(C)$ with $d(z) = k-1$. Pick nonadjacent $x_2, y_2 \in V(B)$ and put
$H \DefinedAs G\brackets{V(G-F) \cup \set{x_2, y_2}}$.  Plainly, the conditions
of Lemma \ref{ToolOne} are satisfied and hence we have a $(k - 1)$-coloring
$\gamma$ of $H$ such that $\gamma(x_2) = \gamma(y_2)$.  But then we can greedily complete this coloring to all of $G$ (coloring $z$ last), a
contradiction.

Hence $\card{N} \geq 2$.  Pick $x_1,y_1 \in N$ and nonadjacent $x_2, y_2 \in V(B)$ and put
$H \DefinedAs G\brackets{V(G-F) \cup \set{x_2, y_2}}$.  Plainly, the conditions
of Lemma \ref{ToolTwo} are satisfied and hence we have a $(k - 1)$-coloring
$\gamma$ of $H$ such that $\gamma(x_1) \neq \gamma(y_1)$ and $\gamma(x_2) =
\gamma(y_2)$.  But then we can greedily complete this coloring to all of $G$, a
contradiction.

Thus we have $z \in V(G-F)$ with $C \subseteq N(z)$.  Put $B' \DefinedAs
G\brackets{V(B) \cup \set{z}}$ and $F' \DefinedAs
G\brackets{V(F) \cup \set{z}}$.  As above, using Lemma
\ref{ConnectedEqual3Poss} and Lemma \ref{K4sOut}, we see that $B'$ is
$\djunion{K_t}{K_{\card{B'} - t}}$, $\djunion{\djunion{K_1}{K_t}}{K_{\card{B'} -
t - 1}}$ or $\djunion{E_3}{K_{\card{B'} - 3}}$.  

Suppose $B'$ is $\djunion{E_3}{K_{\card{B'} - 3}}$, say the $E_3$ is
$\set{z_1, z_2, z_3}$.  Since $k \geq 7$, there exist vertices $w_1, w_2 \in
V(B') - \set{z_1, z_2, z_3}$. Then $d_{F'}(z_3) + d_{F'}(w_1) = k$ and hence we
may apply Lemma \ref{ToolOne} to get a $(k - 1)$-coloring $\zeta$ of $G - F'$
such that there is some $c \in L_\zeta(z_3) \cap L_\zeta(w_1)$.  Now
$\card{L_\zeta(z_1)} + \card{L_\zeta(z_2)} + \card{L_\zeta(w_2)} \geq 2 + 2 + k
- 4 = k$ and hence there is a color $c_1$ that is in at least two of 
$L_\zeta(z_1)$, $L_\zeta(z_2)$ and $L_\zeta(w_2)$.  If $c_1 = c$, then $c$
appears on an independent set of size $3$ in $B'$ and we may color this set
with $c$ and greedily complete the coloring. Otherwise, $B'$ contains two
disjoint nonadjacent pairs which we can color with different colors and again
complete the coloring greedily, a contradiction.

Now suppose $B'$ is $\djunion{\djunion{K_1}{K_t}}{K_{\card{B'} -
t - 1}}$.  By Lemma \ref{NoForksThatArentKnives}, we must have $2 \leq t \leq
\card{B'} - 3$. Let $x$ be the vertex in the $K_1$, $w_1, w_2 \in V(K_t)$ and
$z_1, z_2 \in V(K_{\card{B'} - t - 1})$.  Then $d_{F'}(w_1) + d_{F'}(z_1) = k
+ 1$ and hence we may apply Lemma \ref{ToolOne} to get a $(k - 1)$-coloring
$\zeta$ of $G - F'$ such that there is some $c \in L_\zeta(w_1) \cap
L_\zeta(z_1)$.  Now $\card{L_\zeta(x)} +
\card{L_\zeta(w_2)} + \card{L_\zeta(z_2)} \geq 2 + k-1 = k+1$ and hence
there are at least two colors $c_1, c_2$ that are each in at least two of 
$L_\zeta(x)$, $L_\zeta(w_2)$ and $L_\zeta(z_2)$.  If $c_1 \neq c$ or $c_2 \neq
c$, then $B'$ contains two
disjoint nonadjacent pairs which we can color with different colors and
then complete the coloring greedily.  Otherwise $c$
appears on an independent set of size $3$ in $B'$ and we may color this set
with $c$ and greedily complete the coloring, a contradiction.

Therefore $B'$ must be $\djunion{K_t}{K_{\card{B'} - t}}$.  
By Lemma \ref{NoForksThatArentKnives}, we must have $3 \leq t \leq \card{B'} -
3$.  Thus $k \geq 8$.  Let $X$ and $Y$ be the two cliques covering $B'$.  Let
$x_1, x_2 \in X$ and $y_1, y_2 \in Y$.  Put $H \DefinedAs G\brackets{V(G-F')
\cup \set{x_1, x_2, y_1, y_2}}$ and $H' \DefinedAs \frac{H}{\brackets{x_1,
y_1}\brackets{x_2,y_2}}$.  For $i \in \{1,2\}$, $d_{F'}(x_i) + d_{F'}(y_i) =
k + 2$ and thus $\Delta(H') \leq k$. If $\chi(H') \leq k - 1$, then we have a
$(k-1)$-coloring of $H$ which can be greedily completed to all of $G$, a contradiction.  
Hence, by Lemma \ref{EpiPower}, $H'$ contains $K_k$.  Thence $H - \set{x_1,
y_1, x_2, y_2}$ contains $K_{k-2}$, call it $A$, such that $V(A) \subseteq
N(x_i) \cup N(y_i)$ for $i \in \{1,2\}$.  
Since $d_{F'}(x_i)+d_{F'}(y_i)=k+2$, we see that
$N_H(x_i) \cap N_H(y_i) = \emptyset$ for $i \in \{1,2\}$.  But we can play
the same game with the pairs $\set{x_1, y_2}$ and $\set{x_2, y_1}$.  We conclude
that $N(x_1) \cap V(A) = N(x_2) \cap V(A)$ and $N(y_1) \cap V(A) = N(y_2) \cap
V(A)$.  In fact we can extend this equality to all of $X$ and $Y$.  

Put $Q \DefinedAs N(x_1) \cap V(A)$ and $P \DefinedAs N(y_1) \cap V(A)$.  Then
we conclude that $X$ is joined to $Q$ and $Y$ is joined to $P$.  Moreover, we
already know that $X$ and $Y$ are joined to the same $K_3$.  The edges in these
joins exhaust the degrees of all the vertices, hence $G$ is a $5$-cycle with
vertices blown up to cliques.  If $k = 8$, then $\card{X} = \card{Y} = 3$ and
thus $\card{Q} = \card{P} = 3$, but then $G = M_8$, a contradiction.  So $k
\geq 9$.  Since $\card{X}+\card{Y}= k-2 \ge 7$, we have either $\card{X}\ge 4$
or $\card{Y}\ge 4$.
If $\card{X}\ge 4$, then for each $q\in Q$, we have $d(q)\ge
(k-2)-1+\card{X}\ge k+1$, contradiction.  If $\card{Y}\ge 4$, then for each
$p\in P$, we have $d(p)\ge (k-2)-1+\card{Y}\ge k+1$, contradiction.
\end{proof}

Since $\join{K_3}{E_{\Delta-3}} \subseteq K_\Delta$, Lemma \ref{K3sOut} shows
that Conjecture \ref{K3Conjecture} is equivalent to the Borodin-Kostochka
conjecture.  

\begin{cor}
\label{MainResult}
The Borodin-Kostochka conjecture is equivalent to the following:
Any graph with $\chi = \Delta \geq 9$ contains $\join{K_3}{E_{\Delta-3}}$ as a
subgraph.
\end{cor}

In the extended version of this paper \cite{mules_big}, a more detailed analysis of $d_1$-choosable joins was used to prove the following.

\begin{lem}\label{NonInducedThreeDealInMule}
Let $G$ be a $k$-mule with $k \geq 7$ other than $M_{7,1}$, $M_{7,2}$ and $M_8$. 
Let $A$ and $B$ be graphs with $3 \leq \card{A} \leq k - 3$ and $\card{B} = k - \card{A}$ such that $\join{A}{B} \unlhd G$. 
Then $A = \djunion{K_1}{K_{\card{A} - 1}}$ and $B = \djunion{K_1}{K_{\card{B} - 1}}$.
\end{lem}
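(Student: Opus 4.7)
The plan is to extend the strategy of Lemma~\ref{K3sOut} (which handles $\card{A}=3$) to all $3 \leq \card{A} \leq k-3$. Let $F \DefinedAs \join{A}{B}$. Since $G$ is vertex critical with $\chi(G) = k$, any $(k-1)$-coloring $\pi$ of $G - F$ yields a list assignment $L_\pi$ on $F$ with $\card{L_\pi(v)} \geq d_F(v) - 1$ at every $v$; if $F$ is $d_1$-choosable then $\pi$ extends to $G$ and contradicts $\chi(G) = k$. So $F$ must fail $d_1$-choosability. The first step is a structural classification of pairs $(A, B)$ with $\card{A} + \card{B} = k$ and both sides of size at least $3$ whose join is not $d_1$-choosable. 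This is the more detailed analysis of $d_1$-choosable joins carried out in the extended version \cite{mules_big}, generalizing Lemma~\ref{K_tClassification}.

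The second step is to combine this classification with the forbidden-subgraph lemmas already established in order to rule out every possibility for $(A, B)$ except the claimed $A = \djunion{K_1}{K_{\card{A}-1}}$ and $B = \djunion{K_1}{K_{\card{B}-1}}$. If $A = K_{\card{A}}$ is fully clique, any $K_4 \subseteq A$ has $B \cup (A \setminus K_4)$ of size $k-4$ in its common neighborhood, yielding $\join{K_4}{E_{k-4}}$ as a subgraph and contradicting Lemma~\ref{K4sOut}; the case $\card{A}=3$ is handled with $K_3$ and Lemma~\ref{K3sOut}. An almost complete $A$ produces a $K_{k-1}$ (once $B$ is of compatible form) with a vertex having many neighbors inside it, contradicting Corollary~\ref{AtMostOneEdgeIn}. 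Structures of the form $\djunion{\djunion{K_1}{K_t}}{K_{\card{A}-t-1}}$ produce induced $\join{K_r}{(\djunion{K_1}{K_{k-r-1}})}$, forbidden by Lemma~\ref{NoForksThatArentKnives}. The delicate case is $A = \djunion{K_t}{K_{\card{A}-t}}$ with both parts of size $\geq 2$: this is handled by choosing disjoint nonadjacent pairs $\set{x_1, y_1} \subseteq A$ and $\set{x_2, y_2} \subseteq B$ and invoking Lemma~\ref{ToolTwo} to extract a coloring that either extends or produces a blown-up $C_5$ equal to $M_8$, which is excluded by hypothesis. A symmetric analysis on $B$ gives $B = \djunion{K_1}{K_{\card{B}-1}}$.

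The main obstacle is this last case, which mirrors the bottleneck at the end of Lemma~\ref{K3sOut} that produced the exceptional mule $M_8$. Handling it for general $\card{A}$ requires applying Lemmas~\ref{ToolOne} and~\ref{ToolTwo} symmetrically to both $A$ and $B$, careful tracking of list sizes on the uncolored portion of $F$, and an application of Hall's theorem to extend the partial coloring. The hypothesis $G \notin \set{M_{7,1}, M_{7,2}, M_8}$ is essential precisely here, excluding the small extremal configurations that this technique would otherwise leave in place.
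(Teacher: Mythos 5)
The paper does not actually prove this lemma in the text at hand: it states the result and defers the proof to the extended version \cite{mules_big}, citing ``a more detailed analysis of $d_1$-choosable joins.'' Your outline correctly identifies that classification as the engine of the argument, and your clean-up step is in the spirit of how Lemma~\ref{K3sOut} is finished, so you have reconstructed the intended strategy at the level of a plan.

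As a proof, however, there is a genuine gap, and it sits exactly where you place your ``first step.'' The classification of non-$d_1$-choosable joins $\join{A}{B}$ with $\card{A},\card{B}\ge 3$ for \emph{arbitrary} $A$ is not available in this paper and cannot be assembled from Lemmas~\ref{K_tClassification}, \ref{K3Classification} and \ref{E2Classification}: those treat only the cases where one side of the join is complete, or is $E_2$ or $K_2$. When $A$ is, say, a disjoint union of two nontrivial cliques, or connected but far from complete, nothing proved here constrains $B$, so your case list for $A$ (clique, almost complete, $\djunion{\djunion{K_1}{K_t}}{K_s}$, $\djunion{K_t}{K_s}$) is not known to be exhaustive. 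Moreover, several of your reductions silently presuppose the shape of the \emph{other} side: ``an almost complete $A$ produces a $K_{k-1}$'' only once $\omega(A)+\omega(B)\ge k-1$, i.e., only after $B$ has already been pinned down to be nearly a clique, and likewise your $\djunion{\djunion{K_1}{K_t}}{K_s}$ case yields an induced $\join{K_r}{\left(\djunion{K_1}{K_{k-r-1}}\right)}$ only when the remainder of $F$ is a $(k-1)$-clique. The analysis therefore has to be run on $A$ and $B$ simultaneously, with the degree bound $d_G(v)\le k$ coupling the two sides; this joint classification is the substantial content that lives in the extended version, and your final $\djunion{K_t}{K_s}$-versus-$\djunion{K_t}{K_s}$ case via Lemma~\ref{ToolTwo} and the blown-up $C_5$, while the right idea, is likewise only gestured at.
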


Lemma~\ref{NonInducedThreeDealInMule} implies that the following (seemingly weaker) conjecture is equivalent to the Borodin-Kostochka conjecture. 

\begin{conjecture}\label{NoThreeDealEquiv}
Any graph with $\chi = \Delta \geq 9$ contains some $\join{A_1}{A_2}$ as an
induced subgraph where $\card{A_1}, \card{A_2} \geq 3$, $\card{A_1} + \card{A_2} = \Delta$
and $A_i \neq \djunion{K_1}{K_{\card{A_i} - 1}}$ for some $i \in \{1,2\}$.
\end{conjecture}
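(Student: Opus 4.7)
The plan is to verify that Conjecture \ref{NoThreeDealEquiv} is equivalent to the Borodin-Kostochka conjecture by checking each direction separately; the structural content has already been absorbed into Lemma \ref{NonInducedThreeDealInMule}, so both implications should reduce to a brief check in the style of the argument immediately following Lemma \ref{K3sOut}.

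For the forward implication, I would assume Borodin-Kostochka and take any $G$ with $\chi = \Delta \geq 9$. The bound $\chi \leq \max\{\omega, \Delta - 1\}$ then forces $\omega \geq \Delta$, so $G$ contains an induced $K_\Delta$. Splitting its vertex set into parts of sizes $3$ and $\Delta - 3 \geq 6$ realizes an induced $\join{K_3}{K_{\Delta - 3}}$ in $G$; both parts have size at least $3$, their sizes sum to $\Delta$, and $K_3 \neq \djunion{K_1}{K_2}$ (the latter has an isolated vertex), so the ``for some $i$'' requirement of the conjecture is satisfied.

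For the backward implication, I would argue by contrapositive. Assume Borodin-Kostochka fails; then for some $k \geq 9$ the class $\C{k}$ is nonempty, and by Lemma \ref{well-founded} I may pick a $k$-mule $G \in \C{k}$. Since $k \geq 9$, $G$ is none of $M_{7,1}$, $M_{7,2}$, or $M_8$, so Lemma \ref{NonInducedThreeDealInMule} applies and tells us that every induced $\join{A}{B} \unlhd G$ with $3 \leq \card{A} \leq k - 3$ and $\card{B} = k - \card{A}$ satisfies both $A = \djunion{K_1}{K_{\card{A} - 1}}$ and $B = \djunion{K_1}{K_{\card{B} - 1}}$. Consequently $G$ has no induced join $\join{A_1}{A_2}$ with $\card{A_1}, \card{A_2} \geq 3$, $\card{A_1} + \card{A_2} = \Delta$, and some $A_i \neq \djunion{K_1}{K_{\card{A_i} - 1}}$. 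Since $\chi(G) = \Delta(G) = k \geq 9$, this $G$ is a counterexample to Conjecture \ref{NoThreeDealEquiv}.

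There is essentially no obstacle at this stage of the argument, as all of the difficulty has been bundled into Lemma \ref{NonInducedThreeDealInMule}, whose proof (via the detailed $d_1$-choosability analysis of joins mentioned in the reference to the extended paper) is where the real work lives. The only thing to be careful about in the equivalence itself is matching the size constraints of Conjecture \ref{NoThreeDealEquiv} ($\card{A_1}, \card{A_2} \geq 3$ with $\card{A_1} + \card{A_2} = \Delta$) to the hypotheses of Lemma \ref{NonInducedThreeDealInMule} ($3 \leq \card{A} \leq k-3$ with $\card{B} = k - \card{A}$), which coincide precisely when $k = \Delta$.
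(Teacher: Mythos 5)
Your proposal is correct and matches the paper's (largely implicit) argument: the easy direction is exactly the observation that under Borodin--Kostochka a graph with $\chi=\Delta\ge 9$ contains $K_\Delta=\join{K_3}{K_{\Delta-3}}$ induced with $K_3\neq\djunion{K_1}{K_2}$, and the hard direction is exactly the paper's intended application of Lemma~\ref{NonInducedThreeDealInMule} to a $k$-mule obtained from a hypothetical counterexample via Lemma~\ref{well-founded}. Nothing to add.
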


The condition $A_i \neq \djunion{K_1}{K_{\card{A_i} - 1}}$ is unnatural and
by removing it we get a (possibly) weaker conjecture than the
Borodin-Kostochka conjecture which has more aesthetic appeal.

\begin{conjecture}\label{NonInducedThreeDeal}
Let $G$ be a graph with $\Delta(G) = k \geq 9$. If $K_{t, k - t} \not \subseteq G$ for all $3 \leq t \leq k - 3$, then $G$ can be $(k - 1)$-colored.
\end{conjecture}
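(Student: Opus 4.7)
The plan is to argue by contradiction. Suppose $G$ is a counterexample with $\chi(G) \geq \Delta(G) = k \geq 9$ and no $K_{t, k-t}$ subgraph for $3 \leq t \leq k-3$. Passing first to a $k$-critical induced subgraph and then to a child-minimal member of $\C{k}$ satisfying the same hypothesis, I may assume $G$ is a $k$-mule, with the plan being to derive a contradiction uniformly for all $k \geq 9$.

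The bridge to the existing machinery is the elementary observation that $K_{\card{A}, \card{B}}$ is a spanning subgraph of $\join{A}{B}$. Hence the absence of $K_{t, k-t}$ subgraphs forbids every induced subgraph of $G$ of the form $\join{A}{B}$ with $3 \leq \card{A} \leq k-3$ and $\card{A} + \card{B} = k$. When combined with Lemma \ref{NonInducedThreeDealInMule}, which already pins down every such induced join in a $k$-mule to the single rigid shape $\join{(\djunion{K_1}{K_{\card{A}-1}})}{(\djunion{K_1}{K_{\card{B}-1}})}$, the hypothesis rules out even these. Together with Lemmas \ref{K4sOut}, \ref{K3sOut}, and Corollary \ref{AtMostOneEdgeIn}, this imposes an extremely rigid picture on $G$.

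With medium induced joins unavailable, the strategy is to locate a large clique $K$ in $G$ (of size between $5$, by Theorem \ref{BorodinKostochkaBK}, and $k-1$, by $\omega(G) < k$) and analyze its common neighborhood $N$. I would then apply the collapsing tools of Lemmas \ref{ToolOne} and \ref{ToolTwo} to carefully chosen pairs of nonadjacent vertices in or near $N$. Each such identification produces a child $A$ of $G$; by Lemma \ref{EpiPower}, either $A$ is $(k-1)$-colorable, yielding a $(k-1)$-coloring of $G$ and a contradiction, or $A$ contains a $K_k$, which must then lift to a substructure in $G$ that supplies a $\join{A'}{B'}$ of medium size, in turn containing a forbidden $K_{t, k-t}$.

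The main obstacle is controlling the lifts. The exclusion lemmas of this paper all proceed via a single or double collapse whose lift is pinned down by Lemma \ref{EpiPower} to a very specific induced subgraph, and the work lies in showing that this subgraph is already forbidden. Our hypothesis enlarges the list of forbidden substructures but does not by itself say which pair to collapse; to finish the argument one presumably needs a multi-pair variant of Lemma \ref{ToolTwo} whose lift is precisely a $\join{A'}{B'}$ of the required medium size. Designing such a collapsing pattern, while keeping the maximum degree of the child at most $k$ so that Lemma \ref{EpiPower} fires, is where the real work lies, and is presumably why the statement is posed as a conjecture rather than proved.
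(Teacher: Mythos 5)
There is nothing in the paper to compare your argument against: the statement you were asked to prove is Conjecture~\ref{NonInducedThreeDeal}, which the authors explicitly leave open. They do not prove it, and immediately afterwards they only \emph{conjecture} that it is equivalent to the Borodin--Kostochka conjecture (in contrast to Conjecture~\ref{NoThreeDealEquiv}, whose equivalence they do establish via Lemma~\ref{NonInducedThreeDealInMule}). Your proposal, to its credit, ends by conceding that the decisive step is missing, so it is a research plan rather than a proof; as such it cannot be accepted.

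Beyond the acknowledged gap, the opening reduction is already unsound. You cannot ``pass to a child-minimal member of $\C{k}$ satisfying the same hypothesis'' and then treat it as a $k$-mule. A $k$-mule is a graph that is minimal in all of $\C{k}$ under $\prec$, and children are obtained by adding edges or contracting independent sets in induced subgraphs --- operations that can create $K_{t,k-t}$ subgraphs. So a $\prec$-minimal element of the subfamily $\setbs{G \in \C{k}}{K_{t,k-t} \not\subseteq G \text{ for } 3 \le t \le k-3}$ need not be a $k$-mule, and none of Lemmas~\ref{EpiPower}, \ref{K4sOut}, \ref{K3sOut}, \ref{ToolOne}, \ref{ToolTwo}, or \ref{NonInducedThreeDealInMule} applies to it; conversely, the $k$-mule lying below your counterexample in the child order need not satisfy the $K_{t,k-t}$-free hypothesis, so it is not a counterexample to the conjecture. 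This non-heredity of the hypothesis is precisely why the whole mule machinery does not transfer. A second, independent problem is that even for a genuine $k$-mule, Lemma~\ref{NonInducedThreeDealInMule} does not forbid $K_{t,k-t}$ subgraphs: the permitted induced join $\join{\parens{\djunion{K_1}{K_{\card{A}-1}}}}{\parens{\djunion{K_1}{K_{\card{B}-1}}}}$ contains $K_{\card{A},\card{B}}$ as a spanning subgraph, and $K_{t,k-t}$ can also occur non-induced. So the hypothesis of Conjecture~\ref{NonInducedThreeDeal} excludes strictly more than the mule lemmas do, which is exactly why its equivalence to Borodin--Kostochka remains open.
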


\begin{conjecture}
Conjecture \ref{NonInducedThreeDeal} is equivalent to the Borodin-Kostochka
conjecture.
\end{conjecture}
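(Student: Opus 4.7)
The plan is to prove each direction of the equivalence in turn. The forward direction is immediate: if $G$ has $\Delta(G) = k \geq 9$ and is not $(k-1)$-colorable, then any $k$-critical subgraph $G'$ must have $\Delta(G') = k$ by Brooks' theorem and $\omega(G') < k$ (otherwise $K_k \subseteq G$ contains $K_{3,k-3}$, against the hypothesis of Conjecture~\ref{NonInducedThreeDeal}), so $G' \in \C{k}$ violates the Borodin--Kostochka conjecture.

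For the reverse direction, assume Conjecture~\ref{NonInducedThreeDeal} and suppose for contradiction that $G$ is a $k$-mule with $k \geq 9$. Since $\chi(G) = k$, $G$ is not $(k-1)$-colorable, so $G$ contains $K_{t,k-t}$ as a subgraph for some $3 \leq t \leq k-3$. The induced subgraph $F$ on these $k$ vertices is a join $\join{A}{B}$ with $\card{A} = t$ and $\card{B} = k - t$, and Lemma~\ref{NonInducedThreeDealInMule} forces $A = \djunion{K_1}{K_{t-1}}$ and $B = \djunion{K_1}{K_{k-t-1}}$. Thus the task reduces to the following key claim: \emph{for $k \geq 9$ and $3 \leq t \leq k-3$, no $k$-mule contains $\join{(\djunion{K_1}{K_{t-1}})}{(\djunion{K_1}{K_{k-t-1}})}$ as an induced subgraph.}

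Write $A = \set{x} \cup A''$ and $B = \set{y} \cup B''$, where $A'', B''$ are cliques of sizes $t-1$ and $k-t-1$; then $A'' \cup B''$ forms a $K_{k-2}$, each of its vertices has $d_F = k-2$ and so at most two external neighbors, while $x$ (resp.\ $y$) has at most $t$ (resp.\ $k-t$) external neighbors. Fix a $(k-1)$-coloring $\pi$ of $H \DefinedAs G - F$, obtaining list sizes $\card{L_\pi(x)} \geq k-t-1$, $\card{L_\pi(y)} \geq t-1$, and $\card{L_\pi(v)} \geq k-3$ on $A'' \cup B''$. Apply Lemma~\ref{JoinerOrDifferentLists} to $F$ with $C \DefinedAs \setb{v}{V(F)}{d(v) - d_F(v) \leq 1}$. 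Since $t \geq 3$ and $t \leq k-3$, neither $x$ nor $y$ lies in $C$, so $C \subseteq A'' \cup B''$; the second alternative cannot occur because $G$ has minimum degree $k-1$, forcing every $v \in C$ to have outside degree at least~$1$. In the first alternative, $\pi$ is chosen so that $L_\pi(v_1) \neq L_\pi(v_2)$ for some $v_1, v_2 \in C$; since $\card{L_\pi(v_i)} \geq k - 2$ we get $\card{L_\pi(v_1) \cup L_\pi(v_2)} \geq k - 1$, and one colors $x$ and $y$ with carefully chosen distinct colors and completes the proper coloring of $A'' \cup B''$ via Hall's theorem, contradicting $\chi(G) = k$. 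In the third alternative there exists $z \in V(H)$ with $C \subseteq N(z)$; when $C = A'' \cup B''$, the set $C \cup \set{z}$ forms a $K_{k-1}$ in $G$, and Corollary~\ref{AtMostOneEdgeIn} forbids $x$ or $y$ from having two or more neighbors in this $K_{k-1}$, contradicting $\card{A''}, \card{B''} \geq 2$.

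The main obstacle is the third alternative in the residual case $C \subsetneq A'' \cup B''$, where some vertices of $A'' \cup B''$ have two external neighbors and the previous $K_{k-1}$ argument breaks. The plan here is to combine the contraction tools (Lemmas~\ref{ToolOne} and~\ref{ToolTwo}) with the pigeonhole and edge-addition arguments inside Lemma~\ref{NoForksThatArentKnives}: one carefully selects independent pairs drawn from $\set{x} \cup (A'' \setminus C)$ and $\set{y} \cup (B'' \setminus C)$, merges them and adds a few safe edges in $H + z$, and applies Lemma~\ref{EpiPower} so that the resulting child is either $(k-1)$-colorable---yielding an extension to $F$ by Hall's theorem---or contains $K_k$ minus a small edge set, which in turn exhibits a forbidden $\join{K_r}{(\djunion{K_1}{K_{k-r-1}})}$ (excluded by Lemma~\ref{NoForksThatArentKnives}) or a forbidden $\join{K_3}{E_{k-3}}$ (excluded by Lemma~\ref{K3sOut}). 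Closing every subcase---especially at the boundary values $t \in \set{3, k-3}$, where $\card{L_\pi(x)} + \card{L_\pi(y)} \geq k - 2$ is tight, and when $\card{C}$ is very small---is the principal technical hurdle.
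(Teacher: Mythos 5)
You are attempting to prove a statement that the paper itself does \emph{not} prove: it is stated as an open conjecture, and the authors explicitly flag that dropping the condition $A_i \neq \djunion{K_1}{K_{\card{A_i}-1}}$ from Conjecture~\ref{NoThreeDealEquiv} yields a ``(possibly) weaker'' statement whose equivalence to Borodin--Kostochka they cannot establish. Your setup is correct and matches the intended strategy: the forward direction is routine, and in the reverse direction the reduction to a $9$-mule, the extraction of an induced $\join{A}{B}$ from the $K_{t,k-t}$, and the application of Lemma~\ref{NonInducedThreeDealInMule} to force $A = \djunion{K_1}{K_{t-1}}$ and $B = \djunion{K_1}{K_{k-t-1}}$ are all sound. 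You have therefore correctly isolated the key claim. The problem is that you do not prove it, and the part you leave open is exactly the content of the open conjecture.

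Concretely, your appeal to Lemma~\ref{JoinerOrDifferentLists} only has force when $C$ is large: if every vertex of $A'' \cup B''$ has two external neighbors, then $C = \emptyset$ and all three alternatives of that lemma are vacuous (the third holds trivially), so the lemma yields nothing; more generally, whenever $C \subsetneq A'' \cup B''$ your $K_{k-1}$ argument via Corollary~\ref{AtMostOneEdgeIn} does not apply. Your final paragraph acknowledges this and offers only a ``plan'' --- selecting pairs, adding ``a few safe edges,'' and invoking Lemmas~\ref{ToolOne}, \ref{ToolTwo}, \ref{EpiPower}, \ref{NoForksThatArentKnives}, and \ref{K3sOut} --- without specifying the contractions, verifying the degree hypotheses ($\Delta(D) \le k$), or enumerating the graphs $K_k - E(A)$ that can arise; this is precisely the kind of case analysis that constitutes the whole difficulty, and it is not carried out. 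A secondary issue: in your ``first alternative'' case, after coloring $x$ and $y$ the lists of $v_1, v_2$ can each lose a (different) color, so $\card{L'(v_1) \cup L'(v_2)}$ may drop to $k-3$ and Hall's condition on the $K_{k-2}$ can fail; with $\card{L_\pi(x)} \ge k-t-1$ and $\card{L_\pi(y)} \ge t-1$ possibly as small as $2$, the claim that the colors can always be ``carefully chosen'' needs an actual argument. As it stands the proposal is a correct framing of the problem plus an unproven core claim, not a proof.
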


Perhaps it would be easier to attack Conjecture \ref{NonInducedThreeDeal} with
the condition $3 \leq t \leq k - 3$ in the hypothesis
replaced by $2 \leq t \leq k - 2$.  We are unable to
prove even this weakened version of Conjecture~\ref{NonInducedThreeDeal}. 
Making this change and bringing $k$ down to $5$
gives Conjecture \ref{NonInducedTwoDeal}, which, if true, would imply the
remaining two cases of Gr\"unbaum's girth problem for graphs with girth at
least 5.  Specifically, Gr\"unbaum conjectured that for every $k\ge 3$ and
$g\ge 4$ there exists a $k$-chromatic $k$-regular graph with no cycle of length
less than $g$.  For most pairs $(k,g)$ the problem has now been resolved.  For
further discussion and more references, see Problem 7.2
of~\cite{GraphColoringProblems}.

\begin{conjecture}\label{NonInducedTwoDeal}
Let $G$ be a graph with $\Delta(G) = k \geq 5$. If $K_{t, k - t} \not \subseteq
G$ for all $2 \leq t \leq k - 2$, then $G$ can be $(k - 1)$-colored.
\end{conjecture}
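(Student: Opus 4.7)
The plan is to treat Conjecture \ref{NonInducedTwoDeal} as a natural strengthening of Conjecture \ref{NonInducedThreeDeal}, combining the mule machinery of Section \ref{MulesSection} with a Borodin--Kostochka-style reduction when $\Delta$ is large enough. A counterexample $G$ would have $\Delta(G)=k\ge 5$, contain no $K_{t,k-t}$ as a subgraph for any $2\le t\le k-2$, and satisfy $\chi(G)\ge k$. Since $K_{2,k-2}\not\subseteq G$ already forbids $K_k\subseteq G$, Brooks' Theorem forces $\chi(G)=k$ and $\omega(G)<k$, placing $G$ in $\C{k}$.

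For $k\ge 9$ the conjecture is an immediate consequence of Borodin--Kostochka itself, which gives $\chi(G)\le \max\{\omega(G),\Delta(G)-1\}=k-1$, contradicting $\chi(G)\ge k$; so the first step is to dispose of $k\ge 9$ modulo a proof of BK (the equivalence claim preceding Conjecture \ref{NonInducedTwoDeal} in the paper), and then attack $5\le k\le 8$ directly. For these small values I would fix $k$, take $G$ to minimize $|V(G)|$ among counterexamples, and then maximize $|E(G)|$ subject to remaining a counterexample; because the no-$K_{t,k-t}$ hypothesis is not preserved under arbitrary edge additions, the child order must be relativized to the counterexample class itself. On such a relativized mule, the tool lemmas of Section \ref{MulesSection} still apply whenever the contractions or edge insertions they invoke do not create a forbidden $K_{t,k-t}$. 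The strengthened hypothesis forbids every induced $\join{A}{B}$ with $|A|,|B|\ge 2$ and $|A|+|B|=k$, which rules out not only the configurations handled in Lemmas \ref{NoE2}--\ref{K3sOut} but also the exceptional mules $M_{6,1}$, $M_{7,1}$, $M_{7,2}$, $M_8$, each of which visibly contains a $K_{t,k-t}$ subgraph; combined with Lemma \ref{K_tClassification} one hopes to force the local structure around any $K_{k-1}$ to be so rigid that a direct contradiction emerges.

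The main obstacle is $k=5$, and to a lesser extent $k=6$. At $k=5$ every ``$k\ge 6$'' or ``$k\ge 7$'' hypothesis throughout Section \ref{MulesSection} fails, so essentially none of the edge-addition or independent-set-contraction tricks of Lemmas \ref{ToolOne} and \ref{ToolTwo} carry enough slack --- the counting inequalities used to produce a repeated color or a pair of differing lists simply break at low $\Delta$. Moreover, the conjecture is by design on the frontier of the open Gr\"unbaum girth-$5$ coloring problem, so one cannot expect to finish it with routine list-coloring estimates. I expect $k=5$ to demand a genuinely new idea, perhaps a discharging argument exploiting the absence of $K_{2,3}$ to limit dense local configurations, or a probabilistic argument for small $\Delta$ refining Reed's method; this is consistent with the authors' own admission that they cannot settle the conjecture.
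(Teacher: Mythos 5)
The statement you are asked about is not a theorem of the paper at all: it is Conjecture~\ref{NonInducedTwoDeal}, and the authors state explicitly that they are unable to prove it (indeed they cannot even prove the weaker Conjecture~\ref{NonInducedThreeDeal}, nor its claimed equivalence to Borodin--Kostochka). There is therefore no proof in the paper to compare against, and your proposal, read as a proof, has a total gap: the case $k\ge 9$ is ``disposed of'' only modulo the Borodin--Kostochka conjecture, which is open, and the cases $5\le k\le 8$ are left to a hoped-for rigidity argument that you yourself concede breaks down --- at $k=5$ and $k=6$ essentially every lemma of Section~\ref{MulesSection} you want to invoke carries a hypothesis $k\ge 6$ or $k\ge 7$, and the relativized mule order you introduce does not obviously inherit those lemmas, since their proofs add edges and contract independent sets without regard to preserving the no-$K_{t,k-t}$ hypothesis (Lemma~\ref{EpiPower}, the engine behind all of them, compares a child against membership in $\C{k}$, not against your restricted counterexample class). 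So nothing is actually established.

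That said, your diagnosis of the situation is accurate and consistent with the paper: the reduction of $K_k\subseteq G$ to $K_{2,k-2}\subseteq G$ is correct, the observation that the hypothesis kills all joins $\join{A}{B}$ with $\card{A},\card{B}\ge 2$ and $\card{A}+\card{B}=k$ (hence the exceptional mules) is correct, and the identification of $k=5$ as the genuinely hard frontier --- tied to Gr\"unbaum's girth problem, where the Gr\"unbaum and Brinkmann graphs show tightness at $k=4$ --- matches the authors' own framing. The honest conclusion is that this is an open problem and your text is a research plan, not a proof; it should not be presented as resolving the statement.
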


If $G$ is a graph with with $\Delta(G) = k \geq 5$ and girth at least 5, then
it contains no $K_{t, k - t}$ for all $2 \leq t \leq k - 2$ and hence
Conjecture \ref{NonInducedTwoDeal} would give a
$(k-1)$-coloring.  This conjecture would be tight since the Gr\"unbaum graph and
the Brinkmann graph are examples with $\chi = \Delta = 4$ and girth at least
5.

\section{List coloring lemmas}
\label{ListColoringSection}
In this section we use list-coloring lemmas to forbid a large class of graphs
from appearing as induced subgraphs of mules.  In each case, we assume that
such a graph
$H\lhd G$ appears as an induced subgraph of a mule $G$.  By the minimality of
$G$, we can color $G - H$ with $\Delta-1$ colors.  If $H$ can be colored
regardless of which colors are forbidden by its colored neighbors in $G -
H$, then we can clearly extend this coloring to all of $G$.  

Let $G$ be a graph.  A \emph{list assignment} to the vertices of $G$ is a
function from $V(G)$ to the finite subsets of $\mathbb{N}$.  A list assignment
$L$ to $G$ is \emph{good} if $G$ has a coloring $c$ where $c(v) \in L(v)$ for
each $v \in V(G)$.  It is \emph{bad} otherwise.  We call the collection of all
colors that appear in $L$, the \emph{pot} of $L$.  That is $Pot(L) \DefinedAs
\bigcup_{v \in V(G)} L(v)$.  
For a subgraph $H$ of $G$ we write $Pot_H(L)
\DefinedAs \bigcup_{v \in V(H)} L(v)$. For $S \subseteq Pot(L)$, let $G_S$ be
the graph $G\left[\setb{v}{V(G)}{L(v) \cap S \neq \emptyset}\right]$.  We also
write $G_c$ for $G_{\{c\}}$. 
For $\func{f}{V(G)}{\IN}$, an $f$-assignment on $G$ is an
assignment $L$ of lists to the vertices of $G$ such that $\card{L(v)} = f(v)$
for each $v \in V(G)$.  We say that $G$ is {\it $f$-choosable} if every
$f$-assignment on $G$ is good.

\subsection{Shrinking the pot}
In this section we prove a lemma about bad list assignments with minimum pot size.
Some form of this lemma has appeared independently in at least two places we
know of: Kierstead \cite{kierstead2000choosability} and Reed and Sudakov \cite{ReedSudakov}.  
We will use the following lemma frequently throughout
the remainder of this paper.  

Our approach to coloring a graph (particularly a join) will often be to
consider nonadjacent vertices $u$ and $v$ and show that their lists contain a
common color.  By the pigeonhole principle, this follows immediately when
$|L(u)|+|L(v)|>|Pot(L)|$.  Thus, it is convenient to bound the size of
$|Pot(L)|$.  Intuitively, the lemma says that when we are trying to
color a graph $G$ from a list assignment $L$, we may assume that $|Pot(L)|<|G|$.

\begin{SmallPotLemma}
Let $f: V(G) \to \{1,\ldots,|G|-1\}$.  If $G$ is not $f$-choosable, then there
is a bad list assignment $L$ such that $|L(v)|=f(v)$ for all $v\in V(G)$ and
$|Pot(L)|<|G|$.
\end{SmallPotLemma}

\begin{proof}
Fix $G$ and $f$.  Suppose to the contrary that $G$ is not $f$-choosable, but
that $G$ has an $L$-coloring whenever $|L(v)|=f(v)$ for all $v$ and
$|Pot(L)|<|G|$.
For any $U \subseteq V$ and any list assignment $L$, let
$L(U)$ denote $\bigcup_{v\in U}L(v)$.  Let $L$ be an $f$-assignment such that
$|Pot(L)| \ge |G|$ and $G$ is not $L$-colorable.  For each $U \subseteq V$, let
$g(U) = |U| - |L(U)|$.  Since $G$ is not $L$-colorable, Hall's Theorem implies
there exists $U$ with $g(U)>0$.  Choose $U$ to maximize $g(U)$.  Let $A$ be an
arbitrary set of $|G|-1$ colors containing $L(U)$.  Construct $L'$ as follows. 
For $v\in U$, let $L'(v)=L(v)$.  Otherwise, let $L'(v)$ be an arbitrary subset
of $A$ of size $f(v)$.  Now $|Pot(L')| < |G|$, so by hypothesis, $G$ has an
$L'$-coloring.  This gives an $L$-coloring of $U$.  By the maximality of
$g(U)$, for $W \subseteq (V\setminus U)$, we have $|L(W)\setminus L(U)| \ge
|W|$.  Thus, by Hall's Theorem, we can extend the $L$-coloring of $U$ to all of
$V$; this contradicts the fact that $G$ is not $L$-colorable, and hence finishes
the proof.
\end{proof}

\subsection{Degree choosability}
\label{Degree choosability}
\begin{defn}
Let $G$ be a graph and $r \in \mathbb{Z}$.  Then $G$ is \emph{$d_r$-choosable}
if $G$ is $f$-choosable where $f(v) = d(v) - r$.
\end{defn}

In the extended version of this paper \cite{mules_big}, we characterize all
graphs $\join{A}{B}$ with $|A|\ge 2$, $|B|\ge 2$ that are not $d_1$-choosable. 
Since the proof of this characterization is lengthy, here we only prove what is
necessary for our main result.

Note that a vertex critical graph with $\chi = \Delta + 1 - r$ contains no
induced $d_r$-choosable subgraph.  Since we are working to prove the
Borodin--Kostochka conjecture, we will focus on the case $r=1$ and primarily
study $d_1$-choosable graphs.  For $r=0$, we have the following well known generalization of Brooks' Theorem (see \cite{borodin1977criterion}, \cite{erdos1979choosability}, \cite{BrooksExtended}, \cite{Entringer1985367} and \cite{Hladky}).
A \emph{Gallai tree} is a graph all of whose blocks are complete graphs or odd cycles.

\begin{ClassificationOfd0}
For any connected graph $G$, the following are equivalent.
\begin{itemize}
\item $G$ is $d_0$-choosable.
\item $G$ is not a Gallai tree.
\item $G$ contains an induced even cycle with at most one chord.
\end{itemize}
\end{ClassificationOfd0}

Throughout Section~\ref{ListColoringSection}, we will often extend a partial
coloring by \emph{coloring greedily}.  By this we mean that we consider the
vertices in some order and color each vertex with a color not already used on any of
its neighbors.  In
particular, if $L$ is a $d_0$-list assignment and each vertex in our order
(except the last) has some neighbor later in the order, then we can always
greedily color all vertices but the last.  When we \emph{color $V(H_1)$
greedily toward $H_2$} (for some connected subgraphs $H_1$ and $H_2$ with
$H_2\lhd H_1$), we order the vertices of $H_1\setminus H_2$ by nonincreasing
distance from $H_2$.  At the time we consider each vertex $v$, it will have an
uncolored neighbor on a shortest path to $H_2$, so we will have a color for $v$.
Furthermore, if $L$ is a list assignment with $|L(v)|\ge d(v)$ for all $v$ and
$|L(w)|>d(w)$ for some $w$, then we can color greedily toward $w$ and color $w$
last.

We give a couple lemmas about $d_0$-assignments that will be useful in our study
of $d_1$-assignments.  The following lemma was used in \cite{BrooksExtended}.

\begin{lem}\label{d0NeighborList}
Let $L$ be a bad $d_0$-assignment on a connected graph $G$ and $x \in V(G)$ a noncutvertex. Then $L(x) \subseteq L(y)$ for each $y \in N(x)$.
\end{lem}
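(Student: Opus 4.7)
The plan is to argue by contradiction. Suppose there exist a neighbor $y$ of $x$ and a color $c \in L(x) \setminus L(y)$. I would build a proper $L$-coloring of $G$ by first coloring $x$ with $c$ and then extending to $G - x$, contradicting the assumption that $L$ is bad.

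To extend, I would define the list assignment $L'$ on $G - x$ by $L'(v) \DefinedAs L(v) \setminus \set{c}$ for $v \in N_G(x)$, and $L'(v) \DefinedAs L(v)$ otherwise. Since $x$ is a noncutvertex, $G - x$ is connected. Because $L$ is a $d_0$-assignment on $G$, we have $\card{L'(v)} \geq d_{G-x}(v)$ for every vertex $v$ of $G - x$. Moreover, since $c \notin L(y)$, removing $c$ does nothing to $L(y)$, so $\card{L'(y)} = \card{L(y)} = d_G(y) = d_{G-x}(y) + 1$; that is, $y$'s list strictly exceeds its degree in $G - x$.

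The next step is to color $G - x$ from $L'$ using a standard reverse-BFS argument. Root a BFS tree at $y$ and color the vertices in order of decreasing BFS level (breaking ties arbitrarily). Each $v \neq y$ has at least one neighbor in an earlier BFS level, namely its parent in the BFS tree, which is uncolored at the time $v$ is processed; hence at most $d_{G-x}(v) - 1$ of $v$'s neighbors have been colored, and the bound $\card{L'(v)} \geq d_{G-x}(v)$ leaves a free color for $v$. When $y$ is colored last, all $d_{G-x}(y)$ of its neighbors are already colored, but $\card{L'(y)} \geq d_{G-x}(y) + 1$, so a free color remains. Combined with $x \mapsto c$, this is a proper $L$-coloring of $G$, contradicting the badness of $L$.

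There is no real obstacle: the only point requiring any attention is to make sure the strict excess at $y$ is preserved, which is exactly why the deletion of $c$ from neighboring lists leaves $L(y)$ untouched. The reverse-BFS greedy step is precisely the engine behind the strengthening of Brooks' theorem underlying the Classification of $d_0$-choosable graphs, specialized here to use the single bonus color at $y$.
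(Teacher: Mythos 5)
Your proof is correct and follows exactly the paper's argument: color $x$ with $c$, observe that the residual assignment on the connected graph $G-x$ is a $d_0$-assignment with a strict surplus at $y$, and finish by greedy coloring toward $y$. The reverse-BFS ordering you spell out is precisely what the paper means by ``coloring greedily toward $y$.''
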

\begin{proof}
Suppose otherwise that we have $c \in L(x) - L(y)$ for some $y \in N(x)$.
Coloring $x$ with $c$ leaves at worst a $d_0$-assignment $L'$ on the connected
$H \DefinedAs G-x$ where $\card{L'(y)} > d_H(y)$.  But then we can complete the
coloring, by coloring greedily toward $y$.
\end{proof}

The following lemma is similar to a special case of the Small Pot Lemma, but the
key difference is that here we need not assume that we have a bad
$d_0$-assignment with minimum pot size.

\begin{lem}\label{d0PotColoring}
If $L$ is a bad $d_0$-assignment on a connected graph $G$, $\card{Pot(L)} < \card{G}$.
\end{lem}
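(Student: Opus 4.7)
The plan is to induct on $\card{G}$.  Since $L$ is bad, $G$ is not $d_0$-choosable, so by the Classification of $d_0$-choosable graphs stated above, $G$ is a Gallai tree; in particular, every block of $G$ is a complete graph or an odd cycle.  This structural information is what drives the whole argument.

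\textbf{Base case (single block).}  If $G$ is itself a single block, then $G$ is $K_n$, an odd cycle $C_{2k+1}$, or $K_1$, and in every case every vertex is a noncutvertex.  By Lemma~\ref{d0NeighborList}, for every edge $xy$ we have $L(x)\subseteq L(y)$ and, by symmetry, $L(y)\subseteq L(x)$, so $L(x)=L(y)$.  Since $G$ is connected, all lists coincide with a common set $S$, and $\card{Pot(L)}=\card{S}=d(v)\le\card{G}-1<\card{G}$.

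\textbf{Inductive step (multiple blocks).}  I would pick an endblock $B$ with cutvertex $v^*$.  Every vertex in $B\setminus\set{v^*}$ is a noncutvertex of $G$, and $B\setminus\set{v^*}$ is connected (either a smaller clique or a path), so Lemma~\ref{d0NeighborList} forces all these vertices to share a common list $S$ with $S\subseteq L(v^*)$, where $\card{S}$ equals their common $G$-degree.  Let $G'\DefinedAs G-(B\setminus\set{v^*})$, which is connected and smaller.  Define $L'(v^*)\DefinedAs L(v^*)\setminus S$ and $L'(u)\DefinedAs L(u)$ for every other $u\in V(G')$; since $v^*$ loses exactly $\card{S}$ neighbors when passing from $G$ to $G'$, $L'$ is a $d_0$-assignment on $G'$.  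The crux of the plan is the claim that $L'$ is bad: given any good $L'$-coloring $c'$ of $G'$ we have $c'(v^*)\notin S$, and this extends to $B\setminus\set{v^*}$ either by an SDR from $S$ in the clique case (since $\card{S}=\card{B}-1$ equals the number of vertices to color) or by alternating the two colors of $S$ along the path $B\setminus\set{v^*}$ in the odd-cycle case, where the parity works out precisely because $c'(v^*)\notin S$ means the endpoints of the path are not forced to agree.

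By the induction hypothesis applied to $L'$, $\card{Pot(L')}\le\card{G'}-1$.  Since any color in $Pot(L)\setminus Pot(L')$ either lay only in some list $L(x)=S$ for $x\in B\setminus\set{v^*}$ or in $L(v^*)\cap S$, we have $Pot(L)\setminus Pot(L')\subseteq S$, so $\card{Pot(L)}\le\card{G'}-1+\card{S}$.  In the clique case $\card{S}=\card{B}-1=\card{G}-\card{G'}$, which gives $\card{Pot(L)}\le\card{G}-1$; in the odd-cycle case $\card{S}=2$ and $\card{G'}\le\card{G}-2$, which also gives $\card{Pot(L)}\le\card{G}-1$.  Either way $\card{Pot(L)}<\card{G}$.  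The main obstacle is verifying that $L'$ is bad, and specifically the parity argument in the odd-cycle case: it is exactly the reason one must delete all of $S$ (rather than a single color) from $L(v^*)$ when reducing to $G'$.
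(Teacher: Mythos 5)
Your proof is correct, but it takes a genuinely different route from the paper's. You invoke the classification of $d_0$-choosable graphs to get the Gallai-tree structure, then induct by peeling off an entire endblock $B$: Lemma~\ref{d0NeighborList} forces a common list $S$ on $B-v^*$ with $S\subseteq L(v^*)$, you pass to the $d_0$-assignment $L'$ on $G'=G-(B-v^*)$ with $L'(v^*)=L(v^*)\setminus S$, verify that $L'$ is bad via the SDR/parity extension argument, and account for the pot with $Pot(L)=Pot(L')\cup S$. All of these steps check out, including the degree bookkeeping at $v^*$ and the observation that noncutvertices of an endblock have all neighbors inside the block. The paper's proof is much shorter and does not use the Gallai-tree classification at all: it takes a minimal counterexample, picks a single noncutvertex $x$, and notes that since $L(x)\subseteq L(y)$ for all $y\in N(x)$, coloring $x$ and deleting that color from its neighbors' lists yields a bad $d_0$-assignment on the connected graph $G-x$ whose pot has shrunk by at most one color, contradicting minimality. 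What your approach buys is more structural information along the way (you effectively determine the lists on each endblock), at the cost of needing the classification as input and a case analysis for the extension; the paper's argument is more elementary and is the natural companion to Lemma~\ref{d0NeighborList}, which is presumably why the authors chose it.
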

\begin{proof}
Suppose that the lemma is false and choose a connected graph $G$ together with a
bad $d_0$-assignment $L$ where $\card{Pot(L)} \geq \card{G}$ minimizing
$\card{G}$.  Plainly, $\card{G} \geq 2$.  Let $x \in G$ be a noncutvertex (any
end block has at least one).  By Lemma \ref{d0NeighborList}, $L(x) \subseteq
L(y)$ for each $y \in N(x)$. Thus coloring $x$ yields a bad $d_0$-assignment of
$G-x$ with the pot decreased by at most one, giving a smaller counterexample.  This contradiction completes the proof.
\end{proof}

Now we are able to characterize graphs $A$ such that $\join{K_t}{A}$ is
$d_1$-choosable, when $t\ge 4$.

\begin{defn}
A graph $G$ is \emph{almost complete} if $\omega(G) \geq \card{G} - 1$.
\end{defn}

\begin{lem}\label{ConnectedAtLeast4Poss}
\label{K_tClassification}
For $t \geq 4$, $\join{K_t}{B}$ is $d_1$-choosable unless 
$B$ is almost complete;
or $t = 4$ and $B$ is $E_3$ or $K_{1,3}$; or $t = 5$ and $B$ is $E_3$.
\end{lem}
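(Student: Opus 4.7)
The plan is to assume for contradiction that $G = \join{K_t}{B}$ admits a bad $d_1$-assignment $L$; by the Small Pot Lemma we may take $\card{Pot(L)} \le t+\card{B}-1$. Every vertex of $K_t$ has list size $t+\card{B}-2$, and every $v\in B$ has $\card{L(v)} = d_B(v)+t-1$. The strategy is to properly color $B$ using at most $\card{B}-1$ distinct colors---that is, to save one color on some nonadjacent pair in $B$---so that each vertex of $K_t$ retains at least $t-1$ available colors; a Hall-type argument then finishes $K_t$.

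The key observation is that for nonadjacent $u,v\in B$,
\[
\card{L(u)}+\card{L(v)} \;=\; d_B(u)+d_B(v)+2(t-1),
\]
which exceeds $\card{Pot(L)}$ whenever $d_B(u)+d_B(v)\ge \card{B}-t$. Thus any nonadjacent pair whose total $B$-degree meets this bound automatically shares a color. I would first locate such a pair, color it monochromatically with some color $c\in L(u)\cap L(v)$, and extend by a greedy or Hall argument to the remaining vertices of $B$ (whose lists have ample slack once $c$ is removed). Once $B$ is colored using at most $\card{B}-1$ colors, each $K_t$-vertex retains at least $t-1$ available colors, so we can complete $K_t$ unless all of these residual lists coincide; the minimality of $\card{Pot(L)}$ together with $t\ge 4$ should be enough to rule out that degeneracy by another pigeonhole pass.

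The main obstacle is the case in which no suitable nonadjacent pair in $B$ exists, or the greedy extension fails. This forces $B$ to have very few non-edges or to have all non-edges concentrated at one vertex. The case $\omega(B)\ge \card{B}-1$ (i.e.\ $B$ almost complete) falls out immediately as a stated exception. Otherwise $B$ has at least two non-edges, and a careful examination of $\overline{B}$ (applying the disjoint-pairs variant of the above argument whenever two independent non-edges exist) narrows down the remaining failing configurations. The delicate work is the small-case analysis for $t\in\{4,5\}$ with $\card{B}\le 4$, where the lists are tightest; here one must verify directly (essentially by hand or by constructing explicit bad assignments) that the only remaining failures are $(t,B)\in\{(4,E_3),(4,K_{1,3}),(5,E_3)\}$, matching the exception list.
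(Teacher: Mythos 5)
Your high-level strategy is the same as the paper's: invoke the Small Pot Lemma, use pigeonhole on nonadjacent pairs of $B$ to color $B$ with fewer than $\card{B}$ colors, and split on whether $B$ is almost complete, contains $E_3$, or contains two disjoint non-edges. But there is a genuine gap at the step you describe as ``we can complete $K_t$ unless all of these residual lists coincide; the minimality of $\card{Pot(L)}$ together with $t\ge 4$ should be enough to rule out that degeneracy by another pigeonhole pass.'' Saving one color on $B$ is \emph{not} sufficient, and no pigeonhole pass rules out the degeneracy in general: with $\card{Pot(L)}=t+\card{B}-1$ and every clique list missing a single pot color, coloring $B$ with $\card{B}-1$ colors can leave all $t$ clique vertices with the same residual list of size exactly $t-1$. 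This is precisely what happens in the exceptional graphs (e.g.\ $L(x_1)=\{1,2,3\}$, $L(x_2)=\{1,4,5\}$, $L(x_3)=\{2,4,5\}$ and $L(y)=\{1,\dots,5\}$ on $\join{K_4}{E_3}$), so the degeneracy cannot be dispatched abstractly --- escaping it is where the entire content of the lemma lives. The paper escapes it by one of two concrete mechanisms that your proposal never identifies: either save \emph{two} colors on $B$ (color two disjoint nonadjacent pairs with two distinct colors, or an independent triple with one color), or save one color on $B$ while also placing on some $x_i$ a color $c\in L(x_i)\setminus L(y)$ for a designated clique vertex $y$, and then finish greedily ending at $y$; when no such $c$ exists one gets $L(x_i)\subseteq L(y)$ for all $i,y$, which pins down the pot and forces a common color on a whole independent set.

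Two smaller issues compound this. First, your threshold is off: $\card{L(u)}+\card{L(v)}>\card{Pot(L)}$ requires $d_B(u)+d_B(v)\ge\card{B}-t+2$, not $\card{B}-t$. Second, the part of the argument that actually produces the exception list --- showing that when two disjoint non-edges exist their two common colors can be taken distinct except in the configurations $E_3$, $K_{1,3}$ (for $t=4$) and $E_3$ (for $t=5$) --- is entirely deferred to ``verify by hand.'' Exhibiting bad assignments for the exceptions shows they belong on the list, but the lemma requires showing every \emph{other} small configuration (e.g.\ $\join{K_4}{E_4}$, $\join{K_4}{(\djunion{K_2}{K_2})}$, the one-edge case) is $d_1$-choosable, which is the delicate pigeonhole case analysis the paper carries out for $\join{K_4}{H}$ with $H\subseteq C_4$ and for $\join{K_6}{E_3}$. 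As written, the proposal is a correct plan with the decisive steps missing.
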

\begin{proof}
Suppose the lemma is false and let $t$ and $B$ form a counterexample.
Since $B$ is not almost complete, $B$ contains either an independent set $\set{x_1, x_2, x_3}$ or two disjoint pairs of nonadjacent vertices
$\set{x_1, x_2}$ and $\set{x_3, x_4}$.  If $B$ does not contain two disjoint
pairs of nonadjacent vertices, then (by possibly moving dominating vertices
from $B$ to $K_t$) we have $B =  E_3$ and $t \ge 6$.  Thus it will suffice to
prove that $\join{K_6}{E_3}$ is $d_1$-choosable and that $\join{K_4}{H}$ is
$d_1$-choosable where $H$ is a spanning subgraph of $C_4$.  When the graph is
larger, we can greedily color all other vertices, since each will have at least
two uncolored neighbors in the clique.

Suppose $G \DefinedAs \join{K_6}{E_3}$ is not $d_1$-choosable and let $L$ be a
bad $d_1$-assignment on $G$ with $|Pot(L)|$ minimum.  By the Small Pot Lemma, $\card{Pot(L)}
\le 8$.  For every distinct pair $i,j\in\{1,2,3\}$, we have $|L(x_i)| +
|L(x_j)| = 10 > \card{Pot(L)}$, so each pair $x_i$ and $x_j$ have a common
color.  Suppose there is some vertex $y\in V(K_6)$ and $i\in\{1,2,3\}$ and
color $c$ such that $c\in L(x_i)\setminus L(y)$.  Now we can use $c$ on $x_i$,
use a common color on the two remaining $x_j$'s, then finish greedily, ending
with $y$.  Thus, we have $L(x_i)\subseteq L(y)$ for all $i$ and all $y$, which
implies that $\card{Pot(L)}=7$.
But then $|L(x_1)| + |L(x_2)|
+ |L(x_3)| = 15 > 2\card{Pot(L)}$, so by the pigeonhole principle $x_1, x_2,
x_3$ share a common color $c$.  We use $c$ on $x_1,x_2,x_3$ and complete the
coloring greedily to all of $G$, a contradiction.

Suppose $G \DefinedAs \join{K_4}{H}$ is not $d_1$-choosable for some $H$
a spanning subgraph of $C_4$ and let $L$ be a bad $d_1$-assignment on $G$ with
$|Pot(L)|$ minimum.  By the Small Pot Lemma, $\card{Pot(L)} \le 7$. Let $y$ be
an arbitrary vertex in $V(K_4)$.  If $H$ contains at least two edges, then
$|L(x_1)|+|L(x_2)|\ge 8$ and $|L(x_3)|+|L(x_4)|\ge 8$.  So $L(x_1)\cap
L(x_2)\ne \emptyset$.
Color $x_1$ and $x_2$ with a common color $c_1$.  If $|L(y)-c_1|= 5$, then we
can color $x_3$ with some color $c_2$ and $x_4$ with some color $c_3$ so that
$|L(y)-c_1-c_2-c_3|=4$ (since $|L(x_3)-c_1|+|L(x_4)-c_1|>|L(y)-c_1|$); otherwise
$\card{L(y)-c_1}=6$.  In each case, we now finish greedily, ending with $y$.

Suppose $H$ contains exactly one edge $x_1x_3$.  Similar to the previous
argument, $L(x_1)\cap L(x_2)=\emptyset$; otherwise, use a common color on $x_1$
and $x_2$, possibly save a color on $y$ via $x_3$ and $x_4$, then
finish greedily.  By symmetry, $L(x_1)=L(x_3)=\{a,b,c,d\}$ and
$L(x_2)=L(x_4)=\{e,f,g\}$.  Also by symmetry, $a$ or $e$ is missing from
$L(y)$.  So color $x_1$ with $a$ and $x_2$ and $x_4$ with $e$ and $x_3$
arbitrarily; now finish greedily, ending with $y$.  So instead $H=E_4$.  If a
common color appears on 3 vertices of $S$, use it there, then finish greedily.
If not, then by pigeonhole, at least 5 colors appear on pairs of vertices.  
Color two disjoint pairs, each with a common color.  Now finish the coloring
greedily.
\end{proof}

For characterizing graphs $\join{K_3}{A}$ which are $d_1$-choosable, we would
like to take a similar approach to what we did in the previous lemma, but it
does not quite work.  When $t \ge 4$, each vertex $v$ has $|L(v)|\ge d_A(v)+3$.  
When $t=3$, we only have $|L(v)|\ge d_A(v)+2$.  As a result, we are not
guaranteed as much overlap between lists of nonadjacent vertices in $A$.  To
overcome this problem and guarantee a larger overlap among these lists, we need
to improve our upper bound on $\card{Pot(L)}$.  In the next section we do this.

\subsection{Shrinking the pot further}
The Small Pot Lemma implies that if $\join{A}{B}$ is not $d_1$-choosable, then
$\join{A}{B}$ has a bad $d_1$-assignment $L$ such that $|Pot(L)|\le |A|+|B|-1$.
In this section, we study conditions under which we can assume $|Pot(L)|\le |A|+|B|-2$.  
In the previous section, we characterized graphs $\join{K_t}{B}$ which are
$d_1$-choosable when $t\ge 4$ (in fact, the same proof shows $d_1$-choosability
if the $K_t$ is replaced by any connected graph with at least 4 vertices).
Thus, for the present section, the reader
should keep in mind the case $|A|=3$.
In the following section, our results here help us to find nonadjacent
vertices with a common color, and ultimately to characterize graphs
$\join{K_3}{B}$ that are $d_1$-choosable.

\begin{lem}\label{ConnectedPot}
Let $A$ be a connected graph and $B$ an arbitrary graph such that $\join{A}{B}$
is not $d_1$-choosable.  Let $L$ be a bad $d_1$-assignment on
$\join{A}{B}$ with $|Pot(L)|$ minimum.  If $B$ is colorable from $L$ using at most $\card{B} - 1$
colors, then $\card{Pot(L)} \leq \card{A} + \card{B} - 2$.
\end{lem}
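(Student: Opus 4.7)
The Small Pot Lemma gives $\card{Pot(L)}\le\card{A}+\card{B}-1$, so the plan is to assume $\card{Pot(L)}=\card{A}+\card{B}-1$ and derive a contradiction in three phases.

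First I would fix a proper coloring $\phi$ of $B$ from $L$ with $S\DefinedAs\phi(V(B))$ satisfying $\card{S}\le\card{B}-1$, and set $L'(v)\DefinedAs L(v)\setminus S$ for $v\in A$. Then $\card{L'(v)}\ge d_A(v)$, and because $L$ is bad the graph $A$ is not $L'$-colorable. If some $v_0\in A$ had $\card{L'(v_0)}>d_A(v_0)$, I could use the connectedness of $A$ to order its vertices so that $v_0$ is last and every other vertex has a later neighbor, then greedily color $A$ from $L'$---a contradiction. Hence $\card{L'(v)}=d_A(v)$ for every $v\in A$, which forces $S\subseteq L(v)$ for every $v\in A$ and $\card{S}=\card{B}-1$. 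Now $L'$ is a bad $d_0$-assignment on the connected graph $A$, so Lemma~\ref{d0PotColoring} gives $\card{Pot(L')}\le\card{A}-1$; since $S\subseteq L(v)$ for all $v\in A$, $Pot(L')=Pot_A(L)\setminus S$, so $\card{Pot_A(L)}\le\card{A}+\card{B}-2$.

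It remains to show $Pot(L)=Pot_A(L)$. Otherwise pick $c\in Pot_B(L)\setminus Pot_A(L)$, let $T\DefinedAs\set{u\in B:c\in L(u)}$, and for each connected component $C$ of $B[T]$ select a common swap color $c^*_C\in Pot(L)\setminus\bigcup_{u\in C}L(u)$. Define $L^*(u)\DefinedAs(L(u)\setminus\set{c})\cup\set{c^*_C}$ for $u\in C$ and $L^*\DefinedAs L$ elsewhere; then $L^*$ is a $d_1$-assignment with $Pot(L^*)\subseteq Pot(L)\setminus\set{c}$, so by minimality of $\card{Pot(L)}$ the assignment $L^*$ is good. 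Let $\psi$ be an $L^*$-coloring and $V^*\DefinedAs\set{u\in T:\psi(u)=c^*_{C(u)}}$, where $C(u)$ denotes the $B[T]$-component of $u$. Two vertices of $V^*$ in the same component share the color $c^*_{C(u)}$ under $\psi$ and hence are non-adjacent in $B$, while vertices in different components of $B[T]$ cannot be adjacent in $B$ at all (such an edge would lie in $B[T]$), so $V^*$ is independent in $B$. A case check gives $\psi(v)\ne c$ for every $v\in N(V^*)\setminus V^*$ (if $v\in A$, $\psi(v)\in L(v)\subseteq Pot_A(L)\not\ni c$; if $v\in B\setminus T$, $c\notin L(v)=L^*(v)$; if $v\in T\setminus V^*$, $\psi(v)\in(L(v)\setminus\set{c})\cup\set{c^*_{C(v)}}$, which excludes $c$). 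Simultaneously recoloring every vertex of $V^*$ with $c$ then yields a proper $L$-coloring of $\join{A}{B}$, contradicting $L$ bad.

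The main obstacle is ensuring that the swap colors $c^*_C$ exist, i.e., that $\bigcup_{u\in C}L(u)\subsetneq Pot(L)$ for each component $C$ of $B[T]$. For singleton $C=\set{u}$ this is immediate from $\card{L(u)}=d_B(u)+\card{A}-1<\card{Pot(L)}$; for larger components I would first look for $c^*_C$ inside $Pot_A(L)$, exploiting the bound $\card{Pot_A(L)}\le\card{A}+\card{B}-2$ already established. If a component $C$ nonetheless has $Pot(L)\subseteq\bigcup_{u\in C}L(u)$, I expect to need a more delicate analysis: forcing structural conditions on $C$ from the counting inequality $\sum_{u\in C}(d_B(u)+\card{A}-1)\ge\card{Pot(L)}$, and then adjusting the swap strategy (e.g.\ swapping a different color than $c$, or removing more than one color from a targeted list at once) so as to shrink the pot while preserving badness.
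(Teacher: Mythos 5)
Your Phase~1 is correct, and it cleanly recovers (indeed slightly strengthens) the first half of the paper's argument: assuming $\card{Pot(L)}=\card{A}+\card{B}-1$, every color used by $\phi$ on $B$ must lie in every list on $A$, and Lemma~\ref{d0PotColoring} then yields $\card{Pot_A(L)}\le\card{A}+\card{B}-2$, so some $c\in Pot_B(L)\setminus Pot_A(L)$ exists. Your Phase~2 recoloring scheme is also sound \emph{conditional on} the swap colors $c^*_C$ existing for every component $C$ of $B[T]$. But that existence is exactly the crux, and you do not establish it: nothing in the hypotheses prevents a component $C$ (say a large clique of $B$ whose lists all contain $c$) from satisfying $\bigcup_{u\in C}L(u)=Pot(L)$, in which case no swap color exists. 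Neither of your proposed repairs (swapping a color other than $c$, or deleting several colors from one list) obviously preserves both the $d_1$-condition on $L^*$ and the strict decrease of the pot that the minimality argument requires. So as written this is a genuine gap --- a missing idea, not a routine verification.

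The missing idea, and the way the paper sidesteps the component analysis entirely, is to \emph{localize} the colors of $S\DefinedAs Pot(L)\setminus Pot_A(L)$ before attempting any swap. Fix the coloring $\pi$ of $B$ with at most $\card{B}-1$ colors. If $\pi$ used fewer than $\card{B}-1$ colors, or used a color outside $Pot_A(L)$, then $A$ would be left with a $d_{-1}$-assignment and could be colored greedily; hence $\pi$ uses exactly $\card{B}-1$ colors of $Pot_A(L)$, so exactly one nonadjacent pair $x,y$ of $B$ shares a color. Recoloring any singleton-class vertex of $B$ with a color of $S$ (or recoloring $x$ and $y$ with a common color of $S$, or with two distinct colors of $S$) would again produce a $d_{-1}$-assignment on $A$; this forces $S\subseteq L(x)$ and $S\cap Pot_{B-x}(L)=\emptyset$. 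Each $c\in S$ therefore occupies exactly one list, the swap is performed on the single vertex $x$, and the question of swap colors for whole components never arises. To complete your argument you would need an analogous control on $T$ --- i.e., a proof that your chosen $c$ appears in only one list --- which is precisely the step your proposal omits.
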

\begin{proof}
To get a contradiction suppose that $\card{Pot(L)} \geq \card{A} + \card{B} - 1$
and that $B$ is colorable from $L$ using at most $\card{B} - 1$ colors.  If
$\card{Pot_A(L)} = \card{Pot(L)}$, then coloring $B$ with at most
$|B|-1$ colors leaves at worst a $d_0$-assignment $L'$ on $A$ with
$\card{Pot(L')} \ge \card{A}$ (here $d_0$ refers to the degrees in $A$).  
Hence the coloring can be completed to $A$ by Lemma \ref{d0PotColoring}, a
contradiction.

Thus we may assume that $\card{Pot_A(L)} \le \card{Pot(L)} - 1$. Put
$S\DefinedAs Pot(L) - Pot_A(L)$.  Since $S$ is nonempty, choose an arbitrary
color $c\in S$.  Let $\pi$ be a coloring of $B$ from $L$ using at most
$\card{B} - 1$ colors, and color $B$ with $\pi$.  Now $\pi$ uses $|B|-1$ colors
on $B$, and none of these colors are in $S$, for otherwise $A$ has at worst a
$d_{-1}$-assignment.
In other words, all vertices of $B$ are colored
with distinct colors, except for one nonadjacent pair $x,y$.  

If we can change the color of any vertex of $B-x-y$ to some color in $S$, then 
again $A$ has at worst a $d_{-1}$-assignment.  
Thus, by symmetry (between $x$ and $y$),
$S \subseteq L(x)$ and $Pot_{B-x}(L) \cap S = \emptyset$; in particular, $c\in
L(x)\setminus Pot_{B-x}(L)$.  
Since $d(x) \le |A| + |B| - 1$, we have $\card{L(x)} \le |A| + |B| - 2$, so
there is $c' \in Pot(L) - L(x)$.  Consider the list assignment $L'$ on $G$
defined by $L'(v) = L(v)$ for $v \ne x$ and $L'(x) = L(x) \cup \set{c'} -
\set{c}$.  By minimality of $L$, $G$ has an $L'$-coloring from which we get an
$L$-coloring by using $c$ on $x$, a contradiction.
\end{proof}

To apply Lemma~\ref{ConnectedPot}, in the next lemma we give a condition under
which $B$ can be colored with at most $|B|-1$ colors.  In the lemma after that,
we show that the condition holds whenever $B$ is not the disjoint union of at
most two complete subgraphs.  

\begin{lem}\label{MaxindependentCondition}
Let $A$ be a graph with $\card{A} \geq 2$, $B$ an arbitrary graph and $L$ a
$d_1$-assignment on $\join{A}{B}$.  If $B$ has an independent set $I$ such that
$(\card{A} - 1)\card{I} + \sum_{v\in I}d_B(v) > \card{Pot(L)}$, then $B$ can be colored from $L$ using at most $\card{B} - 1$ colors.
\end{lem}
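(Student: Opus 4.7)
The plan is to extract, via pigeonhole from the hypothesis, two vertices of $I$ sharing a color, assign them that common color, and extend to the rest of $B$ greedily. Since each $v \in I$ satisfies $\card{L(v)} = d(v) - 1 = \card{A} + d_B(v) - 1$, and since $I$ is independent in $B$, summing over $I$ yields
\[
\sum_{v \in I}\card{L(v)} = (\card{A}-1)\card{I} + \sum_{v\in I}d_B(v) = (\card{A}-1)\card{I} + \card{E_B(I)} > \card{Pot(L)},
\]
where $\card{E_B(I)}$ denotes the number of edges of $B$ incident to $I$. So pigeonhole furnishes a color $c$ and distinct $u, v \in I$ with $c \in L(u) \cap L(v)$.

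I would then assign $c$ to both $u$ and $v$ and define $L'$ on $B - \set{u, v}$ by deleting $c$ from the list of each $w \in N_B(u) \cup N_B(v)$. The crucial step is to verify the slack inequality $\card{L'(w)} \geq d_{B - \set{u, v}}(w) + 1$ for every $w \in V(B) \setminus \set{u, v}$: when $w \notin N_B(u) \cup N_B(v)$ this follows from $\card{L(w)} \geq \card{A} + d_B(w) - 1 \geq d_B(w) + 1$ together with $d_{B-\set{u,v}}(w) = d_B(w)$, and when $w \in N_B(u) \cup N_B(v)$ the list loses exactly one color while the degree drops by at least one. With this slack, a greedy coloring in any vertex ordering produces an $L'$-coloring of $B - \set{u,v}$, and combined with $c$ on $\set{u,v}$ this gives an $L$-coloring of $B$ employing at most $\card{B} - 1$ distinct colors.

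There is no real obstacle here; the proof amounts to careful accounting. The central identity $\sum_{v \in I} d_B(v) = \card{E_B(I)}$ requires that $I$ be independent in $B$, and the hypothesis $\card{A} \geq 2$ is used exactly to produce the one-color slack $\card{L(w)} \geq d_B(w) + 1$ that drives the greedy step. Once these two observations are in hand, pigeonhole and greedy coloring finish everything.
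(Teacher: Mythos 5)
Your proof is correct and follows essentially the same route as the paper's: sum the list sizes over $I$ using $\card{L(v)} = \card{A} + d_B(v) - 1$ and the independence of $I$ to get $\sum_{v\in I}\card{L(v)} > \card{Pot(L)}$, extract a common color by pigeonhole, assign it to two vertices, and finish greedily since $\card{A}\ge 2$ leaves every remaining vertex of $B$ with a list exceeding its degree in $B - \set{u,v}$. The paper phrases the last step as ``at worst a $d_{-1}$-assignment on the rest of $B$,'' which is exactly the slack inequality you verify.
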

\begin{proof}
Suppose that $B$ has an independent set $I$ such that $(\card{A} - 1)\card{I} +
\sum_{v\in I}d_B(v) > \card{Pot(L)}$.  Now

$$
\sum_{v\in I}|L(v)| = \sum_{v\in I}(d(v)-1) 
= (|A|-1)|I| + \sum_{v\in I}d_B(v) = (|A|-1)|I| + \sum_{v\in I}d_B(v) > |Pot(L)|.
$$

Hence we have distinct $x, y \in I$ with a common color $c$ in their lists.  So
we color $x$ and $y$ with $c$.  Since $\card{A} \geq 2$, this leaves at worst a $d_{-1}$-assignment on the rest of $B$.  Completing the coloring to the rest of $B$ gives the desired coloring of $B$ from $L$ using at most $\card{B} - 1$ colors.
\end{proof}

\begin{lem}\label{BasicZeta}
Let $G$ be a graph and $I$ a maximal independent set in $G$. Then $\sum_{v\in
I}d(v) \geq \card{G} - \card{I}$.  If $I$ is maximum and $\sum_{v\in I}d(v) = \card{G} - \card{I}$, then $G$ is the disjoint union of $\card{I}$ complete graphs.
\end{lem}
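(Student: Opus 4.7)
The plan is to handle the two parts separately, using maximality for the inequality and the equality case to extract structure.

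For the first inequality, I would invoke the maximality of $I$: every $v \in V(G) \setminus I$ must have at least one neighbor in $I$, for otherwise $I \cup \{v\}$ would be a strictly larger independent set. Since $I$ is independent, each edge counted by $E(I)$ has exactly one endpoint in $I$, and so
\[
|E(I)| \;=\; \sum_{v \notin I} |N(v) \cap I| \;\ge\; |V(G) \setminus I| \;=\; |G| - |I|.
\]

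For the equality case, the count above forces each $v \in V(G) \setminus I$ to have \emph{exactly} one neighbor in $I$, which I would denote $\phi(v)$. This yields a partition $V(G) = \bigsqcup_{u \in I} C_u$ where $C_u := \{u\} \cup \phi^{-1}(u)$. My two remaining goals are: (a) each $C_u$ induces a clique, and (b) there are no edges between distinct $C_u$ and $C_{u'}$. Together these make $G$ literally the disjoint union of the $|I|$ cliques $\{C_u : u \in I\}$.

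For (a) I would use a swapping argument: if $v, w \in C_u \setminus \{u\}$ were nonadjacent, then $(I \setminus \{u\}) \cup \{v, w\}$ would still be independent (the only $I$-neighbor of each of $v$ and $w$ was $u$, which has been removed) and would have size $|I| + 1$, contradicting the assumption that $I$ is a \emph{maximum} independent set. This is where the strengthening from ``maximal'' to ``maximum'' is first used.

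The main obstacle will be (b). Since each $u \in I$ has all its neighbors in $C_u \setminus \{u\}$ (by the uniqueness of $\phi$), any hypothetical cross edge lies between $C_u \setminus \{u\}$ and $C_{u'} \setminus \{u'\}$ for some $u \neq u'$. To rule these out I plan a further swap: given such a cross edge $vw$ with $v \in C_u$ and $w \in C_{u'}$, examine the alternative maximum independent sets obtained by exchanging $u$ for $v$ and $u'$ for $w$, and combine the equality hypothesis $|E(I)| = |G| - |I|$ on the original $I$ with the fact that the swapped sets are also maximum independent sets to force a contradiction. Once (b) is established, the conclusion follows immediately from the partition into the cliques $C_u$.
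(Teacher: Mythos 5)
Your first part and your step (a) are correct and match the paper: maximality gives each $v \notin I$ at least one neighbor in $I$; equality forces exactly one, so the neighborhoods $N(u)$ for $u \in I$ partition $V(G)\setminus I$; and a nonadjacent pair in some $C_u \setminus \{u\}$ could replace $u$ in a maximum $I$, so each $C_u$ is a clique. The genuine gap is exactly where you flag it: step (b) is only a plan, and the plan cannot be completed, because the second assertion of the lemma is false as stated. Take $G = P_4$ with vertices $x, v, w, y$ in path order and $I = \{x, y\}$. Then $I$ is a maximum independent set, each of $v,w$ has exactly one neighbor in $I$, so $|E(I)| = 2 = |G| - |I|$, yet $G$ is connected and not complete. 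In this example your proposed swaps produce the maximum independent sets $\{v, y\}$ and $\{x, w\}$ and no contradiction arises: the equality hypothesis constrains only the original $I$, and after swapping $u$ for $v$ the other endpoint of the cross edge merely acquires a second neighbor in the new independent set, which violates nothing.

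For what it is worth, the paper's own proof has the same defect: after establishing that the sets $N(u)$, $u \in I$, are disjoint cliques, it concludes from ``we can swap $x$ with any of its neighbors to get another maximum independent set'' that $G$ has components $G[\{u\} \cup N(u)]$ --- precisely your unproved step (b), refuted by the same $P_4$. The statement needs repair, e.g.\ by additionally choosing $I$ to maximize $\sum_{v \in I} d(v)$ over all maximum independent sets, or by weakening the conclusion to ``the sets $\{u\} \cup N(u)$ are cliques partitioning $V(G)$.'' The later applications survive such a repair: in Lemma \ref{ConnectedEqual3} one is free to choose the independent set, and in Lemma \ref{ConnectedEqual3Poss} the case $\gamma=1$ follows from a direct count, since the middle vertex of the induced $P_3$ contributes two edges to $E(J)$ while every other vertex outside $J$ contributes at least one. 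But as a proof of the lemma as written, your step (b) cannot be filled in.
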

\begin{proof}
Each vertex in $G-I$ is adjacent to at least one vertex in $I$.  Hence
$\sum_{v\in I}d(v) \geq \card{G} - \card{I}$.

Now assume $I$ is maximum and $\sum_{v\in I}d(v) = \card{G} - \card{I}$.  Then $N(x)
\cap N(y) = \emptyset$ for every distinct pair $x,y \in I$.  Also, $N(x)$ must
be a clique for each $x \in I$, since otherwise we could swap $x$ out for a pair of nonadjacent neighbors and get a larger independent set.  Since we can swap $x$ with any of its neighbors to get another maximum independent set, we see that $G$ has components $\setbs{G[\set{v} \cup N(v)]}{v \in I}$.
\end{proof}

Now we combine Lemmas~\ref{ConnectedPot}--\ref{BasicZeta}.

\begin{lem}\label{ConnectedEqual3}
Let $A$ be a connected graph with $\card{A} = 3$ and $B$ a graph that is not the disjoint union of at most two complete subgraphs.  If $\join{A}{B}$ is not $d_1$-choosable, then $\join{A}{B}$ has a bad $d_1$-assignment $L$ such that $\card{Pot(L)} \leq \card{B} + 1$.
\end{lem}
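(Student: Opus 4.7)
The plan is to orchestrate Lemmas~\ref{ConnectedPot}, \ref{MaxindependentCondition}, and \ref{BasicZeta}. I start by taking a bad $d_1$-assignment $L$ on $\join{A}{B}$ with $\card{Pot(L)}$ minimum; by the Small Pot Lemma, $\card{Pot(L)} \leq \card{A} + \card{B} - 1 = \card{B} + 2$. My goal is to drop this bound by one, to $\card{A} + \card{B} - 2 = \card{B} + 1$, by invoking Lemma~\ref{ConnectedPot}. Since $A$ is connected, the only hypothesis I still need to verify is that $B$ is colorable from $L$ using at most $\card{B} - 1$ colors.

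To verify that, I use Lemma~\ref{MaxindependentCondition}: with $\card{A} = 3$, it suffices to produce an independent set $I$ in $B$ with $2\card{I} + \card{E_B(I)} > \card{Pot(L)}$, and since $\card{Pot(L)} \leq \card{B} + 2$, it is enough to have $2\card{I} + \card{E_B(I)} \geq \card{B} + 3$. I choose $I$ to be a \emph{maximum} independent set in $B$, which is exactly the setting in which the equality clause of Lemma~\ref{BasicZeta} is available.

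Now I split on $\card{I}$. Since $B$ is not a single complete graph (that would make it a disjoint union of one complete subgraph), $\card{I} \geq 2$. If $\card{I} \geq 3$, then Lemma~\ref{BasicZeta} gives $\card{E_B(I)} \geq \card{B} - \card{I}$, so $2\card{I} + \card{E_B(I)} \geq \card{B} + \card{I} \geq \card{B} + 3$. If $\card{I} = 2$, then the equality $\card{E_B(I)} = \card{B} - 2$ would, by Lemma~\ref{BasicZeta}, force $B$ to be the disjoint union of two complete graphs, contradicting the hypothesis on $B$. Hence $\card{E_B(I)} \geq \card{B} - 1$, so $2\card{I} + \card{E_B(I)} \geq 4 + \card{B} - 1 = \card{B} + 3$. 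In either case the hypothesis of Lemma~\ref{MaxindependentCondition} holds, $B$ is colorable from $L$ using at most $\card{B} - 1$ colors, and Lemma~\ref{ConnectedPot} then yields $\card{Pot(L)} \leq \card{B} + 1$, as required.

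There is no real obstacle; the proof is a clean assembly of the three preceding lemmas. The only place the structural hypothesis on $B$ is used is the case $\card{I} = 2$, where it is needed precisely to rule out the equality case of Lemma~\ref{BasicZeta}; the hypothesis that $A$ is connected is used precisely to invoke Lemma~\ref{ConnectedPot}.
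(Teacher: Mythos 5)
Your proof is correct and follows essentially the same route as the paper's: take a pot-minimal bad $d_1$-assignment, apply the Small Pot Lemma, use Lemma~\ref{BasicZeta} on a maximum independent set to verify the hypothesis of Lemma~\ref{MaxindependentCondition}, and conclude via Lemma~\ref{ConnectedPot}. The only difference is cosmetic: you split cases on $\card{I}=2$ versus $\card{I}\ge 3$, while the paper splits on whether the inequality in Lemma~\ref{BasicZeta} is strict, but the two case analyses are interchangeable.
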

\begin{proof}
Suppose $\join{A}{B}$ is not $d_1$-choosable and let $L$ be a bad $d_1$-assignment on $\join{A}{B}$ with $|Pot(L)|$ minimum.  Then, by the Small Pot Lemma, $\card{Pot(L)} \leq \card{B} + 2$.  

Let $I$ be a maximum independent set in $B$. Since $B$ is not the disjoint union of
at most two complete subgraphs, Lemma~\ref{BasicZeta} implies that either
$\sum_{v\in I}d(v) > \card{B} - \card{I}$ or $\card{I} \geq 3$.  In the first
case, $2\card{I} + \sum_{v\in I}d(v) > 2\card{I} + \card{B} - \card{I} \geq 2 + \card{B} \geq \card{Pot(L)}$.  In the second case, 
$2\card{I} + \sum_{v\in I}d(v) \geq 2\card{I} + \card{B} - \card{I} \geq 3 + \card{B} > \card{Pot(L)}$.

Thus by Lemma \ref{MaxindependentCondition}, $B$ can be colored from $L$ using at most $\card{B} - 1$ colors.  But then we are done by Lemma \ref{ConnectedPot}.
\end{proof}

\subsection{Joins with $K_3$}
\label{JoinsWithThree}
In this section we investigate the $d_1$-choosable graphs of the form
$\join{K_3}{B}$, where $\card{B} \geq 2$.

\begin{lem}\label{AJoinP_4}
$\join{K_3}{P_4}$ is $d_1$-choosable.
\end{lem}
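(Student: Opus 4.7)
The plan is to argue by contradiction: suppose $\join{K_3}{P_4}$ has a bad $d_1$-assignment $L$, and choose $L$ with $\card{Pot(L)}$ minimum. Label the $K_3$ side $a_1,a_2,a_3$ (so $\card{L(a_i)}=5$) and the $P_4$ side $b_1b_2b_3b_4$, so that $\card{L(b_1)}=\card{L(b_4)}=3$ and $\card{L(b_2)}=\card{L(b_3)}=4$. Since $P_4$ is connected but not complete and has more than one vertex, it is not the disjoint union of at most two complete subgraphs, so Lemma~\ref{ConnectedEqual3} applies to give the key pot bound $\card{Pot(L)}\le \card{P_4}+1=5$.

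Next I would exploit this pot bound to force a specific $2$-coloring of $P_4$ from $L$. The two nonadjacent pairs of $P_4$ that together cover all four vertices are $\{b_1,b_3\}$ and $\{b_2,b_4\}$. By inclusion-exclusion in a pot of size at most $5$,
\[
\card{L(b_1)\cap L(b_3)}\ge 3+4-5=2 \quad\text{and}\quad \card{L(b_2)\cap L(b_4)}\ge 4+3-5=2.
\]
Because each intersection has at least two elements, I can choose $c_1\in L(b_1)\cap L(b_3)$ and $c_2\in L(b_2)\cap L(b_4)$ with $c_1\ne c_2$. Using $c_1$ on $b_1,b_3$ and $c_2$ on $b_2,b_4$ is a proper coloring of $P_4$ that uses only two colors in total.

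Finally I would extend this partial coloring to the $K_3$-side greedily. After deleting the two used colors from each $L(a_i)$, the residual lists still have size at least $5-2=3$; the standard greedy argument on a clique of size $3$ with lists of size $\ge 3$ produces a proper coloring of $K_3$ avoiding $\{c_1,c_2\}$. Combined with the coloring of $P_4$, this yields an $L$-coloring of $\join{K_3}{P_4}$, contradicting the assumption that $L$ was bad.

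The only nontrivial step is the invocation of Lemma~\ref{ConnectedEqual3} to tighten the Small Pot bound from $\card{G}-1=6$ down to $5$; with $\card{Pot(L)}\le 6$ the argument for choosing $c_1\ne c_2$ from the two intersections would not go through directly, so the sharpened pot bound is exactly what makes the pigeonhole on nonadjacent pairs succeed.
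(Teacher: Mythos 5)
Your proposal is correct and follows essentially the same route as the paper's proof: invoke Lemma~\ref{ConnectedEqual3} to get $\card{Pot(L)}\le 5$, use the pigeonhole/inclusion-exclusion bound on the two nonadjacent pairs of the $P_4$ to find distinct common colors $c_1\ne c_2$, two-color the $P_4$, and finish greedily on the $K_3$. No gaps.
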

\begin{proof}
Suppose otherwise, and let $G$ be $\join{K_3}{P_4}$ and let $L$ be a bad
list assignment with $|Pot(L)|$ minimum.  Now Lemma~\ref{ConnectedEqual3} implies that $|Pot(L)|\le |P_4|+1=5$.
Denote the vertices of the $P_4$ as $y_1, y_2, y_3, y_4$ in order.  Note that
$|L(y_1)|+|L(y_3)|=3+4> \card{Pot(L)}+1$ and $|L(y_2)|+|L(y_4)|=4+3>
\card{Pot(L)}+1$.  
Hence we have colors $c_1 \neq c_2$ with $c_1 \in L(y_1) \cap L(y_3)$ and $c_2
\in L(y_2) \cap L(y_4)$.  We color the vertices in each pair with their
respective colors and finishing greedily on the $K_3$.
\end{proof}

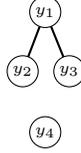
\begin{figure}[htb]
\centering
\begin{tikzpicture}[scale = 10]
\tikzstyle{VertexStyle}=[shape = circle,	
								 minimum size = 1pt,
								 inner sep = 1.2pt,
                         draw]
\Vertex[x = 0.481999963521957, y = 0.880285702645779, L = \tiny {$y_1$}]{v0}
\Vertex[x = 0.451999992132187, y = 0.801428586244583, L = \tiny {$y_2$}]{v1}
\Vertex[x = 0.513999998569489, y = 0.801428586244583, L = \tiny {$y_3$}]{v2}
\Vertex[x = 0.482000023126602, y = 0.720285713672638, L = \tiny {$y_4$}]{v3}
\Edge[](v1)(v0)
\Edge[](v2)(v0)
\end{tikzpicture}
\caption{The antipaw.}
\label{fig:antipaw}
\end{figure}

\begin{lem}\label{AntiPaw}
$\join{K_3}{\text{antipaw}}$ is $d_1$-choosable.
\end{lem}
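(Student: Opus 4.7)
I plan to argue by contradiction using Lemma~\ref{ConnectedEqual3}. The antipaw is $B=P_3+K_1$, which is not a disjoint union of at most two complete subgraphs since its $P_3$ component is not a clique. So if $\join{K_3}{B}$ is not $d_1$-choosable, I may choose a bad $d_1$-assignment $L$ with $|Pot(L)|\le|B|+1=5$. On the other hand, each vertex of the $K_3$ has degree $6$ in the join and hence list size $5$, which forces $|Pot(L)|=5$ and the list of each such vertex to equal the whole pot.

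Label the antipaw so that $y_1$ is the center of the $P_3$, $y_2$ and $y_3$ are its leaves, and $y_4$ is the isolated vertex; the list sizes on $y_1,y_2,y_3,y_4$ are $4,3,3,2$. My strategy is a two-step pigeonhole using the two non-edges $y_2y_3$ and $y_1y_4$ inside the antipaw. First, since $|L(y_2)|+|L(y_3)|=6>5=|Pot(L)|$, the lists $L(y_2)$ and $L(y_3)$ share a color $c$, and I will color both vertices with $c$. After this step each $K_3$-vertex has list of size $4$, $y_1$ (being adjacent to both $y_2$ and $y_3$) has list of size at least $3$, and $y_4$ (adjacent to neither) keeps its list of size $2$, while the residual pot has at most $4$ colors.

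Second, $y_1$ and $y_4$ are nonadjacent with residual lists satisfying $|L'(y_1)|+|L'(y_4)|\ge 3+2=5>4$, so they share a color $c'$; coloring both of them with $c'$ leaves a $K_3$ with each list of size at least $3$, which is trivially finished greedily. This contradicts the badness of $L$. I do not anticipate any serious obstacle: the crux is simply that Lemma~\ref{ConnectedEqual3} cuts the pot down from the Small Pot bound of $6$ to $5$, after which two pigeonhole steps on the independent pairs $\{y_2,y_3\}$ and $\{y_1,y_4\}$ close the argument.
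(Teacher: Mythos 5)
Your setup is correct: the antipaw is $P_3+K_1$, which is not a disjoint union of at most two complete graphs, so Lemma~\ref{ConnectedEqual3} does give $\card{Pot(L)}\le 5$, and your list sizes $4,3,3,2$ on $y_1,y_2,y_3,y_4$ and the first pigeonhole on $\{y_2,y_3\}$ are all fine. But there is a genuine gap in the second step. After you color $y_2$ and $y_3$ with $c$, the vertex $y_4$ is adjacent to neither of them, so its list is unchanged and may still contain $c$. Hence the residual lists $L'(y_1)$ and $L'(y_4)$ are only known to lie inside $Pot(L)$, which has $5$ colors, not inside $Pot(L)\setminus\set{c}$ with $4$ colors; the count $\card{L'(y_1)}+\card{L'(y_4)}\ge 3+2=5$ then proves nothing. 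Concretely, take $Pot(L)=\set{1,\dots,5}$, $L(y_2)=\set{1,2,3}$, $L(y_3)=\set{1,4,5}$ (so $c=1$ is forced), $L(y_1)=\set{1,2,3,4}$, $L(y_4)=\set{1,5}$: then $L'(y_1)=\set{2,3,4}$ and $L'(y_4)=\set{1,5}$ are disjoint and your second pigeonhole stalls.

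The repair is short and is exactly the move the paper makes. The troublesome case is precisely $c\in L(y_4)$; but then $\set{y_2,y_3,y_4}$ is an independent set of the antipaw sharing the color $c$, and coloring all three with $c$ leaves $y_1$ with at least $3$ available colors and each $K_3$-vertex with at least $3$, so the coloring finishes greedily. If instead $c\notin L(y_4)$, both residual lists avoid $c$ and your count $3+2>4$ is valid. (The paper's proof runs the two pigeonholes on $\set{y_2,y_3}$ and $\set{y_1,y_4}$ in parallel and falls back to this independent triple when the two forced common colors coincide; your sequential version needs the same fallback.)
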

\begin{proof}
Suppose not. We use the labeling of the antipaw given in
Figure \ref{fig:antipaw}. Since the antipaw is not a disjoint union of at most
two complete graphs, Lemma \ref{ConnectedEqual3} gives us a bad
$d_1$-assignment $L$ on $\join{K_3}{\text{antipaw}}$ with $\card{Pot(L)} \leq
5$.  Note that $\card{L(y_1)} + \card{L(y_4)} \geq 6$ and $\card{L(y_2)} +
\card{L(y_3)} \geq 6$.  We must have $\card{L(y_1)
\cap L(y_4)} = 1$ and $L(y_1) \cap L(y_4) = L(y_2) \cap L(y_3)$, for otherwise we color each pair with a different color and finish greedily.  But then we
have $c \in L(y_2) \cap L(y_3) \cap L(y_4)$ and after coloring $y_2, y_3, y_4$
with $c$ we can complete the coloring, getting a contradiction.
\end{proof}

\begin{lem}\label{ConnectedEqual3Poss}
\label{K3Classification}
$\join{K_3}{B}$ is $d_1$-choosable unless
\begin{enumerate}
\item[(a)]
$B$ is almost complete, 
\item[(b)]
$B\in\{\djunion{K_t}{K_{\card{B} - t}}, \djunion{\djunion{K_1}{K_t}}{K_{\card{B} - t - 1}}, \djunion{E_3}{K_{\card{B}
- 3}}\}$, or 
\item[(c)]
$\card{B} \leq 5$ and $B = \join{E_3}{K_{\card{B} - 3}}$.
\end{enumerate}
\end{lem}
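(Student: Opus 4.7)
The plan is to show the contrapositive: if $G = \join{K_3}{B}$ is not $d_1$-choosable, then $B$ is one of the five listed exceptions. If $B$ is a disjoint union of at most two complete graphs, then $B$ is immediately almost complete or of the form $\djunion{K_t}{K_{|B|-t}}$, and we are done. Otherwise, Lemma~\ref{ConnectedEqual3} supplies a bad $d_1$-assignment $L$ on $G$ with $|Pot(L)| \le |B|+1$. Since each vertex of the top $K_3$ has degree $|B|+2$, its list has size exactly $|B|+1$, so $|Pot(L)| = |B|+1$ and every vertex of the $K_3$ has list equal to $Pot(L)$; vertices of $B$ satisfy $|L(v)| = d_B(v)+2$.

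The central observation is that any $L$-coloring of $B$ using at most $|B|-2$ distinct colors extends to $G$, because each vertex of the $K_3$ starts with $|Pot(L)| = |B|+1$ colors and loses at most $|B|-2$ of them from its neighborhood in $B$, leaving at least three colors available to properly color the $K_3$. Thus it suffices to show that $B$ admits such an economical coloring from $L$, or else $B$ is one of exceptions (c), (d), (e). Such a coloring exists whenever $B$ contains either an independent triple whose three lists share a common color, or two vertex-disjoint non-adjacent pairs, each pair having a color common to both of its lists.

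The case analysis splits according to whether $B$ is connected. For disconnected $B$ with components $C_1, \ldots, C_k$ ($k \ge 2$), iterate on the component structure: if all components are cliques and $k \ge 3$, a pigeonhole argument on a transversal independent set of size $3$, in the style of Lemma~\ref{MaxindependentCondition}, produces a shared color unless the component sizes are forced into the shape of exception (c) or (d); if some component is not a clique, a non-edge inside that component pairs with a vertex from another component to yield a second independent pair, again giving a shared color via pigeonhole. For connected $B$ that is not almost complete, $B$ has at least two non-edges, and a direct inspection reveals an induced $P_4$ or an induced antipaw in $B$ unless $|B| \le 5$ and $B = \join{E_3}{K_{|B|-3}}$. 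In the former sub-case, the plan is to color $V(B) \setminus V(P_4)$ (respectively, $V(B) \setminus V(\text{antipaw})$) from $L$ so that the residual list assignment on the induced $\join{K_3}{P_4}$ (respectively, $\join{K_3}{\text{antipaw}}$) remains a $d_1$-assignment, and then invoke Lemma~\ref{AJoinP_4} (respectively, Lemma~\ref{AntiPaw}) to complete the coloring, a contradiction.

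The main obstacle is the disconnected case when the components are nearly all cliques with sizes that straddle the exception boundaries: pinpointing exactly when the pigeonhole fails is delicate and requires tracking how a few small components (especially singleton $K_1$'s) suppress the available color savings. A secondary challenge is ensuring that the residual list assignment after coloring $V(B)\setminus V(P_4)$ or $V(B)\setminus V(\text{antipaw})$ is indeed a $d_1$-assignment on the relevant join, which requires carefully choosing colors so that no endpoint of the $P_4$ (or antipaw) receives a forbidden color from its external neighbors in $B$.
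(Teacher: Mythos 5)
Your framework is the same as the paper's --- invoke Lemma~\ref{ConnectedEqual3} to force $\card{Pot(L)}\le\card{B}+1$, note that any $L$-coloring of $B$ with at most $\card{B}-2$ colors leaves each $K_3$-vertex a list of size at least $3$ and hence extends, and then hunt for a monochromatic independent triple or two disjoint nonadjacent pairs each sharing a color --- but both branches of your case analysis have real gaps. In the disconnected branch, the direct pigeonhole on a size-$3$ transversal cannot work: for a transversal $\set{v_1,v_2,v_3}$ of three clique components of sizes $a_1,a_2,a_3$ you have $\sum_i\card{L(v_i)}=\sum_i(a_i+1)\le\card{B}+3$, whereas a guaranteed $3$-wise common color requires this sum to exceed $2\card{Pot(L)}$, which may be $2\card{B}+2$; and a guaranteed pairwise common color across components requires $a_i+a_j+2>\card{B}+1$, which fails whenever there are at least three components none of which nearly exhausts $B$. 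So for $B=\djunion{K_2}{\djunion{K_2}{K_2}}$ --- which is not an exception, hence must be shown $d_1$-choosable --- your pigeonhole produces nothing, and it is not a matter of ``delicate'' bookkeeping of singleton components. The paper escapes by a two-stage argument: it first extracts one nonadjacent pair $x,y$ with a common color $c$ (this much pigeonhole does give), colors it, shows the residual pot on $D=B-\set{x,y}$ drops to at most $\card{B}$ while every nonadjacent pair of $D$ must now have disjoint lists, and deduces via Lemma~\ref{BasicZeta} that $D$ is a disjoint union of at most two cliques; the exceptional shapes then emerge from an analysis of the adjacencies of $x$ and $y$ into $D$ that leans on the pot-minimality built into Lemma~\ref{ConnectedPot} and on Lemmas~\ref{AJoinP_4} and~\ref{AntiPaw}. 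You need some version of this second stage.

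In the connected branch, your structural claim is false. Since the antipaw is $\djunion{P_3}{K_1}$, a connected $B$ with no induced $P_4$ and no induced antipaw is a connected cograph in which every induced $P_3$ dominates, and this class is much larger than $\set{\join{E_3}{K_{\card{B}-3}}:\card{B}\le 5}$: it contains $C_4$, $K_{2,3}$, $\join{E_2}{P_3}$, $\join{K_1}{C_4}$, and more generally any join of two incomplete graphs. None of these is almost complete, and all must be shown $d_1$-choosable when joined with $K_3$ (they reduce to Lemma~\ref{ConnectedAtLeast4Poss} or Lemma~\ref{ConnectedIncompleteAtLeast4}), but your argument never reaches them. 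Where an induced $P_4$ or antipaw does exist, your plan of precoloring $B$ minus those four vertices and then invoking Lemma~\ref{AJoinP_4} or~\ref{AntiPaw} is sound, since the degree counts make the residual assignment on the join at worst a $d_1$-assignment automatically; but as written the connected case is incomplete, and the disconnected case is not a proof.
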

\begin{proof}
Let $\join{K_3}{B}$ be a graph that is not $d_1$-choosable and let $B$ be none
of the specified graphs.  Lemma \ref{ConnectedEqual3} gives us a bad
$d_1$-assignment $L$ on $\join{K_3}{B}$ with $\card{Pot(L)} \leq \card{B} + 1$.
Furthermore, the proof of Lemma~\ref{ConnectedEqual3} shows that we can color
$B$ with at most $|B|-1$ colors.  
In particular we have nonadjacent $x, y \in V(B)$ and $c \in L(x) \cap L(y)$.  
Coloring $x$ and $y$ with $c$, and removing $c$ from the lists of their
neighbors, leaves a list assignment $L'$ on $D \DefinedAs B
- \set{x, y}$.  
If $c \in L'(z)$ for some $z \in V(D)$, then $\set{x, y, z}$ is independent and
we can color $z$ with $c$ and complete the coloring to get a contradiction.  
Hence $Pot(L') \subseteq Pot(L) - \set{c}$, and thus $\card{Pot(L')}\le |B|$.  
If we can find any nonadjacent pair in $D$ to receive a common color, then
we can finish the coloring greedily.  Now the fact that all nonadjacent pairs
in $D$ must have disjoint lists greatly restricts the possibilities for $D$.

Suppose, for a contradiction, that $D$ is not the disjoint union of at most two
complete subgraphs.  If $\alpha(D) \geq 3$, let $J$ be a maximum independent set in
$D$ and set $\gamma \DefinedAs 0$.  Otherwise $D$ contains an induced $P_3$
$abc$ and we let $J=\set{a, c}$ and set $\gamma \DefinedAs 1$.  (In this case $J$
is a maximum independent set, since $\alpha(D)= 2$.)

Lemma~\ref{BasicZeta} implies that
$\sum_{v\in J}d_D(v)\ge |D|-|J|+\gamma$.  Since $L$ is bad, we must have 
$\card{B}\ge \card{Pot(L')} \ge \sum_{v \in J} \card{L'(v)} 
\ge 2\card{J} + \sum_{v \in J} d_D(v) 
\ge 2\card{J} + \card{D} - \card{J} + \gamma 
\ge \card{J} + \card{D} + \gamma 
\ge \card{J} + \card{B} - 2 + \gamma$.

Taking the first and last expressions in the chain of inequalities gives
$\card{J} \leq 2 - \gamma$, a contradiction.  
Therefore $D$ is indeed the disjoint union of at most two complete subgraphs.  
We now consider the case that $D$ is a complete graph and the case that $D$ is
the disjoint union of two complete graphs.

First, suppose $D$ is a complete graph.  Now $\card{D} \geq 2$, since $B$ is not
almost complete. Put $X \DefinedAs N(x) \cap V(D)$ and $Y \DefinedAs N(y) \cap
V(D)$. Suppose $X - Y \neq \emptyset$ and pick $z \in X - Y$.  We have
$\card{L(y)} + \card{L(z)} \ge
d(y) + d(z) - 2 = d_B(y) + d_B(z) + 4 \geq 0 + \card{B} - 2 + 4 = \card{B} +
2 > \card{Pot(L)}$. By repeating the argument given above for $B-\set{x,y}$,
we see that $B - \set{y, z}$ is also the disjoint union of at most two
complete subgraphs.  In particular, $x$ is adjacent to all or none of $D - z$. 
If all, then $B$ is almost complete, if none then $B$ contains an induced $P_4$
or antipaw, and both possibilities give contradictions by Lemmas \ref{AJoinP_4}
and \ref{AntiPaw}.  Hence $X - Y = \emptyset$.  

By exchanging $x$ and $y$ in the argument above, $Y - X = \emptyset$,
so $X = Y$.  Since $B$ is not $\djunion{E_2}{K_{\card{B} - 2}}$, $\card{X} >
0$.  If $X = V(D)$, then $B$ is almost complete.  If $|V(D)-X|\ge 2$, then pick
$w_1, w_2\in V(D)-X$.  Now by considering degrees, we see that $L(x)\cap
L(w_1)$ and $L(y)\cap L(w_2)$ are both nonempty.  Now we can color $x, y, w_1,
w_2$ using only 2 colors, and then complete the coloring.  Hence, we must have
$|V(D)-X|=1$, so let $\{w\}=V(D)-X$.  Now $x$ and $y$ are joined to $D - w$ and
hence $B$ is $\join{E_3}{K_{\card{B} - 3}}$, a contradiction.

Thus $D$ must instead be the disjoint union of two complete subgraphs $D_1$ and
$D_2$.  For each $i \in \{1,2\}$, put $X_i \DefinedAs N(x) \cap V(D_i)$ and
$Y_i \DefinedAs N(y) \cap V(D_i)$.  

We know that $X_i \cap Y_i = \emptyset$, since otherwise we get a contradiction
as above when $\gamma = 1$.  
Suppose we have $z_1 \in V(D_1)$ and
$z_2 \in V(D_2)$ such that $L(z_1) \cap L(z_2) \neq \emptyset$. We must have $L(z_1) \cap L(z_2) = L(x) \cap L(y)$, for otherwise
we color each pair with a different color and finish greedily. Since no independent
set of size three can have a color in common, the edges $z_1x$ and $z_2y$ or
$z_1y$ and $z_2x$ must be present.
Using the same argument as for $B-\set{x,y}$, we see that $B - \set{z_1, z_2}$
is the disjoint union of at most two complete subgraphs.  
So each of $x$ and $y$ is adjacent to all or none of each of $V(D_1-z_1)$ and
$V(D_2-z_2)$.
Thus, by symmetry, we may assume that $V(D_1 - z_1) \subseteq X_1$
and $V(D_2 - z_2) \subseteq Y_2$.  

If $\card{D_1} = \card{D_2} = 1$, then $B$
is the disjoint union of two cliques, a contradiction.  So, by symmetry, we may
assume that $\card{D_1} \geq 2$.  Pick $w \in V(D_1 - z_1)$. If $x$ is not
adjacent to $z_1$, then $xwz_1$ is an induced $P_3$ in $B$.  Since $X_1 \cap
Y_1 = \emptyset$, this $P_3$ together with $y$ either induces a $P_4$ or an
antipaw, contradicting Lemmas \ref{AJoinP_4} and \ref{AntiPaw}.  Hence $X_1 =
V(D_1)$.  Similarly, if $\card{D_2} \geq 2$, then $Y_2 = V(D_2)$ and $B$ is the
disjoint union of two complete subgraphs, a contradiction.  Hence $D_2 =
\set{z_2}$.  But $z_2$ must be adjacent to $y$, so $B$ is again the disjoint
union of two cliques, a contradiction.

Thus for every $z_1 \in V(D_1)$ and $z_2 \in V(D_2)$ we have $L(z_1) \cap L(z_2) = \emptyset$. 
Suppose there exist $z_1\in V(D_1)$ and $z_2\in V(D_2)$ such that $z_1$ and
$z_2$ are each adjacent to at least one of $x$ and $y$.
Then $\card{L(z_1)} + \card{L(z_2)} \geq d(z_1) + d(z_2) - 2 \geq d_B(z_1) + d_B(z_2) + 4 \geq \card{B} - 4 + 2 + 4 = \card{B} + 2 > \card{Pot(L)}$.  Hence $L(z_1) \cap L(z_2) \neq \emptyset$, a contradiction.

Thus, by symmetry, we may assume that there are no edges between $D_1$ and
$\set{x, y}$.  Since no vertex in $D_2$ is adjacent to both $x$ and $y$, only
one of $x$ or $y$ can have neighbors in $D_2$ lest $B$ contain an induced $P_4$
contradicting Lemma \ref{AJoinP_4}.  Without loss of generality, we may assume
that $y$ has no neighbors in $D_2$. 

Suppose that $|D_1|\ge 2$, $|D_2|\ge 2$, and there exists $t\in D_2$ such that
$x$ and $t$ are nonadjacent.  Now choose $u,v\in V(D_1)$ and $w\in
V(D_2)\setminus\{t\}$.  
Now $\set{v, w, y}$ is independent and $\card{L(v)} + \card{L(w)} + \card{L(y}) \geq d(v) + d(w) + d(y) - 3 \geq d_B(v) + d_B(w) + d_B(y) + 6 \geq \card{B} + 2 > \card{Pot(L)}$. Hence either $L(v) \cap L(y) \neq \emptyset$ or $L(w) \cap L(y) \neq \emptyset$.  
Similarly, either $L(u)\cap L(x) \neq \emptyset$ or $L(t)\cap L(x)\neq
\emptyset$.  Thus, we can color 4 vertices using only 2 colors, and we can
complete the coloring.
So now either $|D_1|=1$, $|D_2|=1$, or $D_2\subset N(x)$.

If $|D_2|=1$, then either $B=K_1+K_2+K_{|B|-3}$ or else $B=E_3+K_{|B|-3}$, both
of which are forbidden.  Similarly, if $|D_1|=1$ and $x$ is adjacent to all or
none of $D_2$, then $B=K_1+K_1+K_{|B|-2}$ or $E_3+K_{|B|-3}$.
Finally, if $x$ is adjacent to some, but not all of $D_2$, then $B$ contains an
antipaw.  By Lemma~\ref{AntiPaw}, this is a contradiction.
\end{proof}

\subsection{Joins with $K_2$ and $E_2$}
\label{JoinsWithTwos}
\begin{lem}\label{ConnectedIncompleteAtLeast4}
$\join{A}{E_2}$ is $d_1$-choosable if $A$ is connected, incomplete and $|A| \ge 4$.
\end{lem}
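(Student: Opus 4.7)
Suppose for contradiction that $\join{A}{E_2}$ is not $d_1$-choosable; let $L$ be a bad $d_1$-assignment on $\join{A}{E_2}$ with $\card{Pot(L)}$ minimum, and write $V(E_2)=\set{x,y}$. The Small Pot Lemma gives $\card{Pot(L)}\leq \card{A}+1$, and since $\card{L(x)}+\card{L(y)}=2(\card{A}-1)\geq \card{A}+2$ when $\card{A}\geq 4$, $L(x)\cap L(y)\neq\emptyset$. Thus $E_2$ is colorable from $L$ with a single common color, so Lemma~\ref{ConnectedPot} tightens the bound to $\card{Pot(L)}\leq \card{A}$, whence $\card{L(x)\cap L(y)}\geq \card{A}-2$. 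For any $c\in L(x)\cap L(y)$, coloring $x$ and $y$ with $c$ leaves a list assignment $L'_c$ on $A$ with $\card{L'_c(v)}\geq d_A(v)$ for every $v$; if any $L'_c$ admits a coloring we are done, so assume every $L'_c$ is bad. By the Classification of $d_0$-choosable graphs, $A$ is then a Gallai tree. Moreover, a standard strengthening---if $A$ is connected with $\card{L(v)}\geq d_A(v)$ everywhere and $\card{L(u)}>d_A(u)$ for some $u$, then $A$ is $L$-colorable, by coloring greedily in an order ending at $u$---forces every $c\in L(x)\cap L(y)$ to lie in $L(v)$ for every $v\in V(A)$, so $L(x)\cap L(y)\subseteq S\DefinedAs \bigcap_{v\in V(A)}L(v)$. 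Since $A$ is connected, incomplete, and has at least four vertices, either $A=C_{2k+1}$ with $k\geq 2$, or $A$ has at least two blocks.

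In the cycle case, applying Lemma~\ref{d0NeighborList} to a bad $L'_c$ (every vertex of $C_{2k+1}$ is a noncutvertex and all vertices have equal degree and list size) forces all the lists $L'_c(v)$, and hence all $L(v)$ on $A$, to coincide with a common $3$-set $\set{a,b,c}$. Since this must hold for every $c\in L(x)\cap L(y)$, every such $c$ lies in that $3$-set, so $\card{L(x)\cap L(y)}\leq 3$; combined with $\card{L(x)\cap L(y)}\geq \card{A}-2$ this forces $\card{A}=5$, i.e., $A=C_5$. Then $\card{L(x)}=\card{L(y)}=4$ exceed $\card{Pot_A(L)}=3$, so I pick (necessarily distinct) $c_x\in L(x)\setminus Pot_A(L)$ and $c_y\in L(y)\setminus Pot_A(L)$, color $x$ and $y$ with them, and finish by $3$-list-coloring $C_5$ with its intact size-$3$ lists.

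The multi-block case is the main obstacle. Since $A$ is a Gallai tree on at least four vertices and $K_{\card{A}}$ minus a nonempty matching is never a Gallai tree (it is $2$-connected but is neither a clique nor an odd cycle), some $v_0\in V(A)$ has $d_A(v_0)\leq \card{A}-3$, giving $\card{L(v_0)}\leq \card{A}-2<\card{L(x)}$. I can then pick $c_x\in L(x)\setminus L(v_0)$ and $c_y\in L(y)\setminus L(v_0)$; these are forced to be distinct because any common color lies in $S\subseteq L(v_0)$. Coloring $x$ and $y$ with $c_x$ and $c_y$ leaves the full list $L(v_0)$ intact at $v_0$, giving $\card{L'(v_0)}>d_A(v_0)$, so the strengthening above will finish the coloring provided $\set{c_x,c_y}\not\subseteq L(v)$ for every other $v\in V(A)$. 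I expect the technical core of the argument to lie in this last verification, which should be carried out by combining the bound $\card{Pot(L)}\leq \card{A}$, the containment $L(x)\cap L(y)\subseteq S$, and refined applications of Lemma~\ref{d0NeighborList} to the bad $L'_c$ across varying $c\in L(x)\cap L(y)$, possibly iterating the choice of $v_0$ over noncutvertices of distinct end-blocks to exploit the structure of the Gallai tree.
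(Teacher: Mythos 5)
Your opening moves coincide with the paper's: the Small Pot Lemma gives $\card{Pot(L)}\le\card{A}+1$, a common color on $x,y$ plus Lemma~\ref{ConnectedPot} improves this to $\card{Pot(L)}\le\card{A}$, hence $\card{L(x)\cap L(y)}\ge\card{A}-2$, and each such common color must appear in every list on $A$. From there you diverge. The paper never invokes Gallai trees: it argues that if $\alpha(A)\ge 3$, or if $A$ contains two disjoint nonadjacent pairs, then the ($\ge\card{A}-2\ge 2$) universal common colors can be spent on an independent set, or on two pairs, inside $A$ and the coloring completed; so $A$ must be almost complete, and that single remaining case is finished by hand. Your Gallai-tree route correctly dispatches the case where $A$ is not a Gallai tree and the odd-cycle case (the $C_5$ endgame is sound). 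But the multi-block case, which carries most of the lemma (all trees, all chains of cliques, the bowtie, \dots), is not proved.

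The gap is exactly the step you flag: after coloring $x$ with $c_x\in L(x)\setminus L(v_0)$ and $y$ with $c_y\in L(y)\setminus L(v_0)$, any $v$ with $\set{c_x,c_y}\subseteq L(v)$ retains only $d_A(v)-1$ colors, below what the greedy/Gallai argument needs. This is not a verification you can hope to complete by choosing $c_x,c_y$ or $v_0$ more cleverly: a cutvertex $u$ of the Gallai tree can have $d_A(u)=\card{A}-1$, so $\card{L(u)}=\card{A}\ge\card{Pot(L)}$, forcing $L(u)=Pot(L)\supseteq\set{c_x,c_y}$ for every possible choice. Concretely, for $A$ the bowtie (two triangles sharing a vertex) the hypotheses force the endblock lists to equal the $3$-set $L(x)\cap L(y)$ and the cutvertex's list to be the whole pot, so the desired condition $\set{c_x,c_y}\not\subseteq L(v)$ provably fails at the cutvertex. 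To rescue the approach you need a second coincidence \emph{inside} $A$ --- coloring a nonadjacent pair of $A$ with one of the universal common colors while $x,y$ receive another --- which is precisely the mechanism the paper's proof is built around and which your plan omits. As written, the proposal is incomplete in its main case.
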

\begin{proof}
Suppose not and let $L$ be a bad $d_1$-assignment on $\join{A}{E_2}$ with $|Pot(L)|$ minimum. 
By the Small Pot Lemma we know $\card{Pot(L)} \le |A| + 1$. Let $x_1$ and $x_2$
be the vertices in $E_2$.  Then $\card{L(x_1)} + \card{L(x_2)} = d(x_1) +
d(x_2) - 2 = 2\card{A} - 2 > \card{Pot(L)}$ since $|A| \ge 4$.  Hence we can
color $x_1$ and $x_2$ the same and Lemma \ref{ConnectedPot} shows that
$\card{Pot(L)} \le |A|$.  Hence $x_1$ and $x_2$ have at least $2\card{A} - 2 -
|A| = |A| - 2$ colors in common.  Each such color $c$ must appear in every list
in $A$, for otherwise we color $x_1$ and $x_2$ with $c$ and then greedily color
toward the $y \in A$ with $c \not \in L(y)$.

If $\alpha(A) \geq 3$, then coloring a maximum independent set all with the same color leaves an easily completable list assignment.  Also, if $A$ contains two disjoint pairs of nonadjacent vertices, then we may color each pair with a different color (there are at least $|A| - 2 \ge 2$ colors appearing on all vertices) and again complete the coloring.  Hence $A$ is almost complete.  

Let $c_1$ and $c_2$ be different colors appearing on all vertices. Choose $z \in V(A)$ such that $A - z$ is complete.  Since $A$ is incomplete, we have $w \in V(A-z)$ nonadjacent to $z$.  Also, as $A$ is connected we have $w' \in V(A-z)$ adjacent to $z$.  Color $x_1$ and $x_2$ with $c_1$ and $w$ and $z$ with $c_2$ to get a list assignment $L'$ on $D \DefinedAs A - \set{w, z}$ where
$\card{L'(v)} \geq d_D(v)$ for all $v \in V(D)$ and $\card{L'(w')} > d_D(w')$.
Now the coloring can be completed, a contradiction.
\end{proof}

\begin{lem}\label{E2Join2P3}
$\join{E_2}{2P_3}$ is $d_1$-choosable.
\end{lem}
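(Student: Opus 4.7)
The plan is to invoke the Small Pot Lemma together with a careful case analysis of which color can be used simultaneously on the two vertices of the $E_2$.

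First, I suppose $L$ is a bad $d_1$-assignment on $G \DefinedAs \join{E_2}{2P_3}$ with $\card{Pot(L)}$ minimum. Since $\card{G}=8$, the Small Pot Lemma gives $\card{Pot(L)}\le 7$. Label the $E_2$ vertices $x_1,x_2$ and each $P_3$ as $a_1a_2a_3$ and $b_1b_2b_3$ with midpoints $a_2,b_2$. The list sizes are $\card{L(x_i)}=5$, $\card{L(a_1)}=\card{L(a_3)}=\card{L(b_1)}=\card{L(b_3)}=2$, $\card{L(a_2)}=\card{L(b_2)}=3$. Because $\card{L(x_1)}+\card{L(x_2)}=10$ and $\card{Pot(L)}\le 7$, we have $\card{L(x_1)\cap L(x_2)}\ge 3$.

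The main idea is to choose $c\in L(x_1)\cap L(x_2)$, color both $x_1$ and $x_2$ with $c$, and extend to the two $P_3$'s from the residual lists $L'(v)\DefinedAs L(v)\setminus\{c\}$, which have sizes $\ge 1$ on endpoints and $\ge 2$ on midpoints. I want to show that at most one value of $c$ fails for each $P_3$; combined with $\card{L(x_1)\cap L(x_2)}\ge 3$, this gives at least one $c$ that works on both $P_3$'s, completing the coloring and yielding the contradiction.

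To carry this out, I analyze when a single $P_3$, say $a_1a_2a_3$, admits no $L'$-coloring. A greedy argument shows that the only obstruction is $\card{L'(a_1)}=\card{L'(a_3)}=1$ and $L'(a_2)=L'(a_1)\cup L'(a_3)$ with these two singletons distinct; if either endpoint still has two colors or if the midpoint has three, we can always pick compatible colors. Translating back, this forces $c\in L(a_1)\cap L(a_3)$, $L(a_1)\ne L(a_3)$, and $L(a_2)$ to equal the three-color set $L(a_1)\cup L(a_3)$. In particular the problematic $c$ is forced to be the unique element of $L(a_1)\cap L(a_3)$ (an intersection of two $2$-sets in that configuration has size one), so each $P_3$ is bad for at most one color $c$.

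The only possible difficulty is the residual $P_3$-coloring analysis in the previous paragraph, but it is a small finite check and reduces cleanly to that single forbidden configuration. With at most two bad colors total (one per $P_3$) and at least three candidate colors in $L(x_1)\cap L(x_2)$, a good $c$ always exists, so no bad $L$ can exist.
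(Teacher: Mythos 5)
Your proof is correct and follows essentially the same route as the paper: both invoke the Small Pot Lemma to get $\card{Pot(L)}\le 7$, deduce three common colors in $L(x_1)\cap L(x_2)$, and then argue that one of them extends to the two $P_3$'s. Your endgame is slightly sharper---you characterize exactly when a color $c$ fails on a $P_3$ (forcing $\{c\}=L(a_1)\cap L(a_3)$, so each $P_3$ rules out at most one of the three candidates), whereas the paper handles the case where all three colors lie deep in the $P_3$ lists by reversing strategy and coloring the $2P_3$ first with at most four colors before finishing on the $E_2$.
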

\begin{proof}
Suppose otherwise. Let the $E_2$ have vertices $x_1$ and $x_2$ and the two
$P_3$s have vertices $y_1, y_2, y_3$ and $y_4, y_5, y_6$.  By the Small Pot
Lemma, we have a bad $d_1$-assignment on $\join{E_2}{2P_3}$ with
$\card{Pot(L)} \leq 7$. Since $\card{L(x_1)} + \card{L(x_2)} = 10 \geq
\card{Pot(L)} + 3$, we have three different colors $c_1, c_2, c_3 \in L(x_1)
\cap L(x_2)$.  Coloring both $x_1$ and $x_2$ with any $c_i$ leaves at worst a
$d_0$-assignment on the $2P_3$.  If $c_i \not \in L(y_1) \cap L(y_2) \cap
L(y_3)$ and $c_i \not \in L(y_4) \cap L(y_5) \cap L(y_6)$ for some $i$, then we
can complete the coloring.  Thus, without loss of generality, we have
$\set{c_1, c_2} \subseteq  L(y_1) \cap L(y_2) \cap L(y_3)$ and $c_3 \in
L(y_4) \cap L(y_5) \cap L(y_6)$.  Color $y_1$ and $y_3$ with $c_1$ and $y_4$
and $y_6$ with $c_3$.  Then we can easily complete the coloring on the rest of
the $2P_3$.  We have used at most $4$ colors on the $2P_3$ and hence we can
complete the coloring.
\end{proof}

\begin{lem}\label{E2Classification}
$\join{E_2}{B}$ is $d_1$-choosable if
$B$ is not the disjoint union of complete subgraphs and at most one $P_3$.  
\end{lem}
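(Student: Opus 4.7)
The plan is to deduce the lemma from Lemma~\ref{ConnectedIncompleteAtLeast4} and Lemma~\ref{E2Join2P3} by a direct reduction: given any $d_1$-assignment $L$ on $G\DefinedAs \join{E_2}{B}$, I would greedily pre-color a carefully chosen union $H$ of whole components of $B$, then invoke one of those two lemmas on $\join{E_2}{H}$.

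First I would dispose of the easy cases. Every $B$ with $\card{B}\le 3$ is one of $K_1$, $K_2$, $E_2$, $K_3$, $P_3$, $K_2+K_1$, or $E_3$, and each is a disjoint union of cliques together with at most one $P_3$; thus the hypothesis forces $\card{B}\ge 4$. If $B$ is connected, then $B$ is not complete (otherwise it is a clique and lies in the excluded family), and so Lemma~\ref{ConnectedIncompleteAtLeast4} applies to $\join{B}{E_2}=\join{E_2}{B}$ directly. Hence assume $B$ is disconnected.

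Because $B$ is not a disjoint union of cliques together with at most one $P_3$, one of two cases occurs: (i) some component $B_1$ of $B$ is neither complete nor a $P_3$, and hence $\card{B_1}\ge 4$ (the only connected non-complete graph on at most three vertices is $P_3$); or (ii) every non-clique component of $B$ is a $P_3$ and at least two such components occur. In case (i) I set $H\DefinedAs B_1$; in case (ii) I let $H$ be the union of two $P_3$-components, so $H\cong 2P_3$. In either case $H$ is a union of whole components of $B$, so no vertex of $V(B)-V(H)$ is adjacent in $B$ (hence in $G$) to any vertex of $V(H)$. For every $v\in V(B)-V(H)$ this gives $\card{L(v)}=d_G(v)-1=d_B(v)+1=d_{B-H}(v)+1$, strictly more than the local degree, so $V(B)-V(H)$ can be colored greedily in any order. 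The resulting partial coloring leaves the lists on $V(H)$ unchanged and shrinks each $\card{L(x_i)}$ by at most $\card{B}-\card{H}$, so $\card{L(x_i)}\ge \card{H}-1$; the remaining list assignment on $V(\join{E_2}{H})$ is therefore at least a $d_1$-assignment for $\join{E_2}{H}$. Lemma~\ref{ConnectedIncompleteAtLeast4} closes case (i) and Lemma~\ref{E2Join2P3} closes case (ii).

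The central step is the structural dichotomy that selects $H$: recognizing that the precise negation of ``disjoint union of cliques and at most one $P_3$'' always yields either a connected incomplete component of order at least four or two disjoint $P_3$ components, each sitting in $B$ with no neighbors outside itself. Once this is in hand, the greedy pre-coloring and subsequent appeal to the known lemmas are routine, and in particular no list-shrinking or Small Pot argument is needed for this lemma.
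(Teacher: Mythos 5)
Your proof is correct and follows essentially the same route as the paper: both reduce to Lemma~\ref{ConnectedIncompleteAtLeast4} (for an incomplete component on at least four vertices) and Lemma~\ref{E2Join2P3} (for two $P_3$ components). The only difference is that you make explicit the greedy pre-coloring of the components outside $H$, a step the paper's proof leaves implicit; your justification of that reduction is valid.
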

\begin{proof}
Suppose we have $B$ such that $\join{E_2}{B}$ is not $d_1$-choosable. By Lemma
\ref{E2Join2P3}, $B$ has at most one incomplete component.  Suppose we have an
incomplete component $C$ and let $y_1y_2y_3$ be an induced $P_3$ in $C$.  
If $C \neq P_3$, then $\card{C} \geq 4$ and Lemma \ref{ConnectedIncompleteAtLeast4} gives a contradiction.  Hence $C = P_3$.
\end{proof}

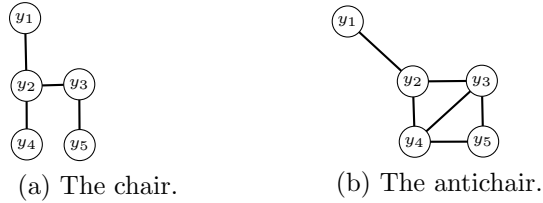
\begin{figure}[htb]
\centering
\subfloat[The chair.]{\label{fig:chair}
{\parbox{2.5cm}{
\begin{tikzpicture}[scale = 10]
\tikzstyle{VertexStyle}=[shape = circle,	
								 minimum size = 1pt,
								 inner sep = 1.2pt,
                         draw]
\Vertex[x = 0.236114323139191, y = 0.890571415424347, L = \tiny {$y_1$}]{v0}
\Vertex[x = 0.238114297389984, y = 0.800571441650391, L = \tiny {$y_2$}]{v1}
\Vertex[x = 0.308114320039749, y = 0.802571445703506, L = \tiny {$y_3$}]{v2}
\Vertex[x = 0.238400012254715, y = 0.722571432590485, L = \tiny {$y_4$}]{v3}
\Vertex[x = 0.308685749769211, y = 0.721714287996292, L = \tiny {$y_5$}]{v4}
\Edge[](v1)(v0)
\Edge[](v1)(v3)
\Edge[](v1)(v2)
\Edge[](v4)(v2)
\end{tikzpicture}}}}\qquad\qquad
\subfloat[The antichair.]{\label{fig:antichair}
{\parbox{3cm}{
\begin{tikzpicture}[scale = 10]
\tikzstyle{VertexStyle}=[shape = circle,	
								 minimum size = 1pt,
								 inner sep = 1.2pt,
                         draw]
\Vertex[x = 0.151542887091637, y = 0.880285702645779, L = \tiny {$y_1$}]{v0}
\Vertex[x = 0.238114297389984, y = 0.800571441650391, L = \tiny {$y_2$}]{v1}
\Vertex[x = 0.32982861995697, y = 0.801428586244583, L = \tiny {$y_3$}]{v2}
\Vertex[x = 0.240685701370239, y = 0.720285713672638, L = \tiny {$y_4$}]{v3}
\Vertex[x = 0.331542909145355, y = 0.720571428537369, L = \tiny {$y_5$}]{v4}
\Edge[](v1)(v0)
\Edge[](v1)(v3)
\Edge[](v1)(v2)
\Edge[](v4)(v2)
\Edge[](v4)(v3)
\Edge[](v3)(v2)
\end{tikzpicture}}}}
\caption{Labelings of the chair and the antichair.}
\label{fig:chair_antichair}
\end{figure}

\begin{lem}\label{MinimalK2}
$\join{K_2}{C_4}$ and $\join{K_2}{\text{antichair}}$ are $d_1$-choosable.
\end{lem}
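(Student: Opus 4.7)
The plan is to apply the Small Pot Lemma to a bad $d_1$-assignment $L$ with minimum pot size in each case, and then to complete the coloring via pigeonhole on nonadjacent pairs, in the spirit of Lemmas~\ref{AJoinP_4} and~\ref{AntiPaw}.

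For $\join{K_2}{C_4}$, label the $C_4$ cyclically as $y_1y_2y_3y_4$ and let $x_1,x_2$ be the $K_2$.  Then $|L(y_i)|=3$, $|L(x_j)|=4$, and the Small Pot Lemma gives $|Pot(L)|\le 5$.  Since $|L(y_1)|+|L(y_3)|=|L(y_2)|+|L(y_4)|=6>|Pot(L)|$, each opposite pair has a common color.  If we can choose distinct common colors $c_1\in L(y_1)\cap L(y_3)$ and $c_2\in L(y_2)\cap L(y_4)$, color $y_1=y_3=c_1$, $y_2=y_4=c_2$, and $2$-color $x_1x_2$ from residual lists of size $\ge 2$.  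Otherwise $L(y_1)\cap L(y_3)=L(y_2)\cap L(y_4)=\{c\}$, forcing $|Pot(L)|=5$ and making $L(y_2)\setminus\{c\}$ and $L(y_4)\setminus\{c\}$ disjoint $2$-subsets of $Pot(L)\setminus\{c\}$.  Coloring $y_1=y_3=c$ and picking $y_2,y_4$ from these disjoint $2$-sets gives automatically distinct colors; the only subtle subcase is $L(x_1)=L(x_2)$, where we use the freedom in choosing $y_2,y_4$ to place the unique color missing from $L(x_1)$ among $\{c,y_2,y_4\}$, leaving residual lists of size $\ge 2$ on $\{x_1,x_2\}$ from which the edge is easily 2-colored.

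For $\join{K_2}{\text{antichair}}$, label as in Figure~\ref{fig:antichair}.  A bad $d_1$-assignment satisfies $|L(y_1)|=2$, $|L(y_2)|=|L(y_3)|=|L(y_4)|=4$, $|L(y_5)|=3$, $|L(x_i)|=5$, and $|Pot(L)|\le 6$ by the Small Pot Lemma.  The nonadjacent pair $\{y_2,y_5\}$ satisfies $|L(y_2)|+|L(y_5)|=7>|Pot(L)|$, so we pick $c\in L(y_2)\cap L(y_5)$ and color $y_2=y_5=c$.  The residual graph on $\{y_1,y_3,y_4,x_1,x_2\}$ is $K_5$ with the edges $y_1y_3,y_1y_4$ deleted, and residual list sizes are $|L'(y_1)|\ge 1$, $|L'(y_3)|,|L'(y_4)|\ge 3$, and $|L'(x_i)|\ge 4$.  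We color $y_1$ from $L'(y_1)$ and then color the remaining $K_4$ on $\{y_3,y_4,x_1,x_2\}$ via Hall's theorem, which succeeds as long as the residual $K_4$ lists together span at least four colors.

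The main obstacle is the boundary subcase where $c\in L(y_1)$ (forcing $y_1$ to its other color $b$) and the four residual $K_4$ lists all collapse to a common $3$-set $S$; a direct analysis shows this forces $L(y_3)=L(y_4)=S\cup\{c\}$ and $L(x_1)=L(x_2)=S\cup\{b,c\}$.  To escape, we exploit the remaining freedom.  When $|Pot(L)|\le 5$ we have $|L(y_2)\cap L(y_5)|\ge 2$, and a different choice of $c$ breaks the collapse.  When $|Pot(L)|=6$, the extra pot element $e$ lies in $L(y_2)\cup L(y_5)$ (since $L(y_1),L(y_3),L(y_4),L(x_i)$ all omit it); we then reinitialize by coloring $y_2=e$ (assuming $e\in L(y_2)$, with the symmetric argument if $e\in L(y_5)$) and $y_5=c$, after which $y_1=c$ is feasible (since $y_1$ is nonadjacent to $y_5$) and the residual $K_4$ has lists $S,S,S\cup\{b\},S\cup\{b\}$, which Hall's theorem dispatches using the four colors $S\cup\{b\}$.
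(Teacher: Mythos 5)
Your argument for $\join{K_2}{C_4}$ is correct, though more laborious than necessary: the paper simply writes $\join{K_2}{C_4}=\join{\parens{\join{E_2}{K_2}}}{E_2}$ and invokes Lemma~\ref{ConnectedIncompleteAtLeast4}, since $\join{E_2}{K_2}$ is connected, incomplete, and has four vertices. The problem is in the antichair case, where the $\card{Pot(L)}=6$ collapse sub-case contains a genuine gap: the ``symmetric argument'' you invoke when $e\in L(y_5)\setminus L(y_2)$ does not exist, because $y_2$ and $y_5$ are not symmetric in the antichair---$y_2$ is adjacent to $y_1$ but $y_5$ is not. Concretely, in that sub-case your recipe becomes ``color $y_5=e$ and $y_2=c$,'' which forces $y_1=b$ (since $y_1\sim y_2$ and $L(y_1)=\set{b,c}$); then each $x_i$ loses both $b$ and $c$, each of $y_3,y_4$ loses $c$, and all four residual lists on the $K_4$ collapse to $S$ again, so Hall's condition fails. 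The feasibility of $y_1=c$, which your argument leans on explicitly via the parenthetical ``since $y_1$ is nonadjacent to $y_5$,'' is exactly what is lost under the swap. The sub-case can be repaired---for instance, color $y_5=e$, color $y_2$ with any $\alpha\in L(y_2)\setminus\set{c}$, and then set $y_1=c$; one checks that the resulting $K_4$ lists span at least four colors whether $\alpha\in S$ or $\alpha=b$---but that is a different argument, not a symmetric instance of the one you gave.

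More broadly, you would have avoided this entirely by following the paper's route: after observing that $y_2$ and $y_5$ share a color and that coloring them alike extends greedily to the rest of the antichair (so the antichair is colorable from $L$ with at most four colors), Lemma~\ref{ConnectedPot} forces $\card{Pot(L)}\le 5$. Then $\card{L(y_2)\cap L(y_5)}\ge 2$ and $L(y_1)\cap L(y_3)\ne\emptyset$, so the disjoint nonadjacent pairs $\set{y_2,y_5}$ and $\set{y_1,y_3}$ can be given distinct colors and the remaining triangle $\set{y_4,x_1,x_2}$ colored greedily. Your residual-$K_4$ case analysis is precisely the work that the pot-shrinking lemma is designed to eliminate. (Also note that your claims that the collapse requires $c\in L(y_1)$ and that ``a different choice of $c$ breaks the collapse'' when $\card{Pot(L)}\le 5$ are both true but are asserted without justification; each needs a short argument.)
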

\begin{proof}
$\join{K_2}{C_4} = \join{\join{E_2}{E_2}}{K_2}$ which is $d_1$-choosable by Lemma \ref{ConnectedIncompleteAtLeast4}.

Suppose $G \DefinedAs \join{K_2}{\text{antichair}}$ is not $d_1$-choosable. We use the labeling of the antichair given in Figure
\ref{fig:antichair}.  Let $L$ be a bad $d_1$-assignment on $G$ with $|Pot(L)|$ minimum.  By the Small Pot Lemma, we have $\card{Pot(L)} \le 6$.  
We have $\card{L(y_2)} + \card{L(y_5)} = d(y_2) + d(y_5) - 2 = 5 + 4 - 2 = 7 > \card{Pot(L)}$.  Hence we may color $y_2$ and $y_5$ the same and finish coloring the antichair greedily.  
By Lemma \ref{ConnectedPot}, we conclude $\card{Pot(L)} \le 5$.  Now $\card{L(y_1)} + \card{L(y_3)} = d(y_1) + d(y_3) - 2 = 3 + 5 - 2 = 6 > \card{Pot(L)}$.  Since $y_2$ and $y_5$ have at least two colors in common
and $y_1$ and $y_3$ have at least one color in common, we can color the pairs with different colors and complete the coloring greedily to all of $G$, a contradiction.
\end{proof}

\section*{Acknowledgments}
We thank the two anonymous referees, whose numerous suggestions greatly improved
the presentation of our results.
\bibliographystyle{amsplain}
\bibliography{GraphColoring}
\end{document}